%

\documentclass[12pt]{amsart}

\usepackage{amsthm,amsmath,amsfonts,amssymb}



\theoremstyle{plain}
\newtheorem{thm}{Theorem}[section]
\newtheorem{lemma}[thm]{Lemma}
\newtheorem{cor}[thm]{Corollary}
\newtheorem{prop}[thm]{Proposition}

\newtheorem{CorThm}{Theorem}

\newtheorem*{thm*}{Theorem}
\newtheorem*{prop*}{Proposition}
\newtheorem*{lemma*}{Lemma}

\theoremstyle{remark}
\newtheorem{rmk}[thm]{Remark}
\newtheorem{example}[thm]{Example}

\theoremstyle{definition}
\newtheorem{defn}[thm]{Definition}

\numberwithin{equation}{section}

\def\E{\mathbb{E}}

\def\R{\mathbb{R}}
\def\N{\mathbb{N}}

\def\cJ{\mathcal{J}}

\def\cX{\mathcal{X}}

\DeclareMathOperator{\Id}{Id}

\DeclareMathOperator{\proj}{proj}

\DeclareMathOperator*{\argmin}{argmin}

\newenvironment{PfofEntropyEquality}[1]
{\par\vskip2\parsep\noindent{\sc Proof of Theorem\ \ref{Thm:EntropyEquality}. }}{{\hfill
$\Box$}
\par\vskip2\parsep}

\newenvironment{PfofThmFurstenberg}[1]
{\par\vskip2\parsep\noindent{\sc Proof of Theorem\ \ref{Thm:Furstenberg}. }}{{\hfill
$\Box$}
\par\vskip2\parsep}

\newenvironment{PfofThmGenObsModelConsistency}[1]
{\par\vskip2\parsep\noindent{\sc Proof of Theorem\ \ref{Thm:GenObsModelConsistency}. }}{{\hfill
$\Box$}
\par\vskip2\parsep}

\newenvironment{PfofThm_PropSPEC}[1]
{\par\vskip2\parsep\noindent{\sc Proof of Theorem\ \ref{Thm:SPEC}. }}{{\hfill
$\Box$}
\par\vskip2\parsep}

\newenvironment{PfofThm_LS}[1]
{\par\vskip2\parsep\noindent{\sc Proof of Theorem\ \ref{Thm:LSConsistency}. }}{{\hfill
$\Box$}
\par\vskip2\parsep}

\newenvironment{PfofCorRd}[1]
{\par\vskip2\parsep\noindent{\sc Proof of Corollary\ \ref{Cor:Rd}. }}{{\hfill
$\Box$}
\par\vskip2\parsep}

\newenvironment{PfofPropNegative}[1]
{\par\vskip2\parsep\noindent{\sc Proof of Proposition\ \ref{Prop:Negative}. }}{{\hfill
$\Box$}
\par\vskip2\parsep}


\title[Risk minimization and complexity of dynamical models]{Empirical risk minimization and complexity of dynamical models}

\begin{document}

\author{Kevin McGoff and Andrew B. Nobel}
\address{Kevin McGoff\\
Department of Mathematics\\
University of North Carolina at Charlotte\\
Charlotte, NC 28223}
\email{kmcgoff1@uncc.edu}
\urladdr{https://clas-math.uncc.edu/kevin-mcgoff/}
\address{Andrew B. Nobel\\
Department of Statistics and Operations Research\\
University of North Carolina at Chapel Hill\\
308 Hanes Building\\
Chapel Hill, NC 27599}
\email{nobel@email.unc.edu}
\thanks{KM acknowledges the support of NSF grant DMS-1613261. AN acknowledges the support of NSF grants DMS-1613072, DMS-1310002, and DMS-1613261.}

%
%
%
%
%
%

\begin{abstract}
A dynamical model consists of a continuous self-map $T: \cX \to \cX$ of a compact state space $\cX$ 
and a continuous observation function $f: \cX \to \R$.  
This paper considers the fitting of a parametrized family of dynamical models to an observed real-valued 
stochastic process using empirical risk minimization.
The limiting behavior of the minimum risk parameters is studied in a general setting.
We establish a general convergence theorem for minimum risk estimators and ergodic observations.
We then study conditions under which empirical risk minimization can effectively
separate the signal from the noise in an additive observational noise model.  
The key, necessary condition in the latter results is that the family
of dynamical models has limited complexity, which is quantified through a notion of entropy for 
families of infinite sequences.  
Close connections between entropy and limiting average
mean widths for stationary processes are established.
\end{abstract}

\keywords{empirical risk minimization, dynamical models, joinings, topological entropy}
\renewcommand{\subjclassname}{MSC 2010}
\subjclass[2010]{Primary: 62M09}
%
%


\maketitle

\section{Introduction} \label{Sect:Intro}

Empirical risk minimization is a common approach to model fitting and estimation in a variety of parametric and non-parametric problems.  
In this paper we investigate the use of empirical risk minimization to fit a family of dynamical models to an observed stochastic process.
Formally, a dynamical model consists of a continuous transformation $T : \cX \to \cX$ on a compact metric space $\cX$,
and a continuous observation function $f : \cX \to \R$.  
Let $T^k$ denote the $k$-fold composition of $T$ with itself, and let $T^0$ 
be the identity map on $\cX$. 
From each initial state
$x \in \cX$ the dynamical model $(T,f)$ yields a real-valued sequence
\[
\bigl\{ f(T^k x) \, = \, f \circ T^k (x) : k \geq 0 \bigr\} \, \subseteq \, \R, 
\]
obtained by applying the observation function $f$ to a deterministic sequence of states
generated by repeated iteration of the transformation $T$.  In general, $f$ need 
not be injective, so one cannot necessarily recover the underlying states from the sequence $\{f(T^k x)\}_k$.

In what follows we consider an indexed family $\mathcal{D} = \{ (T_\theta, f_\theta) : \theta \in \Theta \}$
of dynamical models defined on a common compact metric space $\cX$ satisfying the following conditions:
\begin{enumerate}

\item[(D1)]
the index set $\Theta$ is a compact metric space;

\item[(D2)]
the map $(\theta,x) \mapsto T_{\theta}(x)$  from $\Theta \times \cX$ to $\cX$ is continuous;

\item[(D3)]
the map  $(\theta,x) \mapsto f_{\theta}(x)$ from $\Theta \times \cX$ to $\R$ is continuous.

\end{enumerate}
Condition (D2) ensures that each transformation $T_\theta$ is continuous and that the action
of $T_\theta$ is continuous in $\theta$.
Condition (D3) ensures that each observation function $f_\theta$ is continuous and that observations
vary continuously with $\theta$.  In particular, there exists a constant $K_\mathcal{D} > 0$ 
such that $|f_\theta(x)| \leq K_\mathcal{D}$
for every $x \in \cX$ and $\theta \in \Theta$.
Examples of families of systems satisfying these conditions are given in Section \ref{Sect:PositiveExamples}.

By definition, dynamical models are deterministic, as the sequence of observations generated by a model is fully determined once the initial condition is given.  
However, as noted in Section \ref{Sect:AssociatedProcesses}, each dynamical model has a set of invariant measures and these measures give rise to a family of stationary processes.
In the large sample limit, fitting a family of dynamical models leads directly to a variational problem 
involving its associated processes.  Conditions (D1)-(D3) ensure that the set of associated processes 
is non-empty and that the limiting variational problem is well-defined. 
We make no explicit assumptions about the invariant measures of any individual model.

In the context of this paper, dynamical models represent low order regularities of potential interest, 
such as periodicity, multi-periodicity, constrained growth behavior, and hierarchical structure.
Fitting a family of such models to an observed stochastic process is a means of identifying select
underlying regularities in the observed process, which may be subject to noise.
While the observed process is likely to be complex, our primary focus is on model families $\mathcal{D}$ 
having limited complexity, quantified through the condition that the entropy $h(\mathcal{D})$ of the
family is zero.  The entropy of a family of models is defined in Section \ref{Sect:EntropyDef}.

\subsection{Minimum Risk Fitting of Dynamical Models}

Let $\mathcal{D}$ be a family of dynamical models that capture some behavior of interest, and
let $\mathbf{Y} = Y_0, Y_1, \ldots \in \R$ be an observed stationary ergodic process.  
Suppose that we wish to identify regularities in $\mathbf{Y}$ by fitting the observed values 
of the process with models in $\mathcal{D}$.  
We do not assume that the observed process $\mathbf{Y}$ is 
generated by a process in $\mathcal{D}$. 
Let $\ell: \R \times \R \to \R$ be a nonnegative loss function 
that is jointly lower semicontinuous in its arguments. We require the following integrability condition:
\begin{equation} \label{eq:loss-integ}
\tag{C1} \E \biggl[ \sup_{|u| \leq K_\mathcal{D}} \ell \bigl( u,Y_0 \bigr) \biggr] \ < \ \infty . 
\end{equation}
(If the supremum in (\ref{eq:loss-integ}) 
is not measurable, then one may replace the
expectation by an outer expectation.)  
For each $n \geq 0$, $\theta \in \Theta$, and $x \in \cX$ define 
\[
R_n (\theta : x) \ = \ 
\frac{1}{n} \, \sum_{k = 0}^{n-1} \ell( f_{\theta} \circ T_{\theta}^k(x),Y_k \bigr)
\]
to be the empirical risk of the model $(T_{\theta}, f_{\theta})$ with initial state $x$ relative to the
first $n$ observations of $\mathbf{Y}$.
We formalize empirical risk minimization as follows.

\vskip.1in

\begin{defn}
A sequence of measurable functions $\theta_n: \R^n \to \Theta$, $n \geq 1$, will be called 
(empirical) minimum risk estimates for $\mathcal{D}$ if 
\begin{equation}
\label{LSES}
\lim_n \, \inf_{x} R_n (\hat{\theta}_n : x) \ = \ \lim_n \, \inf_{\theta} \, \inf_{x} R_n (\theta : x)
\ \ \ \mbox{w.p.1},
\end{equation}
where $\hat{\theta}_n : =\theta_n(Y_0,\ldots, Y_{n-1})$.
\end{defn}

\begin{rmk}
Existence of the limit on the right hand side of (\ref{LSES}) follows from Kingman's subadditive 
ergodic theorem (under (\ref{eq:loss-integ})). 
Existence of the limit on the left hand side of (\ref{LSES}) is part of the definition.  
\end{rmk}

\begin{rmk}
The definition of minimum risk estimates is a way to formalize empirical risk minimization in the context of fitting dynamical models. Note that the definition does not require exact minimization for each $n$: it only requires that the average loss is asymptotically minimized by the sequence of estimates. Thus, our results apply to any sequence of approximate minimizers satisfying this condition. This generalization is important, as approximate versions of these procedures are used in practice when fitting dynamical systems to observations.
\end{rmk}

The primary goal of this paper is to investigate and characterize the limiting behavior of minimum risk estimates. 
Three main results are presented, corresponding to three levels of generality.
At the highest level, 
we provide a variational characterization of the limiting behavior of minimum risk estimates 
(Theorem \ref{Thm:GEN}).
We then focus on a natural signal plus noise setting and show that if the family of dynamical models 
has low complexity then empirical risk minimization effectively separates the signal and the noise 
(Theorem \ref{Thm:GenObsModelConsistency}).
In the low complexity setting, we show that 
if the signal arises from a model in the family and the noise is appropriately centered with 
respect to the loss, then the minimum risk estimates are consistent (Theorem \ref{Thm:SPEC}).
A negative result (Proposition \ref{Prop:Negative}) shows that empirical risk minimization can 
be inconsistent when the family of dynamical models has high complexity.

Beyond the results above, the main contribution of our work is a systematic 
treatment of identifiability and complexity for families
of dynamical models, with a focus on the misspecified case in which the observed process is 
not generated by a model in the family.  We avoid
assumptions on the invariant measures of the models in $\mathcal{D}$ by working with the 
processes they generate.  In particular,
the limit set of empirical risk estimators is characterized in terms of a distortion-based 
projection of the observed process onto the family of processes associated with $\mathcal{D}$.
In addition, we introduce an entropy-based definition of complexity for families of dynamical models 
and families of infinite sequences 
that may be of independent interest.
Our notion of entropy has close connections with topological entropy, studied in dynamical systems, and 
with stochastic mean widths, studied in empirical process theory.

Both the statements and proofs of our results rely on the concept of joinings, which are stationary couplings of stochastic processes. 
Joinings, introduced by Furstenberg \cite{Furstenberg1967}, have been well-studied in ergodic theory, but have not been 
widely applied to problems of statistical inference. 
Our results show that joinings are intimately connected with minimum risk fitting of dynamical models.
Several tools from the theory of joinings, including disjointness and relatively independent joinings, 
play an important role in our analysis.

\section{Definitions and first results}

In this section we introduce some concepts and notation that will be useful in what follows. We also state our first convergence result, Theorem \ref{Thm:GEN}.

\subsection{Processes associated with dynamical models} \label{Sect:AssociatedProcesses}

Let $(T,f)$ be a dynamical model on a compact metrizable state space $\cX$.
Recall that a Borel probability measure $\mu$ on $\cX$ is said to be 
\textit{invariant} under $T$ if $\mu(T^{-1}A) = \mu(A)$ for all Borel 
sets $A \subseteq \cX$. 
Let $\mathcal{M}(\cX,T)$ be the set of Borel measures on $\cX$ that are 
invariant under $T$, which is nonempty (see \cite[p.152]{Walters1982}). 
To each measure $\mu \in \mathcal{M}(\cX,T)$ there is an associated 
real-valued process
\[
\mathbf{U} = f(X), f(T X),  f(T^2 X), \ldots 
\] 
where $X \in \cX$ has distribution $\mu$.  The invariance of $\mu$ under $T$ ensures that 
$\mathbf{U}$ is stationary.  Here and in what follows we will regard real-valued processes as 
measures on the infinite product space $\R^\N$ equipped with its Borel sigma-field in the standard
product topology.  

\begin{defn}
Let $\mathcal{D} = \{ (T_\theta,f_\theta) : \theta \in \Theta \}$ be a family of dynamical models.
For each $\theta \in \Theta$ let
\[
\mathcal{Q}_\theta 
\ = \  
\Bigl\{ \mathbf{U} = (f_{\theta} \circ T_{\theta}^k(X))_{k \geq 0} : X \sim \mu \mbox{ with } \mu \in \mathcal{M}(\cX,T_\theta) \Bigr\}
\]
be the set of stationary processes associated with $(T_\theta,f_\theta)$, and let 
$\mathcal{Q}_{\mathcal{D}} = \bigcup_{\theta \in \Theta} \mathcal{Q}_{\theta}$ be the set of processes 
associated with the entire family of models $\mathcal{D}$. 
\end{defn}

\subsection{Joinings and distortion for stationary processes}

The statements and proofs of our principal results rely critically 
on stationary couplings of stationary processes, which are
known as joinings.

\begin{defn} \label{Defn:Joinings}
A \textit{joining} of two stationary processes 
$\mathbf{U} = \{ U_k : k \geq 0 \}$ and 
$\mathbf{V} = \{ V_k : k \geq 0 \}$ 
is a stationary process 
$\mathbf{W} = \{ (\tilde{U}_k, \tilde{V}_k) : k \geq 0 \}$
such that $\tilde{\mathbf{U}}= \{ \tilde{U}_k : k \geq 0 \}$ has the same distribution as $\mathbf{U}$ 
and $\tilde{\mathbf{V}} = \{ \tilde{V}_k : k \geq 0 \}$ has the same distribution as $\mathbf{V}$.
Let $\cJ(\mathbf{U},\mathbf{V})$
denote the family of all joinings of $\mathbf{U}$ and $\mathbf{V}$.
\end{defn}

By definition, a joining of two stationary processes is a coupling of the processes 
that is itself stationary.  
Note that the family $\cJ(\mathbf{U},\mathbf{V})$ always contains the
the so-called independent joining under which $\tilde{\mathbf{U}}$ and $\tilde{\mathbf{V}}$ are independent
copies of $\mathbf{U}$ and $\mathbf{V}$, respectively.
Joinings were introduced by Furstenberg \cite{Furstenberg1967}, and have been widely studied
in ergodic theory \cite{Rue2006,Glasner2003}.  
For notational convenience, we will frequently use  $[\mathbf{U},\mathbf{V}]$ to denote a joining of $\mathbf{U}$ with $\mathbf{V}$.
Also, note that the joining of three or more stationary processes may be defined analogously.

\vskip.1in

\begin{defn}
Let $L: \R \times \R \to \R$ be a nonnegative loss function. 
The $L$-distortion between two stationary processes $\mathbf{U}$ and $\mathbf{V}$ 
is given by
\begin{equation*}
\gamma_L (\mathbf{U},\mathbf{V}) 
\ = \, 
\inf_{\cJ(\mathbf{U},\mathbf{V})} \,
\mathbb{E} \bigl[ L ( U_0, V_0 ) \bigr].
\end{equation*}
\end{defn}

\begin{rmk}
Joinings were used by Ornstein \cite{Ornstein1970,Ornstein1973,Ornstein1974} to define the $\overline{d}$-distance between
finite alphabet stationary processes based on the Hamming metric 
$\mathbb{I}(U_0 \neq V_0)$.  
The $\overline{d}$-distance was extended by Gray et al.\ \cite{Gray1975} to 
stationary processes with general alphabets and to arbitrary metrics 
$\rho(U_0,V_0)$.  The distortion $\gamma_L(\cdot,\cdot)$ is a straightforward
generalization of these distances to nonnegative loss functions $L(\cdot,\cdot)$ that need not be metrics on $\R$.
\end{rmk}

\begin{rmk}
The fact that the infimum defining $\gamma_L(\cdot,\cdot)$ runs over the set of joinings, 
rather than the set of couplings, is critical. 
A minimizing joining makes the average loss between elements of the process as small as 
possible over the entire future. By contrast, a minimizing coupling would make the processes 
as close as possible at time zero, without regard to their behavior in the future.
Moreover, ergodic properties of the processes can severely constrain the set of possible joinings. 
For instance, for many pairs of processes $\mathbf{U}$ and $\mathbf{V}$, the only joining in 
$\mathcal{J}(\mathbf{U},\mathbf{V})$ is the independent joining. Such processes are called disjoint.
\end{rmk}

\subsection{Convergence of Minimum Risk Estimates} \label{Sect:ConvergenceThm}

As noted above, a family $\mathcal{D}$ of dynamical models corresponds to a family 
$\mathcal{Q}_\mathcal{D} = \bigcup_{\theta \in \Theta} \mathcal{Q}_\theta$ of stationary 
processes.  The problem of fitting models in $\mathcal{D}$ to an observed ergodic 
process $\mathbf{Y}$ using the loss $\ell( \cdot, \cdot)$ has a population analog in which we seek
processes in $\mathcal{Q}_\mathcal{D}$ that minimize the distortion $\gamma_\ell(\cdot, \cdot)$
with $\mathbf{Y}$.  The solution to the population problem is the $\gamma_\ell$-projection 
of $\mathbf{Y}$ onto $\mathcal{Q}_\mathcal{D}$, and the corresponding set of parameters
is a natural limit set for empirical risk estimators.
This leads to the following definition, which is given for general loss functions.

\begin{defn}
Let $\mathcal{D}$ be a family of dynamical models parametrized by $\theta \in \Theta$.
Given a nonnegative loss function $L: \R \times \R \to \R$ and a stationary ergodic process $\mathbf{Y}$, 
let
\[
\Theta_L(\mathbf{Y}) 
\ = \ 
\argmin_{\theta \in \Theta} \min_{\mathbf{U} \in \mathcal{Q}_\theta} \gamma_L (\mathbf{U}, \mathbf{Y}),
\] 
which is the set of parameters $\theta$ such that some process in $\mathcal{Q}_\theta$ minimizes the
distortion with $\mathbf{Y}$.

\end{defn}

The proof of the following theorem, 
which relies on results of McGoff and Nobel \cite{McGoffNobel}, is presented in 
Section \ref{Sect:Connections}.

\begin{thm}
\label{Thm:GEN}
Let $\mathcal{D}$ be a family of dynamical models satisfying (D1)-(D3), and let  
$\ell(\cdot,\cdot)$ be a lower semicontinuous loss function.
If $\mathbf{Y}$ is a stationary ergodic process satisfying (\ref{eq:loss-integ}),
then $\Theta_\ell(\mathbf{Y})$ is non-empty and compact and
\vskip.1in
\begin{enumerate}

\item
any sequence $\{ \hat{\theta}_n = \theta_n (Y_0,\ldots,Y_{n-1}) \}$
of minimum risk estimators converges almost surely to $\Theta_\ell(\mathbf{Y})$;

\vskip.1in

\item
for each $\theta \in \Theta_\ell (\mathbf{Y})$, 
there exists a sequence of minimum risk estimators that converges almost surely to $\theta$.

\end{enumerate}
\end{thm}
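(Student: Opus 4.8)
The plan is to reduce the parametrized problem to a single dynamical model and then import the variational machinery of McGoff and Nobel. First I would form the \emph{total model} $(\hat T,\hat f)$ on the compact space $\hat\cX=\Theta\times\cX$ by setting $\hat T(\theta,x)=(\theta,T_\theta x)$ and $\hat f(\theta,x)=f_\theta(x)$; conditions (D2)--(D3) make $\hat T$ and $\hat f$ continuous, and since $\hat T$ fixes the $\Theta$-coordinate one has $\hat f\circ\hat T^k(\theta,x)=f_\theta\circ T_\theta^k(x)$, so that $\inf_\theta\inf_x R_n(\theta:x)$ is exactly the minimal empirical risk for the single model $(\hat T,\hat f)$. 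Because the coordinate $\theta$ is a $\hat T$-invariant function, the ergodic invariant measures of $\hat T$ are supported on single fibers $\{\theta\}\times\cX$, so the associated family $\mathcal{Q}_{\hat\cX}$ consists of the stationary mixtures of processes drawn from $\bigcup_\theta\mathcal{Q}_\theta$. Writing $\gamma^\ast=\min_{\theta}\min_{\mathbf{U}\in\mathcal{Q}_\theta}\gamma_\ell(\mathbf{U},\mathbf{Y})$, I would check that $\min_{\mathbf{U}\in\mathcal{Q}_{\hat\cX}}\gamma_\ell(\mathbf{U},\mathbf{Y})=\gamma^\ast$: the inequality $\le$ is immediate, and for $\ge$ one decomposes an arbitrary joining $[\mathbf{U},\mathbf{Y}]$ into ergodic components. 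Because $\mathbf{Y}$ is ergodic, each component retains $\mathbf{Y}$ as its second marginal and hence is a joining of an ergodic component of $\mathbf{U}$ --- which lies in a single $\mathcal{Q}_\theta$ --- with $\mathbf{Y}$, so averaging $\mathbb{E}[\ell(U_0,V_0)]$ over the components bounds it below by $\gamma^\ast$. Applying the McGoff--Nobel variational theorem to $(\hat T,\hat f)$ then yields $\lim_n\inf_\theta\inf_x R_n(\theta:x)=\gamma^\ast$ w.p.1, and applying it to each fixed model $(T_\theta,f_\theta)$ yields $\lim_n\inf_x R_n(\theta:x)=g(\theta):=\min_{\mathbf{U}\in\mathcal{Q}_\theta}\gamma_\ell(\mathbf{U},\mathbf{Y})$ w.p.1.

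Next I would establish that $g$ is lower semicontinuous on the compact set $\Theta$, which gives at once that $\Theta_\ell(\mathbf{Y})=\argmin_\theta g(\theta)$ is non-empty and compact, since the minimum of a lower semicontinuous function on a compact space is attained and its argmin is a closed level set. For the semicontinuity, take $\theta_n\to\theta_\infty$ and minimizers $\mathbf{U}_n\in\mathcal{Q}_{\theta_n}$ of $g(\theta_n)$, realized by measures $\mu_n\in\mathcal{M}(\cX,T_{\theta_n})$. Since all observations lie in $[-K_\mathcal{D},K_\mathcal{D}]$ and $\mathcal{M}(\cX)$ is weak-$\ast$ compact, I pass to a subsequence with $\mu_n\to\mu_\infty$ and $\mathbf{U}_n\to\mathbf{U}_\infty$; joint continuity of $(\theta,x)\mapsto T_\theta x$ lets invariance pass to the limit, so $\mu_\infty\in\mathcal{M}(\cX,T_{\theta_\infty})$ and $\mathbf{U}_\infty\in\mathcal{Q}_{\theta_\infty}$. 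Lower semicontinuity of $\gamma_\ell(\cdot,\mathbf{Y})$ --- obtained by extracting a weak limit of near-optimal joinings, noting that the marginals converge to $\mathbf{U}_\infty$ and $\mathbf{Y}$, and using lower semicontinuity of $\ell$ --- then gives $g(\theta_\infty)\le\gamma_\ell(\mathbf{U}_\infty,\mathbf{Y})\le\liminf_n g(\theta_n)$. The same compactness shows each $\mathcal{Q}_\theta$ is compact, so the minima defining $g$ are attained.

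For part (1) I would argue by contradiction on the full-measure event where the two limiting identities above hold and where $\lim_n\inf_x R_n(\hat\theta_n:x)=\gamma^\ast$ (the defining property of minimum risk estimates together with the global variational identity). If $\dist(\hat\theta_n,\Theta_\ell(\mathbf{Y}))\not\to0$, then by compactness of $\Theta$ some subsequence $\hat\theta_{n_j}\to\theta_\infty$ with $\theta_\infty\notin\Theta_\ell(\mathbf{Y})$, so $g(\theta_\infty)>\gamma^\ast$. The core step is the localized lower bound $\liminf_j\inf_x R_{n_j}(\hat\theta_{n_j}:x)\ge g(\theta_\infty)$, proved exactly as the semicontinuity above but now allowing the near-minimizing initial states $x_{n_j}$ to vary with $j$: the empirical measures of the orbit pairs $\bigl(\hat f(\hat T^k(\hat\theta_{n_j},x_{n_j})),Y_k\bigr)$, $0\le k<n_j$, have a weak limit point that, because $\hat\theta_{n_j}\to\theta_\infty$, is a joining of some $\mathbf{U}\in\mathcal{Q}_{\theta_\infty}$ with $\mathbf{Y}$, and lower semicontinuity of $\ell$ bounds the limiting risk below by $\gamma_\ell(\mathbf{U},\mathbf{Y})\ge g(\theta_\infty)$. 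This forces $\gamma^\ast\ge g(\theta_\infty)>\gamma^\ast$, a contradiction, so $\dist(\hat\theta_n,\Theta_\ell(\mathbf{Y}))\to0$ almost surely.

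Finally, part (2) is immediate from the fixed-$\theta$ identity: for $\theta^\ast\in\Theta_\ell(\mathbf{Y})$ one has $g(\theta^\ast)=\gamma^\ast$, hence $\lim_n\inf_x R_n(\theta^\ast:x)=\gamma^\ast=\lim_n\inf_\theta\inf_x R_n(\theta:x)$ w.p.1, so the constant sequence $\theta_n\equiv\theta^\ast$ is a measurable minimum risk estimator converging trivially to $\theta^\ast$. I expect the main obstacle to be the joining-limit arguments underlying the lower semicontinuity of $g$ and the localized lower bound in part (1): identifying weak limits of empirical orbit measures as genuine joinings, verifying that the marginals converge to the correct processes as the parameters vary, and passing the loss inequalities to the limit using only the lower semicontinuity of $\ell$ together with the integrability hypothesis (\ref{eq:loss-integ}). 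These are precisely the technical ingredients supplied by the McGoff--Nobel framework, which I would invoke rather than reprove.
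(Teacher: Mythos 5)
Your proposal is correct, and its core reduction is the same as the paper's: pass to the single skew-product system over the identity on $\Theta$ (the paper works with the equivalent sequence-space version $\mathcal{Z}\subseteq\Theta\times\R^{\N}$ in Lemma \ref{Lemma:BigShift}), observe that ergodic invariant measures live on single fibers $\{\theta\}\times\cX$, and hand the problem to McGoff--Nobel. The difference is in which result of \cite{McGoffNobel} carries the load. The paper invokes the Tracking Theorem (Theorem \ref{Thm:Tracking}) with the invariant map $\varphi=\proj_\Theta$, which already packages nonemptiness and compactness of $\Theta_{min}$, convergence of $\varphi(\hat z_n)$ to $\Theta_{min}$, and the realizability of each $\theta\in\Theta_{min}$; the proof is then essentially ``verify hypotheses and apply.'' You instead cite only the single-system variational identity (globally and fiberwise) and reconstruct the parameter-convergence layer yourself: lower semicontinuity of $g(\theta)=\min_{\mathbf{U}\in\mathcal{Q}_\theta}\gamma_\ell(\mathbf{U},\mathbf{Y})$, the ergodic-decomposition argument identifying the global minimum with $\min_\theta g(\theta)$, and the localized lower bound via weak limits of empirical orbit-pair measures. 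That localized lower bound is exactly the technical heart of the Tracking Theorem, so you are re-deriving rather than avoiding it --- which is fine (the portmanteau inequality for nonnegative lower semicontinuous $\ell$ handles the liminf direction without (C1), and (C1) is only needed for the matching upper bound), but it buys no additional generality. One place where your route is genuinely cleaner: part (2) follows for you from the constant sequence $\theta_n\equiv\theta^*$ together with the fiberwise identity $\lim_n\inf_xR_n(\theta^*:x)=g(\theta^*)=\gamma^*$, whereas the paper relies on the ``furthermore'' clause of Theorem \ref{Thm:Tracking}.
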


We emphasize that there is no assumed 
relationship between the observations $\mathbf{Y}$ and the family $\mathcal{D}$.  
Additionally, identifiability of parameters is addressed in a direct way, 
through the distortion $\gamma_\ell$ via the families $\mathcal{Q}_\theta$.

\vskip.1in

Theorem \ref{Thm:GEN} shows that the limiting behavior of minimum risk estimators 
is characterized by the family $\Theta_\ell(\mathbf{Y})$, and in this way the theorem
reduces the asymptotic analysis of empirical risk minimization to the 
analysis of this limit set.  
We show below how analysis of $\Theta_{\ell} (\mathbf{Y})$ yields both positive results 
(e.g. consistency) and negative results (inconsistency) in a signal plus noise setting.

\section{Entropy for sequence families} \label{Sect:EntropyDef}

A key issue in nonparametric inference is how to assess the complexity of a family of models.  
Complexity measures play an important role in establishing consistency, convergence rates, 
and optimality for a variety of inference procedures.  
Although fitting nonlinear dynamical models to ergodic processes is substantially
different from model fitting for classification or regression, complexity still plays an important role in the
consistency of empirical risk minimization.

We assess the complexity of a family $\mathcal{D}$ through the covering numbers of the 
infinite real-valued sequences generated by its constituent models.
From a statistical point of view, it is natural to consider empirical $\ell_2$ covering numbers, while 
from a dynamical systems point of view, it is natural to consider empirical $\ell_\infty$ covering numbers, 
as is done with topological entropy \cite{Walters1982}. As we show below, these two approaches coincide.

Let $\mathbf{u} = (u_k)_{k \geq 0}$ and $\mathbf{v} = (v_k)_{k \geq 0}$ denote infinite sequences  
in $\R^{\N}$.  For each $n \geq 1$ and $1 \leq p \leq \infty$, define pseudo-metrics
$d_{n,p} (\cdot,\cdot)$ as follows:
\[
d_{n,p} (\mathbf{u}, \mathbf{v}) =
\begin{cases}
\left( n^{-1} \sum_{k = 0}^{n-1} |u_k - v_k|^p \right)^{1/p} & \mbox{ if $1 \leq p < \infty$} \\[.1in]
\max_{ 0 \leq k \leq n -1} | u_k - v_k | & \mbox{ if $p = \infty$}.
\end{cases}
\]
Let $\mathcal{U} \subseteq \R^{\N}$ be a family of infinite sequences.  
For each $r > 0$ let $N(\mathcal{U}, r, d_{n,p})$ denote the covering number of the set $\mathcal{U}$ under 
the pseudo-metric $d_{n,p} (\cdot, \cdot)$ at radius $r$.  Let
\begin{equation*}
h_p(\mathcal{U}, r) \, = \, \limsup_n \frac{1}{n} \log N(\mathcal{U}, r, d_{n,p}),
\end{equation*}
which is the exponential growth rate of the covering numbers at radius $r$, and define the $\ell_p$ entropy of
the family $\mathcal{U}$ as the supremum of these growth rates, namely
\begin{equation*}
h_p(\mathcal{U}) \, = \, 
\lim_{r \searrow 0} h_p(\mathcal{U}, r).
\end{equation*}
The following result is established in Section \ref{Sect:EntropyProofs}.  

\vskip.15in

\begin{thm} 
\label{Thm:EntropyEquality}
The $\ell_p$ entropies $h_p(\mathcal{U})$ for $1 \leq p \leq \infty$ are all equal. 
\end{thm}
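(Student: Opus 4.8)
The plan is to sandwich all the entropies between $h_1(\mathcal{U})$ and $h_\infty(\mathcal{U})$ using an elementary comparison of the normalized pseudo-metrics, and then to close the gap with a single reverse inequality $h_\infty(\mathcal{U}) \le h_1(\mathcal{U})$ obtained from a covering-refinement argument. Throughout I would work in the uniformly bounded setting, say $\sup_{\mathbf{u} \in \mathcal{U}} \sup_k |u_k| \le K$; this is the operative case (the sequences generated by $\mathcal{D}$ are bounded by $K_{\mathcal{D}}$), and it is precisely where boundedness will be used. The easy direction comes from the power-mean inequality: for any $\mathbf{u},\mathbf{v}$ and $1 \le p \le q \le \infty$ one has $d_{n,p}(\mathbf{u},\mathbf{v}) \le d_{n,q}(\mathbf{u},\mathbf{v})$, with $d_{n,\infty}$ the largest. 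Since a smaller pseudo-metric has larger balls, any $d_{n,q}$-cover at radius $r$ is also a $d_{n,p}$-cover at radius $r$, so $N(\mathcal{U},r,d_{n,p}) \le N(\mathcal{U},r,d_{n,q})$. Passing to exponential growth rates and then letting $r \searrow 0$ yields $h_1(\mathcal{U}) \le h_p(\mathcal{U}) \le h_\infty(\mathcal{U})$ for every $p$, so it suffices to prove $h_\infty(\mathcal{U}) \le h_1(\mathcal{U})$.

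For the reverse inequality I would fix a target radius $\epsilon > 0$ and a much smaller $\delta > 0$, and let $\mathcal{C}$ be a minimal $d_{n,1}$-cover of $\mathcal{U}$ at radius $\delta$. Given $\mathbf{u} \in \mathcal{U}$ with nearest center $\mathbf{c} \in \mathcal{C}$, the bound $\frac{1}{n}\sum_{k<n}|u_k - c_k| \le \delta$ forces the set of ``bad'' coordinates $B = \{ k < n : |u_k - c_k| > \epsilon \}$ to satisfy $|B| \le m := n\delta/\epsilon$, simply because each bad coordinate contributes more than $\epsilon$ to a sum bounded by $n\delta$. On the complement of $B$ the center $\mathbf{c}$ already approximates $\mathbf{u}$ to within $\epsilon$ in sup-norm, so only the few coordinates in $B$ require correction.

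This suggests building a $d_{n,\infty}$-cover at radius $\epsilon$ by refining each $\mathbf{c} \in \mathcal{C}$: choose the location of $B$ (at most $(m+1)\binom{n}{m}$ ways, accounting for its size) and, on each coordinate of $B$, replace $c_k$ by a point of a fixed $\epsilon$-grid of the bounded range $[-K,K]$, of which there are $G := \lceil 2K/\epsilon \rceil$. The resulting family is a $d_{n,\infty}$-cover of $\mathcal{U}$ at radius $\epsilon$ of cardinality at most $N(\mathcal{U},\delta,d_{n,1}) \cdot (m+1)\binom{n}{m} G^{m}$. Taking $\frac{1}{n}\log$ and passing to $\limsup_n$, with $m/n = \delta/\epsilon$ and Stirling's formula, gives the key estimate $h_\infty(\mathcal{U},\epsilon) \le h_1(\mathcal{U},\delta) + H(\delta/\epsilon) + (\delta/\epsilon)\log G$, where $H(x) = -x\log x - (1-x)\log(1-x)$ is the binary entropy. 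Holding $\epsilon$ fixed (so $G$ is fixed) and letting $\delta \searrow 0$, the correction terms $H(\delta/\epsilon)$ and $(\delta/\epsilon)\log G$ vanish while $h_1(\mathcal{U},\delta) \nearrow h_1(\mathcal{U})$, yielding $h_\infty(\mathcal{U},\epsilon) \le h_1(\mathcal{U})$ for every $\epsilon > 0$; letting $\epsilon \searrow 0$ then gives $h_\infty(\mathcal{U}) \le h_1(\mathcal{U})$ and completes the squeeze.

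I expect the main obstacle to be exactly this reverse inequality, and within it the bookkeeping showing that the combinatorial cost of correcting the bad coordinates, namely the $\binom{n}{m}$ and $G^{m}$ factors, contributes a vanishing amount to the exponential rate as $\delta \to 0$. The crucial structural point is that a small $d_{n,1}$ (equivalently, small $d_{n,p}$) distance can be achieved only with an $O(\delta n/\epsilon)$ fraction of coordinates of large discrepancy, so the expensive $\ell_\infty$ correction is confined to a negligible fraction whose rate-contribution tends to $0$ with $\delta$; uniform boundedness of $\mathcal{U}$ is what keeps the per-coordinate grid cost $G$ finite and independent of $n$.
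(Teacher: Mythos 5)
Your proof is correct and takes essentially the same route as the paper's: the central estimate in both is that a small $d_{n,1}$ (resp.\ $d_{n,p}$) discrepancy confines the coordinates of large deviation to an $O(\delta n/\epsilon)$ fraction of $\{0,\dots,n-1\}$, so the $\ell_\infty$ correction costs only a binomial factor times a grid power, each of negligible exponential rate as $\delta \searrow 0$. The paper packages this as a packing bound (Lemma \ref{Lemma:L2Linfty} counts $d_{n,\infty}$-separated points inside a single $d_{n,p}$-ball, with the radii coupled via $\epsilon=(\delta/2)^{(1+p)/p}$) rather than your direct refinement of a $d_{n,1}$-cover with decoupled radii and iterated limits, but these are interchangeable implementations of the same idea.
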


\vskip.1in

\begin{rmk}
Although it is not needed here, we note that 
Theorem \ref{Thm:EntropyEquality} holds more generally for sets of sequences 
$\mathcal{U} \subseteq A^{\N}$ where $(A, \rho)$ is any metric space such that 
\begin{equation*}
\lim_{r \searrow 0} r \log N(A,r,\rho) = 0
\end{equation*}
and the pseudo metrics $d_{n,p}(\cdot,\cdot)$ are defined in terms of $\rho$.
\end{rmk}

\vskip.1in

\begin{defn}[Entropy of a dynamical family]
The entropy $h(\mathcal{D})$ of a family $\mathcal{D}$ of dynamical models is the common 
value of $h_p(\mathcal{U}_\mathcal{D})$, where
\begin{equation} \label{Eqn:U}
\mathcal{U}_\mathcal{D} = \{ (f_\theta \circ T_\theta^k (x) )_{k \geq 0} : x \in \cX, \, \theta \in \Theta \}
\, \subseteq \, \R^{\N}
\end{equation}
is the set of infinite sequences generated by models in $\mathcal{D}$. 
\end{defn}

\begin{rmk}
It is straightforward to show that $\mathcal{U}_{\mathcal{D}}$ is a compact subset of $\R^{\N}$ in its product topology.
Let $\tau : \R^{\N} \to \R^{\N}$ be the left-shift map defined by $\tau(\mathbf{u})_k = u_{k+1}$ for $k \geq 0$.
Then it is easy to see that $\tau$ is continuous and
$\tau(\mathcal{U}_{\mathcal{D}}) \subset \mathcal{U}_{\mathcal{D}}$.
Thus $(\mathcal{U}_{\mathcal{D}}, \tau)$ is a topological dynamical system that captures
the dynamics of the family $\mathcal{D}$, and the entropy 
$h(\mathcal{D})$ defined above is the topological entropy of this system.
\end{rmk}

We note that the entropy $h(\mathcal{D})$ may also be characterized in terms of the entropies $h(\mathbf{U})$ 
of the processes in $\mathbf{U} \in \mathcal{Q}_{\mathcal{D}}$, which we define in Section \ref{Sect:EntropyAndMeanWidth}.  The following lemma is established in 
Appendix \ref{Sect:AdditionalProofs}.

\begin{lemma} 
\label{Lemma:VarPrin}
For any family $\mathcal{D}$ of dynamical models satisfying (D1)-(D3),
\begin{equation*}
h(\mathcal{D}) = \sup_{\mathbf{U} \in \mathcal{Q}_{\mathcal{D}}} h(\mathbf{U}).
\end{equation*}
\end{lemma}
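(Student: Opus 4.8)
The plan is to realize $(\mathcal{U}_\mathcal{D}, \tau)$ as a topological factor of a single compact system governing the whole family, and then combine the classical variational principle for topological entropy with a description of the $\tau$-invariant measures on $\mathcal{U}_\mathcal{D}$. First I would introduce the bundle system $F : \Theta \times \cX \to \Theta \times \cX$ defined by $F(\theta, x) = (\theta, T_\theta x)$, which is continuous by (D1)--(D2), together with the map $\Phi : \Theta \times \cX \to \R^\N$ given by $\Phi(\theta, x) = (f_\theta \circ T_\theta^k(x))_{k \geq 0}$. Conditions (D2)--(D3) make $\Phi$ continuous, its image is exactly $\mathcal{U}_\mathcal{D}$ by \eqref{Eqn:U}, and a direct computation gives $\Phi \circ F = \tau \circ \Phi$, so that $\Phi$ is a topological factor map from $(\Theta \times \cX, F)$ onto $(\mathcal{U}_\mathcal{D}, \tau)$. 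Since $(\mathcal{U}_\mathcal{D}, \tau)$ is a topological dynamical system on a compact metric space, the variational principle yields $h(\mathcal{D}) = \sup \{ h_\nu(\tau) : \nu \in \mathcal{M}(\mathcal{U}_\mathcal{D}, \tau)\}$. I would also use the fact, established in Section \ref{Sect:EntropyAndMeanWidth}, that for $\mathbf{U} \in \mathcal{Q}_\mathcal{D}$ the process entropy $h(\mathbf{U})$ equals the measure-theoretic entropy $h_\nu(\tau)$ of $\tau$ with respect to $\nu = \mathrm{law}(\mathbf{U})$, viewed as an element of $\mathcal{M}(\mathcal{U}_\mathcal{D}, \tau)$.

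The inequality $\sup_{\mathbf{U} \in \mathcal{Q}_\mathcal{D}} h(\mathbf{U}) \le h(\mathcal{D})$ is then immediate: the law of each $\mathbf{U} \in \mathcal{Q}_\mathcal{D}$ is a $\tau$-invariant measure supported on $\mathcal{U}_\mathcal{D}$, so $h(\mathbf{U}) = h_{\mathrm{law}(\mathbf{U})}(\tau)$ is bounded by the supremum appearing in the variational principle. For the reverse inequality, I would fix $\nu \in \mathcal{M}(\mathcal{U}_\mathcal{D}, \tau)$ and lift it through $\Phi$: because $\Phi$ is a factor map between compact systems, there is an $F$-invariant measure $\lambda$ on $\Theta \times \cX$ with $\Phi_* \lambda = \nu$, obtained by taking any measure that pushes to $\nu$, forming the Ces\`aro averages of its $F$-iterates, and passing to a weak-$*$ limit. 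Since $F$ fixes the $\Theta$-coordinate, disintegrating $\lambda$ over its projection $\bar\lambda$ to $\Theta$ gives $\lambda = \int_\Theta \delta_\theta \otimes \mu_\theta \, d\bar\lambda(\theta)$, and uniqueness of the disintegration forces $(T_\theta)_* \mu_\theta = \mu_\theta$, i.e. $\mu_\theta \in \mathcal{M}(\cX, T_\theta)$, for $\bar\lambda$-almost every $\theta$.

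Pushing forward through $\Phi$ then writes $\nu = \int_\Theta \nu_\theta \, d\bar\lambda(\theta)$, where each $\nu_\theta = \Phi_*(\delta_\theta \otimes \mu_\theta)$ is the law of a process $\mathbf{U}^\theta \in \mathcal{Q}_\theta \subseteq \mathcal{Q}_\mathcal{D}$. Invoking affinity of the entropy map on the space of invariant measures gives $h_\nu(\tau) = \int_\Theta h_{\nu_\theta}(\tau) \, d\bar\lambda(\theta) = \int_\Theta h(\mathbf{U}^\theta) \, d\bar\lambda(\theta) \le \sup_{\mathbf{U} \in \mathcal{Q}_\mathcal{D}} h(\mathbf{U})$. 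Taking the supremum over $\nu$ and applying the variational principle yields $h(\mathcal{D}) \le \sup_{\mathbf{U} \in \mathcal{Q}_\mathcal{D}} h(\mathbf{U})$, which together with the easy direction completes the proof.

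The main obstacle is the reverse inequality, specifically the identification of $\mathcal{M}(\mathcal{U}_\mathcal{D}, \tau)$ with mixtures of laws of processes in $\mathcal{Q}_\mathcal{D}$: an arbitrary shift-invariant measure on $\mathcal{U}_\mathcal{D}$ need \emph{not} be the law of any single process in $\mathcal{Q}_\mathcal{D}$, since it may average over several parameters $\theta$, so there is no hope of a measure-by-measure identification. The lifting-plus-disintegration argument is what resolves this, and its success hinges on $\Phi$ being a genuine factor map (which uses the joint continuity in (D2)--(D3)) and on the measurability needed to disintegrate over $\Theta$ and to conclude that $\mu_\theta$ is $T_\theta$-invariant for $\bar\lambda$-almost every $\theta$. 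The one further nonroutine ingredient is the affinity of the entropy map over the continuous decomposition $\nu = \int_\Theta \nu_\theta \, d\bar\lambda(\theta)$, which I would justify through the standard affinity of Kolmogorov--Sinai entropy on the simplex of invariant measures; note that no invertibility of $\tau$ is required, as both the variational principle and this affinity hold for continuous self-maps of compact metric spaces.
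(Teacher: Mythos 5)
Your proposal is correct, but it distributes the work differently from the paper, so a comparison is worthwhile. The paper's proof transfers everything to the auxiliary system $(\mathcal{Z},T)$ on $\Theta\times\R^{\N}$ from Section \ref{Sect:Connections}, sandwiches $h(\mathcal{D})$ between the fiber entropies $h_{\mathrm{top}}(T|_{\theta})$ and the global entropy $h_{\mathrm{top}}(T)$ by covering-number comparisons, and then applies the variational principle twice --- once to $(\mathcal{Z},T)$ and once to each fiber $(\mathcal{Z}_{\theta},T|_{\theta})$ --- using the classification of \emph{ergodic} invariant measures as products $\delta_{\theta}\otimes\nu$ (Lemma \ref{Lemma:BigShift}); since an ergodic measure sits over a single $\theta$, the identification with a process in $\mathcal{Q}_{\theta}$ is measure-by-measure and no affinity argument is needed. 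You instead apply the variational principle once, directly to $(\mathcal{U}_{\mathcal{D}},\tau)$, and treat an arbitrary (not necessarily ergodic) invariant $\nu$ by lifting it through the factor map to the bundle $(\Theta\times\cX,F)$, disintegrating over $\Theta$, and invoking affinity of entropy over the integral decomposition $\nu=\int_{\Theta}\nu_{\theta}\,d\bar\lambda(\theta)$; your easy direction is also more direct, since you observe that the law of each $\mathbf{U}\in\mathcal{Q}_{\mathcal{D}}$ is itself an invariant measure for $(\mathcal{U}_{\mathcal{D}},\tau)$ rather than routing through the fiberwise variational principle. Both arguments are sound. The one point where you should be precise is the affinity step: what you need is the integral form $h_{\int\nu_{\theta}\,d\bar\lambda}(\tau)=\int h_{\nu_{\theta}}(\tau)\,d\bar\lambda(\theta)$ for a measurable family of invariant measures (a Jacobs-type theorem), not merely affinity under finite convex combinations; alternatively, you could restrict the supremum in the variational principle to ergodic measures from the outset --- their lifts disintegrate as $\delta_{\theta}\otimes\mu_{\theta}$ for a single $\theta$ --- which removes the affinity ingredient entirely and collapses your argument onto the paper's.
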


\section{Signal Plus Noise} \label{Sect:SignalNoise}

We now turn our attention to empirical risk minimization when the observed
process $\mathbf{Y}$ has a signal plus noise structure. 
We assume in what follows that
$Y_k = V_k + \varepsilon_k$ for each $k \geq 0$, where $\mathbf{V} = \{ V_k : k \geq 0 \}$ is a 
stationary ergodic process and $\boldsymbol{\varepsilon} = \{ \varepsilon_k : k \geq 0 \}$
is an i.i.d.\ noise process that is independent of $\mathbf{V}$. We indicate this relationship
using the process-sum notation $\mathbf{Y} = \mathbf{V} + \boldsymbol{\varepsilon}$. 
We require the following integrability conditions:
 \begin{equation}  \label{Eqn:IntCondY}
 \tag{C1} \mathbb{E} \biggl[ \sup_{|x| \leq K_{\mathcal{D}}} \ell(x,Y_0) \biggr] < \infty;
 \end{equation}
 \begin{equation} \label{Eqn:IntCondV}
\tag{C2} \mathbb{E} \biggl[ \sup_{|x| \leq K_{\mathcal{D}}} \ell(x,V_0) \biggr] < \infty;
 \end{equation}
 \begin{equation} \label{Eqn:IntCondEps}
 \tag{C3} \text{for all $u,v \in \R$, \quad  $\mathbb{E} \, \ell(u,v+\varepsilon_0) < \infty$}.
 \end{equation}
Note that (\ref{Eqn:IntCondY}) is the same condition required in the general setting, and (\ref{Eqn:IntCondV}) and (\ref{Eqn:IntCondEps}) refer only to $\mathbf{V}$ and $\boldsymbol{\varepsilon}$, respectively. These conditions involve integrability of the loss with respect to the three processes $\mathbf{Y}$, $\mathbf{V}$, and $\boldsymbol{\varepsilon}$. For example, if $\ell$ is the squared loss and the three processes all have finite second moments, then conditions (C1)-(C3) are satisfied.

Theorem \ref{Thm:GEN} ensures that any sequence of minimum risk estimators will
converge to the set $\Theta_\ell(\mathbf{Y})$ of optimal parameters for $\mathbf{\mathbf{Y}}$.  
Of interest here is when and whether empirical risk minimization can decouple the
signal from the noise and recover the optimal parameters $\Theta_\ell(\mathbf{V})$ for
the signal process $\mathbf{V}$.  We begin with the following general
result.

\vskip.1in

\begin{thm} 
\label{Thm:GenObsModelConsistency}
Let $\mathbf{Y} = \mathbf{V} + \boldsymbol{\varepsilon}$ satisfy (C1)-(C3), and let $\mathcal{D}$ satisfy (D1)-(D3).
If $h(\mathcal{D}) = 0$, then any sequence 
of minimum $\ell$-risk estimates converges almost surely to $\Theta_L(\mathbf{V})$,
where $L(u,v) : = \E \, \ell(u, v + \varepsilon_0)$. 
\end{thm}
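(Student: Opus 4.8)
The plan is to combine the general convergence result (Theorem \ref{Thm:GEN}) with an analysis of the limit set that exploits the low-complexity hypothesis $h(\mathcal{D})=0$. Applying Theorem \ref{Thm:GEN} with the loss $\ell$ and the observed process $\mathbf{Y}$, any sequence of minimum $\ell$-risk estimates converges almost surely to the compact set $\Theta_\ell(\mathbf{Y})$. The theorem will therefore follow once I establish the set identity
\[
\Theta_\ell(\mathbf{Y}) \ = \ \Theta_L(\mathbf{V}), \qquad L(u,v) = \E\,\ell(u,v+\varepsilon_0).
\]
I would first record that $L$ is a legitimate loss: it is nonnegative and lower semicontinuous by Fatou's lemma together with the lower semicontinuity of $\ell$, and the integrability condition $\E \sup_{|u| \le K_\mathcal{D}} L(u,V_0) < \infty$ follows from (\ref{Eqn:IntCondY}) after interchanging the supremum with the noise expectation.

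Since $\Theta_\ell(\mathbf{Y})$ and $\Theta_L(\mathbf{V})$ are by definition the minimizers over $\theta$ of the inner-minimized distortions $\min_{\mathbf{U}\in\mathcal{Q}_\theta}\gamma_\ell(\mathbf{U},\mathbf{Y})$ and $\min_{\mathbf{U}\in\mathcal{Q}_\theta}\gamma_L(\mathbf{U},\mathbf{V})$ respectively, it suffices to prove the pointwise distortion identity $\gamma_\ell(\mathbf{U},\mathbf{Y}) = \gamma_L(\mathbf{U},\mathbf{V})$ for every $\mathbf{U}\in\mathcal{Q}_{\mathcal{D}}$: this renders the two functions of $\theta$ being minimized identical, so their argmin sets coincide. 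The crucial structural input is that \emph{every} process associated with $\mathcal{D}$ has zero entropy, since Lemma \ref{Lemma:VarPrin} and the hypothesis $h(\mathcal{D})=0$ give $h(\mathbf{U})=0$ for all $\mathbf{U}\in\mathcal{Q}_{\mathcal{D}}$. The inequality $\gamma_\ell(\mathbf{U},\mathbf{Y}) \le \gamma_L(\mathbf{U},\mathbf{V})$ is the routine half: given any joining $[\mathbf{U},\mathbf{V}]$, I adjoin a fresh i.i.d.\ sequence $\boldsymbol{\varepsilon}'$ distributed as $\boldsymbol{\varepsilon}$ and independent of $[\mathbf{U},\mathbf{V}]$, and set $\tilde{Y}_k = \tilde{V}_k + \varepsilon_k'$; this is a joining of $\mathbf{U}$ with $\mathbf{Y}$ under which $\E\,\ell(U_0,\tilde{Y}_0) = \E\,L(U_0,V_0)$, and infimizing over joinings of $\mathbf{U}$ and $\mathbf{V}$ yields the bound. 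This direction uses only the signal-plus-noise structure and no complexity hypothesis.

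The reverse inequality $\gamma_\ell(\mathbf{U},\mathbf{Y}) \ge \gamma_L(\mathbf{U},\mathbf{V})$ is the heart of the argument and is exactly where $h(\mathbf{U})=0$ enters. Its content is that a zero-entropy model cannot use the observations to track the independent noise, so the noise must average out to $L$ in every joining. I would make this precise through the covering-number formulation of entropy: the sequences generated by $\mathbf{U}$ lie in $\mathcal{U}_{\mathcal{D}}$, which has zero entropy, equivalently sub-exponential covering numbers in any $d_{n,p}$ by Theorem \ref{Thm:EntropyEquality}, and this yields a uniform strong law of large numbers,
\[
\sup_{\mathbf{u}} \Bigl| \frac{1}{n}\sum_{k=0}^{n-1}\ell(u_k, V_k+\varepsilon_k) \ - \ \frac{1}{n}\sum_{k=0}^{n-1} L(u_k,V_k) \Bigr| \ \longrightarrow \ 0 \quad \text{a.s.},
\]
the supremum running over the zero-entropy family of generated sequences, because the mean-zero noise fluctuations cannot be followed by a family too small to shatter them. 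Realizing an arbitrary joining of $\mathbf{U}$ and $\mathbf{Y}$ along generic orbits and passing to the limit then forces $\E\,\ell(U_0,Y_0) \ge \gamma_L(\mathbf{U},\mathbf{V})$, and infimizing over joinings gives the claim; equivalently, in joining language, $h(\mathbf{U})=0$ makes $\mathbf{U}$ disjoint from the i.i.d.\ process $\boldsymbol{\varepsilon}$, so the noise decouples from $\mathbf{U}$ in every joining and contributes precisely the averaged loss $L$.

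I expect this reverse inequality to be the main obstacle. One must show that zero entropy genuinely prevents exploitation of the noise, and this fails without the complexity hypothesis (cf.\ Proposition \ref{Prop:Negative}): a model family rich enough to shatter the noise could drive $\gamma_\ell(\mathbf{U},\mathbf{Y})$ strictly below $\gamma_L(\mathbf{U},\mathbf{V})$. Controlling the noise fluctuations uniformly over the model family — either through a uniform ergodic theorem tied to the covering numbers of $\mathcal{U}_{\mathcal{D}}$, or through the disjointness of zero-entropy systems from i.i.d.\ processes — is where the results of McGoff and Nobel and the joining machinery carry out the essential work.
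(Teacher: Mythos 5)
Your overall architecture matches the paper's: invoke Theorem \ref{Thm:GEN} to reduce to identifying the limit set, then prove the distortion identity $\gamma_\ell(\mathbf{U},\mathbf{Y})=\gamma_L(\mathbf{U},\mathbf{V})$ for each $\mathbf{U}\in\mathcal{Q}_{\mathcal{D}}$. Your easy direction ($\le$), adjoining fresh independent noise to an optimal joining $[\mathbf{U},\mathbf{V}]$, is exactly the paper's argument and is correct. But the hard direction is where you have a genuine gap, and neither of the two routes you sketch actually closes it. The covering-number route does not work under the stated hypotheses: sub-exponential covering numbers of the sequence family $\mathcal{U}_{\mathcal{D}}$ would give a uniform law of large numbers only if the centered summands $\ell(u_k,V_k+\varepsilon_k)-L(u_k,V_k)$ admitted exponential concentration, which requires boundedness or tail control that conditions (C1)--(C3) do not supply (they are bare integrability conditions), and $\ell$ is only lower semicontinuous, so a $d_{n,p}$-covering of the sequences does not transfer to a covering of the induced loss class. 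Moreover, ``realizing an arbitrary joining of $\mathbf{U}$ and $\mathbf{Y}$ along generic orbits'' leaves unaddressed how the decomposition $\mathbf{Y}=\mathbf{V}+\boldsymbol{\varepsilon}$ is to be realized \emph{inside} a given joining $[\mathbf{U},\mathbf{Y}]$; this requires the relatively independent joining of $[\mathbf{U},\mathbf{Y}]$ with the independent joining $[\mathbf{V},\boldsymbol{\varepsilon}]$ over the common factor $\mathbf{Y}$, which you never construct.

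Your alternative ``joining language'' route is closer to the paper's, but as stated it proves too little. Pairwise disjointness of the zero-entropy process $\mathbf{U}$ from the i.i.d.\ process $\boldsymbol{\varepsilon}$ (Furstenberg's Theorem \ref{Thm:Disjointness}) gives only that $\mathbf{U}$ and $\boldsymbol{\varepsilon}$ are independent in any joining of the two; what the computation $\E\,\ell(U_0,V_0+\varepsilon_0)=\E\,L(U_0,V_0)$ actually requires is that the \emph{pair} $[\mathbf{U},\mathbf{V}]$ be jointly independent of $\boldsymbol{\varepsilon}$ inside the three-fold joining $[\mathbf{U},\mathbf{V},\boldsymbol{\varepsilon}]$ obtained by projecting the four-fold relatively independent joining. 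Pairwise independence of $\mathbf{U}$ from $\boldsymbol{\varepsilon}$ and of $\mathbf{V}$ from $\boldsymbol{\varepsilon}$ does not imply this joint independence. The paper fills exactly this gap with Theorem \ref{Thm:OurDisjointness}, a three-process extension of Furstenberg's disjointness theorem (proved in the appendix by an entropy-cocycle argument), applied after constructing the relatively independent joining via Theorem \ref{Prop:RIJ}. You correctly located the crux of the proof --- that zero entropy forces the noise to decouple --- but the decoupling statement you need is strictly stronger than the disjointness you cite, and supplying it is the substantive content of the paper's argument.
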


\begin{rmk}
Since $\ell(\cdot, \cdot)$ is nonnegative and lower semicontinuous, the auxiliary 
loss function $L(\cdot, \cdot)$ has the same properties (using Fatou's Lemma for the lower semicontinuity). 
\end{rmk}

\begin{rmk}
If a given process $\mathbf{Y}$ can be expressed in two different ways as 
$\mathbf{Y} = \mathbf{V} + \boldsymbol{\varepsilon}$ and 
$\mathbf{Y} = \mathbf{V}'+\boldsymbol{\varepsilon}'$, then the proof of
Theorem \ref{Thm:GenObsModelConsistency} shows that 
$\Theta_L(\mathbf{V}) = \Theta_{L'}(\mathbf{V}')$, where $L'$ is defined using $\boldsymbol{\varepsilon}'$ 
in place of $\boldsymbol{\varepsilon}$.
\end{rmk}

Theorem \ref{Thm:GenObsModelConsistency} shows that if $h(\mathcal{D}) = 0$, then empirical risk 
minimization does indeed decouple the signal from the noise. 
However, the presence of the auxiliary loss function $L(\cdot,\cdot)$ in the conclusion of the theorem 
begs the question of whether the limit set is equal to the limiting parameter set 
$\Theta_{\ell}(\mathbf{V})$ associated with the signal, as one would like. 
The following two results address this question under additional hypotheses.

\vskip.1in

\begin{thm}  \label{Thm:SPEC}
Let $\mathbf{Y} = \mathbf{V} + \boldsymbol{\varepsilon}$ satisfy (C1)-(C3), and let $\mathcal{D}$ be a family of
dynamical models satisfying (D1)-(D3) with $h(\mathcal{D}) = 0$.  Let $\{ \hat{\theta}_n : n \geq 1\}$ be
any sequence of minimum risk estimators based on $\mathbf{Y}$.
\vskip.1in
\begin{enumerate}

\item
If $\ell(u,v) = D_F(v,u)$ is a Bregman divergence and $\E \, \varepsilon_0 = 0$, then  
$\hat{\theta}_n$ converges almost surely to $\Theta_\ell(\mathbf{V})$. 

\vskip.1in

\item
If $\mathbf{V}$ is an ergodic process in $\mathcal{Q}_{\theta_0}$ and 
$\mathbb{E} \, \ell(u,v+\varepsilon_0) \geq \mathbb{E} \, \ell(0,\varepsilon_0)$ for all $u,v$,
with equality if and only if $u = v$, then $\hat{\theta}_n$
converges almost surely to 
$\{ \theta \in \Theta \, : \, \mathbf{V} \in \mathcal{Q}_{\theta} \}$. 

\end{enumerate}

\end{thm}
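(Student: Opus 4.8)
The plan is to derive both parts from Theorem~\ref{Thm:GenObsModelConsistency}, which already asserts that any sequence of minimum risk estimators converges almost surely to $\Theta_L(\mathbf{V})$ for $L(u,v) = \E\,\ell(u, v + \varepsilon_0)$. In each case it therefore suffices to carry out a population-level computation identifying $\Theta_L(\mathbf{V})$ with the claimed limit set; no further probabilistic input about the estimators is needed.

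For part (1), the key is that the Bregman structure and the centering $\E\,\varepsilon_0 = 0$ reduce $L$ to $\ell$ up to a term depending only on the observation. Writing $\ell(u,v) = D_F(v,u) = F(v) - F(u) - F'(u)(v-u)$, I would compute
\[
L(u,v) \;=\; \E\bigl[F(v+\varepsilon_0)\bigr] - F(u) - F'(u)(v - u) \;=\; \ell(u,v) + g(v),
\]
where $g(v) = \E[F(v+\varepsilon_0)] - F(v)$ and the cross term vanishes because $\E\,\varepsilon_0 = 0$. By Jensen's inequality $g \ge 0$, and since $g(v) = L(0,v) - \ell(0,v) \le L(0,v)$ we get $\E\,g(V_0) \le \E\,L(0,V_0) = \E\,\ell(0,Y_0) < \infty$ from (C1). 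Because the $\mathbf{V}$-marginal is fixed across all joinings, $\E\,g(V_0)$ is a finite constant independent of both the joining and of $\mathbf{U}$, so $\gamma_L(\mathbf{U},\mathbf{V}) = \gamma_\ell(\mathbf{U},\mathbf{V}) + \E\,g(V_0)$ for every $\mathbf{U}$. A constant shift leaves the minimizers unchanged, hence $\Theta_L(\mathbf{V}) = \Theta_\ell(\mathbf{V})$ and part (1) follows.

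For part (2), set $m = \E\,\ell(0,\varepsilon_0)$, so the hypothesis reads $L(u,v) \ge m$ with equality if and only if $u = v$. I would show $\Theta_L(\mathbf{V}) = \{\theta : \mathbf{V} \in \mathcal{Q}_\theta\}$. Since $L \ge m$ pointwise, $\gamma_L(\mathbf{U},\mathbf{V}) \ge m$ for every $\mathbf{U}$, while the diagonal joining gives $\gamma_L(\mathbf{V},\mathbf{V}) = m$; as $\mathbf{V} \in \mathcal{Q}_{\theta_0}$, the global minimum of the distortion is $m$. The crucial step is the converse: if $\gamma_L(\mathbf{U},\mathbf{V}) = m$ then $\mathbf{U} \stackrel{d}{=} \mathbf{V}$. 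Here I would use that $\cJ(\mathbf{U},\mathbf{V})$ is weak-star compact (fixed marginals force tightness) and that $\mathbf{W} \mapsto \E_{\mathbf{W}}\,L(U_0,V_0)$ is lower semicontinuous (as $L$ is nonnegative and lower semicontinuous), so the infimum is attained at some joining $\mathbf{W}_*$. Then $\E_{\mathbf{W}_*} L(U_0,V_0) = m$ together with $L \ge m$ forces $L(U_0,V_0) = m$ almost surely, hence $U_0 = V_0$ almost surely; stationarity of $\mathbf{W}_*$ then propagates this to all coordinates, giving $\mathbf{U} \stackrel{d}{=} \mathbf{V}$. Combining the two directions (and using compactness of $\mathcal{Q}_\theta$ with lower semicontinuity of $\gamma_L(\cdot,\mathbf{V})$ to attain the minimum over $\mathcal{Q}_\theta$), a parameter $\theta$ lies in $\Theta_L(\mathbf{V})$ if and only if some $\mathbf{U} \in \mathcal{Q}_\theta$ has the distribution of $\mathbf{V}$, i.e.\ if and only if $\mathbf{V} \in \mathcal{Q}_\theta$. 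Theorem~\ref{Thm:GenObsModelConsistency} then yields the claim.

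The hard part will be the characterization $\gamma_L(\mathbf{U},\mathbf{V}) = m \Rightarrow \mathbf{U} \stackrel{d}{=} \mathbf{V}$ in part (2): it hinges on attaining the joining infimum (compactness plus lower semicontinuity) and on combining the strict equality condition with stationarity of the optimal joining. In part (1) the only delicate point is verifying that the additive constant $\E\,g(V_0)$ is finite, which I control through (C1). Throughout I would use the convention that processes are identified with their laws on $\R^{\N}$, so that ``$\mathbf{U} \in \mathcal{Q}_\theta$ and $\mathbf{U} \stackrel{d}{=} \mathbf{V}$'' reads directly as ``$\mathbf{V} \in \mathcal{Q}_\theta$.''
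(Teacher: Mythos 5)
Your proposal is correct and follows essentially the same route as the paper: both parts reduce to identifying $\Theta_L(\mathbf{V})$ after invoking Theorem~\ref{Thm:GenObsModelConsistency}, with the identical decomposition $L(u,v)=\ell(u,v)+G(v)$ in part~(1) and the identical equality-case analysis (equality forces $U_0=V_0$ a.s., hence $\mathbf{U}=\mathbf{V}$ by stationarity) in part~(2). The one place you go beyond the paper is in making explicit the attainment of the infimum over $\cJ(\mathbf{U},\mathbf{V})$ via weak-star compactness and lower semicontinuity, a step the paper's argument uses implicitly when passing from ``equality in $\gamma_L$'' to ``equality for a joining''; this is a worthwhile clarification rather than a different method.
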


Theorem \ref{Thm:SPEC} establishes that minimum risk fitting of a zero-entropy dynamical family 
effectively isolates the signal under two types of hypotheses. 
The first places restrictions on the loss function but allows the 
observation process to be quite general. The second places restrictions on the signal and on the joint behavior
of the loss function and the noise processes.

Without the entropy condition $h(\mathcal{D}) = 0$, the conclusion of Theorem \ref{Thm:SPEC} does 
not hold in general, as the following proposition highlights. 
For $\theta \in \Theta$, let $\mathcal{D}_{\theta} = \{ (T_{\theta},f_{\theta}) \}$ be the family consisting of the single dynamical model associated to $\theta$.

\begin{prop} \label{Prop:Negative}
Let $\mathcal{D}$ be a family of dynamical models satisfying (D1)-(D3).  Suppose that 
\begin{itemize}
\item $h(\mathcal{D})>0$;
\item there exists $\theta_0$ such that $h(\mathcal{D}_{\theta_0}) = 0$ and
$\mathcal{Q}_{\theta_0} \setminus \cup_{\theta \neq \theta_0} \mathcal{Q}_{\theta}$
contains an ergodic process $\mathbf{V}$.
\end{itemize}
Then there exists $\sigma_0 > 0$ such that for every i.i.d.\ process 
$\boldsymbol{\varepsilon}$ with $\varepsilon_i \sim N(0,\sigma^2)$ for $\sigma > \sigma_0$,  
the least squares estimates derived from 
$\mathbf{Y} = \mathbf{V} + \boldsymbol{\varepsilon}$ converge to a compact set 
$\Theta_*$ such that $\mathbf{V} \notin \cup_{\theta \in \Theta_*} \mathcal{Q}_{\theta}$. 
Moreover, such a family $\mathcal{D}$ exists. 
\end{prop}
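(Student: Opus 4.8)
The plan is to specialize to the least squares loss $\ell(u,v) = (u-v)^2$ and reduce the assertion to a comparison of two distortion values. By Theorem \ref{Thm:GEN} the least squares estimates converge almost surely to the compact set $\Theta_* := \Theta_\ell(\mathbf{Y})$, where $\mathbf{Y} = \mathbf{V} + \boldsymbol{\varepsilon}$; here (C1) holds because $f_\theta$ is bounded and $\varepsilon_0$ has finite variance, and $\mathbf{Y}$ is ergodic as a factor of the ergodic product system $\mathbf{V} \times \boldsymbol{\varepsilon}$ (ergodic times mixing). Since $\mathbf{V} \in \mathcal{Q}_{\theta_0} \setminus \bigcup_{\theta \neq \theta_0} \mathcal{Q}_\theta$, the process $\mathbf{V}$ belongs to $\bigcup_{\theta \in \Theta_*} \mathcal{Q}_\theta$ if and only if $\theta_0 \in \Theta_*$. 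It therefore suffices to show that for all large $\sigma$,
\[
\min_{\theta \in \Theta}\ \min_{\mathbf{U} \in \mathcal{Q}_\theta} \gamma_\ell(\mathbf{U}, \mathbf{Y}) \ < \ \min_{\mathbf{U} \in \mathcal{Q}_{\theta_0}} \gamma_\ell(\mathbf{U}, \mathbf{Y}),
\]
so that $\theta_0$ is not a minimizer. The argument splits into a lower bound on the right-hand side and a strictly smaller upper bound on the left.

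For the right-hand side I would show it equals exactly $\sigma^2$. By Lemma \ref{Lemma:VarPrin} every $\mathbf{U} \in \mathcal{Q}_{\theta_0}$ has zero entropy, and a zero-entropy process is disjoint from the i.i.d.\ Gaussian process $\boldsymbol{\varepsilon}$ (zero-entropy systems are disjoint from K-systems). Fixing a joining $\lambda$ of such a $\mathbf{U}$ with $\mathbf{Y}$, I would form the relatively independent joining of $\mathbf{U}$, $\mathbf{V}$, and $\boldsymbol{\varepsilon}$ over their common factor $\mathbf{Y}$; its $(\mathbf{U},\mathbf{Y})$-marginal is $\lambda$ and its $(\mathbf{V},\boldsymbol{\varepsilon})$-marginal is the original independent coupling. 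Disjointness forces the $(\mathbf{U},\boldsymbol{\varepsilon})$-marginal to be independent, so $\E[U_0 \varepsilon_0] = 0$, while $\E[V_0 \varepsilon_0] = 0$ by independence of $\mathbf{V}$ and $\boldsymbol{\varepsilon}$. Expanding $\E[(U_0 - Y_0)^2] = \E[(U_0 - V_0)^2] + \sigma^2 - 2\E[(U_0 - V_0)\varepsilon_0]$ then yields $\E_\lambda[(U_0 - Y_0)^2] \geq \sigma^2$. As $\lambda$ and $\mathbf{U}$ were arbitrary the right-hand side is $\geq \sigma^2$, and taking $\mathbf{U} = \mathbf{V}$ with the natural joining $(\mathbf{V}, \mathbf{V} + \boldsymbol{\varepsilon})$ shows it is $\leq \sigma^2$; hence it equals $\sigma^2$.

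The remaining task, and the main obstacle, is to produce a single model in $\mathcal{D}$ whose distortion with $\mathbf{Y}$ is strictly below $\sigma^2$ once $\sigma$ is large, using only $h(\mathcal{D}) > 0$. The mechanism is that positive entropy makes the family rich enough to correlate with the noise: heuristically there are $e^{n(h(\mathcal{D}) - o(1))}$ essentially distinct bounded length-$n$ sequences in $\mathcal{U}_\mathcal{D}$, and over this many candidates the best empirical correlation $\tfrac1n \sum_{k<n} u_k \varepsilon_k$ with a noise block grows like a positive constant times $\sigma$ — equivalently, positive entropy corresponds to a positive limiting average (Gaussian) mean width, which is precisely $\E \sup_{\mathbf{u}} \tfrac1n \sum_{k<n} u_k \varepsilon_k$ up to normalization. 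Concretely I would extract (via Lemma \ref{Lemma:VarPrin} and ergodic decomposition) an ergodic $\mathbf{U}^* \in \mathcal{Q}_\mathcal{D}$ with $h(\mathbf{U}^*) > 0$, use Sinai's theorem to obtain a nontrivial Bernoulli factor $B$ of $\mathbf{U}^*$, realize $B$ as a factor of $\boldsymbol{\varepsilon}$ (any finite-entropy Bernoulli shift is a factor of the infinite-entropy Gaussian i.i.d.\ process), and join $\mathbf{U}^*$ to $\boldsymbol{\varepsilon}$ relatively independently over $B$. The hard step is guaranteeing that this joining produces a correlation $\E[U_0^* \varepsilon_0]$ of order $\sigma$: one must arrange that the time-zero observation $U_0^*$ is not measurable with respect to the Pinsker factor (which holds because $\mathbf{U}^*$ has positive process entropy) and then align the sign of the factor coupling with the noise, using symmetry of the Gaussian to fix the sign. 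Granting positive correlation of order $\sigma$, the expansion $\E[(U_0^* - \varepsilon_0)^2] = \E[(U_0^*)^2] - 2\E[U_0^* \varepsilon_0] + \sigma^2 < \sigma^2$ holds for all large $\sigma$, since $\E[(U_0^*)^2] \leq K_\mathcal{D}^2$ is bounded.

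Finally, for the existence claim I would give an explicit family on $\cX = \{0,1\}^{\N}$ with $\Theta = \{\theta_0, \theta_1\}$ (discrete metric). Take $T_{\theta_0} = \Id$ and $f_{\theta_0} \equiv 0$, so $\mathcal{Q}_{\theta_0} = \{\mathbf{0}\}$, $h(\mathcal{D}_{\theta_0}) = 0$, and $\mathbf{V} = \mathbf{0}$ is ergodic; take $T_{\theta_1}$ the shift with $f_{\theta_1}(x) = b(2x_0 - 1) \in \{-b, b\}$ for fixed $b > 0$, so the Bernoulli$(\tfrac12)$ measure gives $h(\mathcal{D}_{\theta_1}) = \log 2 > 0$ and hence $h(\mathcal{D}) > 0$. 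Every process in $\mathcal{Q}_{\theta_1}$ has $U_0 \in \{-b,b\}$ almost surely, so $\mathbf{0} \notin \mathcal{Q}_{\theta_1}$ and the second bullet holds. Here $\mathbf{Y} = \boldsymbol{\varepsilon}$, the right-hand distortion is $\sigma^2$, and the site-wise joining $U_k = b\,\mathrm{sign}(\varepsilon_k)$ (a valid joining since both processes are i.i.d.) gives $\E[(U_0 - \varepsilon_0)^2] = b^2 - 2b\sigma\sqrt{2/\pi} + \sigma^2$, which is strictly below $\sigma^2$ whenever $\sigma > \sigma_0 := b\sqrt{\pi/8}$. Thus $\theta_0 \notin \Theta_*$ and $\mathbf{V} \notin \bigcup_{\theta \in \Theta_*} \mathcal{Q}_\theta$, completing the existence part and illustrating the general mechanism.
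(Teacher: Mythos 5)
Your overall architecture matches the paper's: reduce to showing $\theta_0 \notin \Theta_\ell(\mathbf{Y})$, prove $\gamma_2(\mathbf{U},\mathbf{Y}) \geq \sigma^2$ for every $\mathbf{U} \in \mathcal{Q}_{\theta_0}$ via disjointness of zero-entropy processes from i.i.d.\ noise, and then beat $\sigma^2$ with some other model in $\mathcal{D}$ by correlating with the noise. The lower-bound half and the reduction are correct, and your explicit two-parameter family on $\{0,1\}^{\N}$ with the coordinatewise joining $U_k = b\,\mathrm{sign}(\varepsilon_k)$ is a clean, fully worked alternative to the paper's identity-plus-logistic example; the threshold $\sigma_0 = b\sqrt{\pi/8}$ checks out.

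The gap is in the general upper bound, and you have flagged it yourself: you never actually produce, from the hypothesis $h(\mathcal{D})>0$ alone, a process $\mathbf{U}\in\mathcal{Q}_{\mathcal{D}}$ and a joining $[\mathbf{U},\boldsymbol{\varepsilon}]$ with $\E[U_0\varepsilon_0] \geq c\sigma$ for a constant $c>0$ independent of $\sigma$. The Sinai/Bernoulli-factor route does not deliver this: the relatively independent joining of $\mathbf{U}^*$ and $\boldsymbol{\varepsilon}$ over a common Bernoulli factor is built from factor maps that are highly nonlocal, so there is no reason the \emph{time-zero, coordinatewise} correlation $\E[U_0^*\varepsilon_0]$ is nonzero (it can vanish by symmetry), and even less reason it scales like $\sigma$; non-measurability of $U_0^*$ with respect to the Pinsker factor is an information-theoretic statement that does not control a covariance. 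Your phrase ``Granting positive correlation of order $\sigma$'' is exactly the missing step. The paper closes this gap with two results you did not invoke but which are available to you: Theorem \ref{Thm:Furstenberg} shows $h(\mathcal{D})>0$ implies $\kappa_G(\mathcal{D})>0$ (via Sudakov's minoration together with Theorem \ref{Thm:EntropyEquality} --- this is the rigorous version of your ``$e^{nh}$ candidates'' heuristic), and Theorem \ref{Thm:KappaVar} shows the supremum defining the mean width is \emph{attained} by a joining, yielding $\E[U_0\varepsilon_0(\sigma)] = \sigma\kappa_G(\mathcal{D})$ for some $\mathbf{U}\in\mathcal{Q}_{\mathcal{D}}$. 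One then takes the independent joining of $\mathbf{V}$ with $[\mathbf{U},\boldsymbol{\varepsilon}]$ (note: in the general case you must carry $\mathbf{V}$ along, since $\mathbf{Y}=\mathbf{V}+\boldsymbol{\varepsilon}$, not $\boldsymbol{\varepsilon}$; your final expansion $\E[(U_0^*-\varepsilon_0)^2]$ drops $V_0$) and chooses $\sigma_0$ so that the bounded term $\E[(U_0-V_0)^2]$ is dominated by $2\sigma\kappa_G(\mathcal{D})$. With that substitution your argument becomes the paper's proof.
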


Under the conditions of this proposition, inconsistency holds almost surely despite the fact that the signal process $\mathbf{V}$ is generated by a dynamical model in the family. The idea underlying this phenomenon is that since $\mathcal{D}$ has positive entropy, it is capable of tracking the noise, and then the least squares estimates will overfit the observed sequence. 

\subsection{Squared Loss and Mean Width} 
\label{Sect:SquaredLossMeanWidth}

In this section we give special attention to the squared loss, $\ell(x,y) = (x-y)^2$. 
Analysis of the squared loss naturally leads to another measure of complexity of the family 
$\mathcal{D}$ based on the mean width of sequences generated by $\mathcal{D}$
with respect to the noise.

\begin{defn} \label{Defn:Kappa}
Let $\mathcal{D}$ be a family of dynamical models, and 
let $\boldsymbol{\varepsilon} = (\varepsilon_k)_{k \geq 0}$ be an i.i.d.\ process with mean zero
and finite variance.  
The $n$-sample mean width of $\mathcal{D}$ relative to $\boldsymbol{\varepsilon}$ is 
\begin{equation}
\label{kappan-def}
\kappa_n(\mathcal{D} : \boldsymbol{\varepsilon}) 
\ = \ 
\mathbb{E} \biggl[ \sup_{x,\theta} \sum_{k = 0}^{n-1}  f_{\theta} \circ T_{\theta}^k(x) \cdot \varepsilon_k \biggr] .
\end{equation}
Define the mean width of $\mathcal{D}$ relative to $\boldsymbol{\varepsilon}$ to be the 
limiting linear growth rate of the finite sample mean widths,
\begin{equation} \label{MW}
\kappa(\mathcal{D} : \boldsymbol{\varepsilon}) = \lim_n \frac{1}{n} \kappa_n(\mathcal{D} : \boldsymbol{\varepsilon}),
\end{equation}
which exists by subadditivity (see Remark \ref{Rmk:Love}).
When $\varepsilon_i \sim N(0,1)$ we write $\kappa(\mathcal{D} : \boldsymbol{\varepsilon})$ as 
$\kappa_{G}(\mathcal{D})$ and refer to this quantity as the Gaussian 
mean width of the family $\mathcal{D}$.  
\end{defn}

\vskip.1in

Finite sample mean widths have been widely studied in machine learning and empirical process theory, with an emphasis on Rademacher and 
Gaussian noise \cite{boucheron2013concentration,ledoux2013probability}.  The mean width of $\mathcal{D}$ has close
connections with the entropy of $\mathcal{D}$.

\begin{thm} \label{Thm:Furstenberg}
Let  $\boldsymbol{\varepsilon} = (\epsilon_k)_{k \geq 0}$ be i.i.d.\ with mean zero and finite variance.
If $h(\mathcal{D}) = 0$ then $\kappa(\mathcal{D} : \boldsymbol{\varepsilon}) = 0$.   
Moreover, the Gaussian mean width $\kappa_{G}(\mathcal{D}) = 0$ if and only if
$h(\mathcal{D}) = 0$.
\end{thm}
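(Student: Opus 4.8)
The plan is to translate both assertions into statements about the covering numbers of the compact sequence set $\mathcal{U}_{\mathcal{D}} \subseteq \R^{\N}$ defined in (\ref{Eqn:U}). By Theorem \ref{Thm:EntropyEquality} the entropy $h(\mathcal{D})$ equals the $\ell_2$ entropy $h_2(\mathcal{U}_{\mathcal{D}}) = \lim_{r \searrow 0} h_2(\mathcal{U}_{\mathcal{D}}, r)$; since $r \mapsto h_2(\mathcal{U}_{\mathcal{D}}, r)$ is nonnegative and nonincreasing, $h(\mathcal{D}) = 0$ if and only if $h_2(\mathcal{U}_{\mathcal{D}}, r) = 0$, i.e. $\log N(\mathcal{U}_{\mathcal{D}}, r, d_{n,2}) = o(n)$, for every fixed $r > 0$. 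Writing $\mathbf{u} = (f_\theta \circ T_\theta^k(x))_{k \geq 0}$, the $n$-sample mean width is the expected supremum $\kappa_n(\mathcal{D} : \boldsymbol{\varepsilon}) = \mathbb{E}\,\sup_{\mathbf{u} \in \mathcal{U}_{\mathcal{D}}} \sum_{k=0}^{n-1} u_k \varepsilon_k$ of a stochastic process indexed by $\mathcal{U}_{\mathcal{D}}$, whose canonical metric in the Gaussian case is exactly $\sqrt{n}\,d_{n,2}$; this is the bridge between the mean width and the $\ell_2$ covering numbers. Since Gaussian noise is a special case of the first assertion, it suffices to prove (a) $h(\mathcal{D}) = 0 \Rightarrow \kappa(\mathcal{D}:\boldsymbol{\varepsilon}) = 0$ for general mean-zero finite-variance noise, and (b) $\kappa_G(\mathcal{D}) = 0 \Rightarrow h(\mathcal{D}) = 0$.

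For (a) I would avoid chaining and work at a single scale. Fix $r > 0$ and let $C_r \subseteq \mathcal{U}_{\mathcal{D}}$ be a minimal $r$-net in $d_{n,2}$ of size $N = N(\mathcal{U}_{\mathcal{D}}, r, d_{n,2})$. For each $\mathbf{u}$ choose a net point $\mathbf{u}'$ with $\sum_{k<n}(u_k - u'_k)^2 \leq nr^2$; Cauchy--Schwarz then gives $\sum_{k<n} u_k \varepsilon_k \leq \max_{\mathbf{u}' \in C_r}\sum_{k<n} u'_k \varepsilon_k + r\sqrt{n}\,(\sum_{k<n}\varepsilon_k^2)^{1/2}$. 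Taking expectations and using $\mathbb{E}(\sum_{k<n}\varepsilon_k^2)^{1/2} \leq \sigma\sqrt{n}$ bounds the remainder by $r\sigma n$, so $\tfrac1n\kappa_n \leq \tfrac1n\,\mathbb{E}\max_{\mathbf{u}'\in C_r}\sum u'_k\varepsilon_k + r\sigma$. To control the maximum over the finite net I would split the noise by truncation: write $\varepsilon_k = a_k + b_k$, where $a_k$ is the centred truncation of $\varepsilon_k$ at level $M$ (so $|a_k| \leq 2M$) and $b_k$ is the centred tail. The bounded part is sub-Gaussian, so a Hoeffding maximal inequality gives $\mathbb{E}\max_{\mathbf{u}'}\sum u'_k a_k \leq C M K_{\mathcal{D}}\sqrt{n\log N}$, and dividing by $n$ yields $C M K_{\mathcal{D}}\sqrt{\log N / n} \to 0$ because $\log N = o(n)$ at fixed $r$. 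The tail part is handled crudely by $\mathbb{E}\max_{\mathbf{u}'}\sum u'_k b_k \leq K_{\mathcal{D}}\,n\,\mathbb{E}|b_0|$, where $\beta(M) := \mathbb{E}|b_0| \to 0$ as $M \to \infty$ since $\mathbb{E}|\varepsilon_0| < \infty$. Combining, $\limsup_n \tfrac1n\kappa_n \leq K_{\mathcal{D}}\beta(M) + r\sigma$ for every $M$ and $r$; letting $M \to \infty$, then $r \searrow 0$, and noting $\kappa_n \geq 0$, gives $\kappa(\mathcal{D}:\boldsymbol{\varepsilon}) = 0$.

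For (b) the tool is Sudakov minoration applied to the Gaussian process $\sum_{k<n} u_k \varepsilon_k$ with canonical metric $\sqrt{n}\,d_{n,2}$. Taking the Sudakov radius $r\sqrt{n}$, and using $N(\mathcal{U}_{\mathcal{D}}, \sqrt{n}\,d_{n,2}, r\sqrt{n}) = N(\mathcal{U}_{\mathcal{D}}, r, d_{n,2})$, gives $\kappa_n \geq c\,r\sqrt{n}\,\sqrt{\log N(\mathcal{U}_{\mathcal{D}}, r, d_{n,2})}$ for a universal constant $c$. Dividing by $n$ and passing to the limit (which exists by subadditivity) yields $\kappa_G(\mathcal{D}) \geq c\,r\sqrt{h_2(\mathcal{U}_{\mathcal{D}}, r)}$ for every $r > 0$. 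Hence $\kappa_G(\mathcal{D}) = 0$ forces $h_2(\mathcal{U}_{\mathcal{D}}, r) = 0$ for all $r$, which by the first paragraph is exactly $h(\mathcal{D}) = 0$.

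The main obstacle is step (a) for non-Gaussian noise: the process $\sum u_k \varepsilon_k$ need not be sub-Gaussian, so Dudley's entropy integral (which would suffice in the Gaussian case, after splitting the integral at a small scale and using the trivial volumetric bound $\log N(\mathcal{U}_{\mathcal{D}}, r, d_{n,2}) \leq n\log(C K_{\mathcal{D}}/r)$ near zero) is not directly available. The truncation decomposition is the device that circumvents this: it isolates a bounded, genuinely sub-Gaussian part to which a maximal inequality applies and relegates the unbounded part to a crude $L^1$ estimate that is small by finite variance alone. I would take care that the net is chosen in $d_{n,2}$, so that the Cauchy--Schwarz remainder scales like $r\sigma$, and that the order of limits ($n \to \infty$, then $M \to \infty$, then $r \searrow 0$) is respected throughout.
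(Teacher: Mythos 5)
Your argument is correct, and for the forward implication it takes a genuinely different route from the paper. The paper proves $h(\mathcal{D})=0 \Rightarrow \kappa(\mathcal{D}:\boldsymbol{\varepsilon})=0$ by pure ergodic theory: it invokes the variational characterization $\kappa(\mathcal{D}:\boldsymbol{\varepsilon}) = \sup_{\mathbf{U}\in\mathcal{Q}_{\mathcal{D}}}\sup_{\cJ(\mathbf{U},\boldsymbol{\varepsilon})}\E[U_0\varepsilon_0]$ (Theorem \ref{Thm:KappaVar}, itself a consequence of the tracking machinery), reduces $h(\mathcal{D})=0$ to $h(\mathbf{U})=0$ for every $\mathbf{U}\in\mathcal{Q}_{\mathcal{D}}$ via Lemma \ref{Lemma:VarPrin}, and then applies Furstenberg's disjointness theorem to conclude that the only joining of $\mathbf{U}$ with the i.i.d.\ process is the independent one, whence $\E[U_0\varepsilon_0]=0$. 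You instead give a direct empirical-process upper bound: a single-scale $d_{n,2}$-net, a Cauchy--Schwarz remainder of order $r\sigma n$, and a truncation of the noise so that the bounded part is controlled by a sub-Gaussian maximal inequality over the net (using $\log N(\mathcal{U}_{\mathcal{D}},r,d_{n,2})=o(n)$, which is indeed equivalent to $h(\mathcal{D})=0$ by Theorem \ref{Thm:EntropyEquality} and monotonicity of $h_2(\mathcal{U}_{\mathcal{D}},r)$ in $r$) while the tail part is absorbed into an $L^1$ term vanishing as the truncation level grows. This is self-contained and quantitative at finite $n$ (it is essentially the upper-bound counterpart alluded to in the remark following the theorem), at the cost of obscuring the structural reason the result holds, namely disjointness of zero-entropy processes from i.i.d.\ processes --- the mechanism the paper reuses in Theorems \ref{Thm:GenObsModelConsistency} and \ref{Thm:OurDisjointness}. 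Two small points to tidy: take the net centers inside $\mathcal{U}_{\mathcal{D}}$ (at worst doubling the radius) so that $|u_k'|\le K_{\mathcal{D}}$ holds for the maximal-inequality step, and note that $\kappa_n\ge 0$ because $\E\sum_k u_k\varepsilon_k=0$ for each fixed $\mathbf{u}$. Your converse direction (Sudakov minoration at a scale where $h_2(\mathcal{U}_{\mathcal{D}},\delta)>0$) is exactly the paper's argument.
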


\begin{rmk}
Theorem \ref{Thm:Furstenberg} establishes a type of qualitative relationship between asymptotic mean width and entropy: for a given family of dynamical models, they are either both zero or both positive.
In general one cannot expect a more quantitative relationship between asymptotic mean width and entropy. 
While it is possible to provide upper and lower bounds on $\kappa_n(\mathcal{D} : \boldsymbol{\varepsilon})$ in terms of $\ell_2$ covering numbers (as in the proof of Theorem \ref{Thm:Furstenberg}), additional care must be taken when passing to the limits to obtain the mean width and the entropy. As it turns out, the presence of these limits in the definitions precludes any more quantitative dependence between these quantities.

One way to see that no quantitative dependence is possible between these quantities is to observe that the entropy $h(\mathcal{D})$ is invariant under any continuous, invertible 
change of coordinates 
(see \cite[p. 167]{Walters1982}), whereas the asymptotic mean width
$\kappa_{G}(\mathcal{D})$ is not invariant under such operations.  
For example, the asymptotic mean width scales linearly  
if the observation functions $f_{\theta}$ are multiplied by a fixed constant, but the 
entropy remains unchanged.
\end{rmk}

Let $\gamma_2$ denote the $\ell$-distortion in the special case that $\ell$ is the squared loss. Note that 
$\gamma_2(\mathbf{U},\mathbf{V})^{1/2}$ is in fact a metric on the space of $\R$-valued stationary 
stochastic processes.   As the squared loss is a Bregman divergence, minimum risk fitting of a zero
entropy family will converge to the optimal parameter set for the signal by Theorem \ref{Thm:SPEC}.
The next theorem extends this result to the case where the mean width of the family is zero.

\begin{thm} 
\label{Thm:LSConsistency}
Let $\mathbf{Y} = \mathbf{V}+ \boldsymbol{\varepsilon}$, where 
$\mathbf{V}$ is ergodic and $\boldsymbol{\varepsilon}$ is an i.i.d. process with mean zero and finite variance. 
If $\kappa(\mathcal{D} : \boldsymbol{\varepsilon}) = 0$, then any sequence 
of least squares estimators converges almost surely to 
$\Theta_2(\mathbf{V})$. 
\end{thm}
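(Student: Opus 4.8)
The plan is to exploit the algebraic structure of the squared loss to split the empirical risk of $\mathbf{Y}$ into the empirical risk against the signal, a linear ``cross'' term, and a term that is constant in the parameters. Writing $g_k = f_\theta \circ T_\theta^k(x)$ and expanding $(g_k - V_k - \varepsilon_k)^2$, one obtains
\[
R_n(\theta:x) \ = \ R_n^V(\theta:x) - B_n(\theta:x) + C_n,
\]
where $R_n^V(\theta:x) = \tfrac1n\sum_{k<n}(g_k - V_k)^2$ is the empirical risk for fitting the signal $\mathbf{V}$ directly, $B_n(\theta:x) = \tfrac2n\sum_{k<n}g_k\varepsilon_k$ is the overfitting term, and $C_n = \tfrac2n\sum_{k<n}V_k\varepsilon_k + \tfrac1n\sum_{k<n}\varepsilon_k^2$ does not depend on $(\theta,x)$. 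Since $\boldsymbol{\varepsilon}$ is i.i.d.\ with mean zero and independent of the ergodic process $\mathbf{V}$, the ergodic theorem gives $C_n\to\sigma^2 := \E\varepsilon_0^2$ almost surely. The goal then reduces to showing that the least squares estimators $\hat\theta_n$, which asymptotically minimize $R_n$, also asymptotically minimize $R_n^V$; once this is established, Theorem \ref{Thm:GEN} applied to the signal process $\mathbf{V}$ with the squared loss yields convergence to $\Theta_2(\mathbf{V})$.

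The crucial estimate is that the overfitting term vanishes uniformly: $\sup_{\theta,x}B_n(\theta:x)\to 0$ almost surely. This is where the hypothesis $\kappa(\mathcal{D}:\boldsymbol{\varepsilon})=0$ enters. Writing $\Psi_n = \sup_{\theta,x}\sum_{k<n} f_\theta\circ T_\theta^k(x)\,\varepsilon_k$, so that $\sup_{\theta,x}B_n = \tfrac2n\Psi_n$ and $\E\Psi_n = \kappa_n(\mathcal{D}:\boldsymbol{\varepsilon})$, I would verify the subadditivity $\Psi_{n+m}\le \Psi_n + \Psi_m\circ\sigma^n$, where $\sigma$ is the shift on the noise sequence; this is the same subadditivity that produces the limit in (\ref{MW}), and it follows from writing $T_\theta^{n+j}(x) = T_\theta^j(T_\theta^n x)$ and taking the supremum over the new initial state. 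Since $\sigma$ is measure preserving and ergodic on the i.i.d.\ sequence $\boldsymbol{\varepsilon}$, Kingman's subadditive ergodic theorem upgrades the convergence $\tfrac1n\E\Psi_n\to\kappa(\mathcal{D}:\boldsymbol{\varepsilon})$ to almost sure convergence $\tfrac1n\Psi_n\to\kappa(\mathcal{D}:\boldsymbol{\varepsilon})=0$, giving $\sup_{\theta,x}B_n\to 0$ a.s.

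With $m_n := \inf_{\theta,x}R_n(\theta:x)$ and $m_n^V := \inf_{\theta,x}R_n^V(\theta:x)$, the uniform bound yields $m_n\ge m_n^V + C_n - \sup_{\theta,x}B_n$, and letting $n\to\infty$ gives $\lim m_n\ge \lim m_n^V + \sigma^2$. For the reverse inequality I would pass to the population level: by the variational characterization underlying Theorem \ref{Thm:GEN}, $\lim m_n = \min_\theta\min_{\mathbf{U}\in\mathcal{Q}_\theta}\gamma_2(\mathbf{U},\mathbf{Y})$ and $\lim m_n^V = \min_\theta\min_{\mathbf{U}\in\mathcal{Q}_\theta}\gamma_2(\mathbf{U},\mathbf{V})$; adding an independent copy of $\boldsymbol{\varepsilon}$ to any joining of $\mathbf{U}$ and $\mathbf{V}$ produces a joining of $\mathbf{U}$ and $\mathbf{Y}$, whence $\gamma_2(\mathbf{U},\mathbf{Y})\le \gamma_2(\mathbf{U},\mathbf{V})+\sigma^2$ for every $\mathbf{U}$ and thus $\lim m_n\le \lim m_n^V + \sigma^2$. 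Combining the two inequalities gives the identity $\lim m_n = \lim m_n^V + \sigma^2$. Finally, from $\inf_x R_n^V(\hat\theta_n:x)\le \inf_x R_n(\hat\theta_n:x) + \sup_{\theta,x}B_n - C_n$ together with $\inf_x R_n(\hat\theta_n:x)\to\lim m_n$ (the defining property of least squares estimators) and the identity just proved, one obtains $\inf_x R_n^V(\hat\theta_n:x)\to\lim m_n^V$. This is precisely the statement that $\{\hat\theta_n\}$ satisfies (\ref{LSES}) for $\mathbf{V}$; since the conclusion of Theorem \ref{Thm:GEN}(1) depends only on this property, it applies and gives $\hat\theta_n\to\Theta_2(\mathbf{V})$ a.s.

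The main obstacle, and the point requiring the most care, is a sign asymmetry: one is tempted to prove the two-sided bound $\sup_{\theta,x}|B_n|\to 0$, but controlling $\inf_{\theta,x}B_n = -\sup_{\theta,x}\sum_{k<n} g_k(-\varepsilon_k)$ would require $\kappa(\mathcal{D}:-\boldsymbol{\varepsilon})=0$, which for asymmetric noise need not follow from $\kappa(\mathcal{D}:\boldsymbol{\varepsilon})=0$. The resolution is that only the one-sided (overfitting) bound is genuinely needed: the mean width controls it empirically, while the opposite comparison between the limiting risks is supplied for free by the population-level joining inequality $\gamma_2(\mathbf{U},\mathbf{Y})\le\gamma_2(\mathbf{U},\mathbf{V})+\sigma^2$, which holds with no entropy or mean-width hypothesis. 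I expect the rest to be routine, modulo verifying the integrability needed to invoke Theorem \ref{Thm:GEN} for both $\mathbf{Y}$ and $\mathbf{V}$, which follows from the finite variance of $\boldsymbol{\varepsilon}$ and of $\mathbf{V}$ together with the bound $|f_\theta|\le K_\mathcal{D}$.
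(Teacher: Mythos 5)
Your argument is correct, but it takes a genuinely different route from the paper's. The paper applies Theorem \ref{Thm:GEN} once, to $\mathbf{Y}$, and then identifies the limit set entirely at the population level: for any joining $[\mathbf{U},\mathbf{Y}]$ it builds the relatively independent joining with $[\mathbf{V},\boldsymbol{\varepsilon}]$ so that $\mathbf{Y}=\mathbf{V}+\boldsymbol{\varepsilon}$ a.s., expands the square, and kills the cross term $\mathbb{E}[U_0\varepsilon_0]$ using the variational characterization of the mean width (Theorem \ref{Thm:KappaVar}, itself an instance of the tracking machinery); the reverse inequality comes from the same explicit independent-noise joining you use, yielding $\min_{\mathbf{U}\in\mathcal{Q}_\theta}\gamma_2(\mathbf{U},\mathbf{Y})=\min_{\mathbf{U}\in\mathcal{Q}_\theta}\gamma_2(\mathbf{U},\mathbf{V})+\mathbb{E}[\varepsilon_0^2]$ and hence $\Theta_2(\mathbf{Y})=\Theta_2(\mathbf{V})$. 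You instead work at the trajectory level: you control the empirical overfitting term $\sup_{\theta,x}\frac{2}{n}\sum_k f_\theta\circ T_\theta^k(x)\,\varepsilon_k$ almost surely via Kingman's subadditive ergodic theorem, transfer the asymptotic-minimization property from $R_n$ to $R_n^V$, and then apply Theorem \ref{Thm:GEN} to the signal $\mathbf{V}$. Your one-sided treatment of the cross term is well taken --- indeed $\kappa(\mathcal{D}:\boldsymbol{\varepsilon})=0$ literally gives only $\mathbb{E}[U_0\varepsilon_0]\le 0$ in a joining, and only that direction is needed --- and your route is somewhat more self-contained, bypassing Theorem \ref{Thm:KappaVar}. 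What it costs is a mild strengthening of Theorem \ref{Thm:GEN}: your $\hat\theta_n$ are functions of $\mathbf{Y}$, not of $\mathbf{V}$, so you must read the tracking theorem as a pathwise statement (any sequence asymptotically minimizing the empirical risk along a $\nu$-typical trajectory converges to $\Theta_{min}$, regardless of how it was produced). That reading is valid for the results of \cite{McGoffNobel}, but it is a step the paper's proof avoids by applying Theorem \ref{Thm:GEN} only to $\mathbf{Y}$. Both approaches need $\mathbb{E}V_0^2<\infty$, which you correctly flag as implicit in the integrability hypotheses.
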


In our final result of this section, we establish the consistency of least squares estimation for a family of transformations on a compact state space in $\R^d$ where each observation function is the identity. 
Suppose $\cX \subset \R^d$ is compact and $\{ T_{\theta} : \theta \in \Theta\}$ is a family 
of transformations on $\cX$ such that $\Theta$ is a compact metric space and $(\theta,x) \mapsto T_{\theta}(x)$ is continuous.
Further, suppose that $Y_k = T_{\theta^*}^k(X) + \varepsilon_k$ where
$X$ is distributed according to an ergodic measure $\mu \in \mathcal{M}(\mathcal{X},T_{\theta^*})$, and 
$\boldsymbol{\varepsilon} = (\varepsilon_k)_{k \geq 0}$ is i.i.d. with mean zero and finite variance 
and is independent of $X$.

\begin{cor} \label{Cor:Rd}
If the topological entropy of $T_{\theta}$ is zero for all $\theta \in \Theta$, then any sequence 
of least squares estimators converges almost surely to the set
$
\{ \theta \in \Theta : \mu(T_{\theta} = T_{\theta^*}) = 1 \}
$.
\end{cor}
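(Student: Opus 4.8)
The plan is to cast the corollary as an instance of the signal-plus-noise theory by taking $\mathcal{D} = \{(T_\theta, \Id) : \theta \in \Theta\}$, with $\Id : \cX \to \R^d$ the identity observation. Conditions (D1)--(D3) are immediate: $\Theta$ is a compact metric space, $(\theta,x) \mapsto T_\theta(x)$ is continuous by hypothesis, and the identity observation is jointly continuous and bounded on the compact set $\cX$. Because the observations are now $\R^d$-valued, I would first record that the entropy theory of Section \ref{Sect:EntropyDef} and the consistency results carry over with $\R$ replaced by $\cX \subseteq \R^d$ under the Euclidean metric $\rho$: the hypothesis $\lim_{r\searrow 0} r\log N(\cX,r,\rho) = 0$ of the Remark following Theorem \ref{Thm:EntropyEquality} holds since $N(\cX,r,\rho) = O(r^{-d})$, and the squared loss decomposes as $\|u-v\|^2 = \sum_i (u_i - v_i)^2$, so Theorem \ref{Thm:SPEC} applies without change.

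The first substantive step is to verify $h(\mathcal{D}) = 0$. For each fixed $\theta$ the orbit map $\Phi_\theta : x \mapsto (T_\theta^k(x))_{k\geq 0}$ is continuous and injective---injectivity is forced by the zeroth coordinate returning $x$---so it is a homeomorphism of the compact space $\cX$ onto $\mathcal{U}_{\mathcal{D}_\theta}$ that conjugates $T_\theta$ with the shift $\tau$. Since topological entropy is a conjugacy invariant, $h(\mathcal{D}_\theta)$ equals the topological entropy of $T_\theta$, which is zero by hypothesis. Applying Lemma \ref{Lemma:VarPrin} to each singleton family $\mathcal{D}_\theta$ and then to $\mathcal{D}$ yields
\[
h(\mathcal{D}) = \sup_{\mathbf{U} \in \mathcal{Q}_{\mathcal{D}}} h(\mathbf{U}) = \sup_{\theta} \sup_{\mathbf{U} \in \mathcal{Q}_\theta} h(\mathbf{U}) = \sup_\theta h(\mathcal{D}_\theta) = 0.
\]

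With $h(\mathcal{D}) = 0$, I would invoke Theorem \ref{Thm:SPEC}(2). The signal process $\mathbf{V} = (T_{\theta^*}^k(X))_{k\geq 0}$ belongs to $\mathcal{Q}_{\theta^*}$ and is ergodic, being the image of the ergodic system $(\cX, T_{\theta^*}, \mu)$; conditions (C1)--(C3) hold from boundedness of $\cX$ and finiteness of $\E\|\varepsilon_0\|^2$. For the squared loss and mean-zero noise one computes $\E\,\ell(u, v+\varepsilon_0) = \|u-v\|^2 + \E\|\varepsilon_0\|^2 \geq \E\|\varepsilon_0\|^2 = \E\,\ell(0,\varepsilon_0)$, with equality iff $u = v$. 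Hence Theorem \ref{Thm:SPEC}(2) shows that every sequence of least squares estimators converges almost surely to $\{\theta \in \Theta : \mathbf{V} \in \mathcal{Q}_\theta\}$.

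The main obstacle is the final identification $\{\theta : \mathbf{V} \in \mathcal{Q}_\theta\} = \{\theta : \mu(T_\theta = T_{\theta^*}) = 1\}$. For $\supseteq$, if $T_\theta = T_{\theta^*}$ holds $\mu$-almost everywhere then $T_\theta^{-1}B$ and $T_{\theta^*}^{-1}B$ differ by a $\mu$-null set for every Borel $B$, so $\mu$ is $T_\theta$-invariant; an induction using this invariance shows $T_\theta^k X = T_{\theta^*}^k X$ almost surely for all $k$, whence $\mathbf{V} \in \mathcal{Q}_\theta$. For the reverse inclusion, $\mathbf{V} \in \mathcal{Q}_\theta$ supplies a $T_\theta$-invariant $\nu$ with $(Z, T_\theta Z) \stackrel{d}{=} (X, T_{\theta^*} X)$; matching zeroth marginals gives $\nu = \mu$, and integrating the indicator of the graph $\{(x,y) : y = T_\theta(x)\}$---a closed, hence Borel, set by continuity of $T_\theta$---against both laws forces $\mu(T_\theta = T_{\theta^*}) = 1$. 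The delicate points are precisely the preservation of invariance of $\mu$ under the $\mu$-a.e.-equal map $T_\theta$ and the measurability of this graph event.
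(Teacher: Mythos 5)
Your proof is correct and takes essentially the same route as the paper's: establish $h(\mathcal{D})=0$ from the pointwise zero-entropy hypothesis (via Lemma \ref{Lemma:VarPrin}), invoke Theorem \ref{Thm:SPEC}(2) for the squared loss, and then identify $\{\theta : \mathbf{V} \in \mathcal{Q}_{\theta}\}$ with $\{\theta : \mu(T_{\theta}=T_{\theta^*})=1\}$ by matching the laws of $(X, T_{\theta}X)$ and $(V_0,V_1)$. You simply spell out in more detail the steps the paper labels ``easily seen'' and ``obvious,'' including the $\R^d$-valued extension that the paper defers to its discussion section.
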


The limit set in Corollary \ref{Cor:Rd} contains $\theta^*$ and serves as the natural identifiability class of $\theta^*$ in this setting.

\section{Examples of Dynamical Models} 
\label{Sect:PositiveExamples}

In order to complement the main results of this paper, which are of a general nature, we present here several specific examples of families of dynamical models satisfying (D1)-(D3) 
that capture regularities of interest. Under some additional assumptions, these families have entropy $h(\mathcal{D})=0$, 
and therefore all of our main results apply.
In a number of cases, these or similar families have been fit to data by applied scientists (e.g. \cite{Brackley2010,Letham2016,McGoff_GRNs,Turchin2013}), 
albeit without any theoretical guarantees of consistency.

\begin{example}(Toral rotations and almost periodicity)
Let the state space $\mathcal{X}$ be the $d$-dimensional torus $\mathbb{T}^d$, which is the direct product of $d$ circles, $\mathbb{T}^d = S^1 \times \dots \times S^1$. For a vector $\alpha \in \mathbb{T}^d$, define the transformation $R_{\alpha} : \mathbb{T}^d \to \mathbb{T}^d$ to be the rotation of $\mathbb{T}^d$ by the angle vector $\alpha$, i.e. $R_{\alpha}(x) = x+\alpha$ (addition in $\mathbb{T}^d$). Then let $\mathcal{F} \subset C(\mathbb{T}^d)$ be a compact set of continuous functions from $\mathbb{T}^d$ to $\R$ (with respect to the topology induced by the supremum norm). Let $\Theta = \mathbb{T}^d \times \mathcal{F}$, and define the family of dynamical models $\mathcal{D} = \{ (R_{\alpha},f) : (\alpha,f) \in \Theta \}$. With these definitions, $\mathcal{D}$ is a continuous family of dynamical models, and $h(\mathcal{D}) = 0$. Fitting this family to a process amounts to looking for periodic or ``almost periodic" (also known as ``quasi-periodic") structure in the observations. Intuitively, one is looking for up to $d$ independent ``periods" in a process. An observed process would have $d$ independent ``periods" if there were $d$ periodic processes with incommensurate periods and the observed process is a function of all $d$ of these periodic processes.

As a specific example of a setting in which such models might arise, one may consider restricted classes of dynamic gene regulatory networks that exhibit periodic behavior. 
Inference of gene regulatory networks from observed data is considered an important problem in systems biology \cite{Marbach2012}. In recent years, it has become increasingly feasible for experimentalists to assay the abundance of all the genes in a given system with regular frequency over time. In such cases, one would like to infer the structure of the underlying network from the observed gene expression dynamics \cite{McGoff_GRNs}.  

In many situations, one would expect gene regulatory networks to be zero entropy systems. In particular, the networks studied in chronobiology should exhibit periodic dynamics by definition. Examples of such systems include the cell cycle and circadian oscillators.
\end{example}

\begin{example}(Subcritical logistic family and ecology)
Since at least the early work of May \cite{May1974}, simple parametric families of dynamical systems have been used by ecologists as models of the population dynamics of many species \cite{Levin2009}. 
In many instances, various types of deterministic models have been fit to ecological data (e.g. \cite{Turchin2013}).

The prototypical family in this context is the logistic family, which may be parametrized as follows. Consider the state space $\mathcal{X} = [0,1]$ and the family of maps $T_{a} : [0,1] \to [0,1]$, where $T_a(x) = a x (1-x)$ for $a \in [0,4]$. If we restrict $a$ to the region $[0,3.5]$, then the family of dynamical models will have zero entropy. This situation is thought to occur in many naturally occurring populations (see results and discussion from \cite{Hassell1976}). In examples such as these, the state variable $x$ typically represents the (rescaled) population size. The overall structure of the logistic family captures the idea that the reproductive rate depends on the density of the population, taking into account effects such as competition for limited resources. Given observations of population size over time, one may try to fit these dynamical models to the observations to identify the parameter $a$.
\end{example}

\begin{example}(Symbolic dynamics and quasicrystals)
Symbolic dynamical systems, also known as subshifts, are a useful family of models that
arise in the study of dynamical systems through discretizing of the state space. 
Informally, if $ T : X \to X$ is a dynamical system and $\{A_1,\dots,A_N\}$ is a finite partition of $X$, then the associated symbolic system 
consists of the label sequences $\{ (\pi(T^k x) )_k : x \in X \}$ under the left shift map, where 
$\pi: X \to \{1,\dots, N\}$ is defined by the relation $x \in A_{\pi(x)}$.  
Symbolic systems have been widely studied for their own sake \cite{Lind1995}, for the purpose of understanding other dynamical systems \cite{Bowen1975}, and for their connections to other disciplines, e.g., physics \cite{Ruelle2004}. 
Due to their combinatorial nature, they can be used to model a variety of regularities 
in physical systems. For example, they have been used in communications, coding and information theory to capture the rules by which binary strings should be encoded on magnetic tapes and compact discs in order to minimize errors \cite{Lind1995}.

As another recent example, symbolic dynamical systems have recently been used by several researchers \cite{Damanik2015,Robinson1996,Solomyak1998} 
as a mathematical model of quasicrystals, which were discovered by 
Shechtman \cite{Shechtman1984}.
Quasicrystals are characterized by the presence of long-range aperiodic order, in contrast to crystals, which are characterized by 
long-range periodic order.  Long-range aperiodic order is found in substitution systems \cite{Queffelec2010}, 
which are constructed by enforcing a rigid hierarchical structure at all scales. Systems exhibiting this type of long-range order typically have zero entropy.
\end{example}

\section{Discussion of rates and related work} 
\label{Sect:RelatedWork}




The results of this paper have points of overlap with recent work in the statistics and
machine learning literature concerning estimation, forecasting, and prediction from
dependent observations.  While some of this work, for example
Morvai and Weiss \cite{Morvai2005_2, Morvai2005_3}, 
Nobel \cite{Nobel2006}, and Adams and Nobel \cite{Adams2010}, 
is focused on asymptotics for general ergodic observations, a number of papers provide rates of convergence or
finite sample bounds under more stringent assumptions.

Modha and Masry \cite{ModMas98}, Meir \cite{Meir00}, and Alquier and Wintenberger \cite{Alquier2012} 
establish oracle inequalities and finite sample bounds for predicting the next value of a stationary process.
Agarwal and Duchi \cite{Agarwal2013}, 
Kuznetsov and Mohri \cite{Kuznetsov2015,Kuznetsov2017,Kuznetsov2016}, 
and Zimin and Lampert \cite{Zimin2017}
establish finite sample performance bounds on the conditional risk of online learning algorithms
for predicting dependent time series.  Each of the papers cited above imposes mixing conditions
on the observations as well as regularity conditions on the loss function and model family of interest. 
Shalizi and Kontorovich \cite{Shalizi2013} consider learning mixtures of stationary processes, while
Kontorovich \cite{Kontorovich2012} studies statistical
estimation using finite automata with bounded memory.
Hang and Steinwart \cite{Hang14} obtain rates of convergence for empirical risk minimization
from $\alpha$-mixing observations, while Wong {\it et al.}\ \cite{Wong2016} establish finite sample bounds for
Lasso-based inference under $\beta$-mixing conditions.
In another direction, Rakhlin {\em et al.}\ \cite{RST14} and Rakhlin and Sridharan \cite{Rakhlin2017} have 
established exponential inequalities for suprema of martingale difference sequences by using and extending
ideas from machine learning, including Rademacher complexity and deterministic regret inequalities.

As noted in the introduction, the problem of fitting dynamical models
differs from the inference problems above as both the observations {\it and} the models under study
can exhibit dynamical behavior and long-range dependence. 
Moreover, our principal results make
no assumptions concerning mixing properties of the observed process $\mathbf{Y}$, 
mixing properties or stationary distributions of the dynamical models $\mathcal{D}$, smoothness 
of the loss $\ell(\cdot,\cdot)$ (beyond lower semicontinuity), or the relationship between the observed
process and the family of models being fit.
This general setting enables us to study the asymptotic behavior of minimum risk estimation 
for dynamical models in a variational framework where the roles of the
observed process, the loss, and (most critically) the model family are clear and easy to understand.

The results here provide a framework for, and initial progress towards, the
detailed analyses of specific problems and model families
that might lead to rates of convergence, or finite sample performance bounds.
It is evident from the papers above that stronger results, e.g., rates of convergence,
will require substantially stronger assumptions, including mixing conditions (with geometric or
polynomial rates) on the observed process, smoothness (possibly with convexity) of the loss 
function, and stronger, covering-based complexity constraints on the family of dynamical models.  
If mixing type conditions are required for the dynamical models themselves, but these would require
additional assumptions, as mixing conditions typically hold only for distinguished invariant 
measures or observation functions.

A number of the papers cited above make use of exponential probability bounds, typically Azuma-Hoeffding type 
inequalities, to control error terms that are sums of martingale differences.  Martingale differences
to not arise in the theoretical analysis of the paper, but we note that there are some uses of reverse martingale 
methods in the dynamics literature \cite{Liverani1996}.
Investigating martingale approaches to the problems considered here represents an interesting direction 
for future research.

Our work is also related to a line of research concerning least squares estimation of individual 
sequences from noisy observations, see for example \cite{Pollard2006,VanDeGeer1990,Wu1981}.  
Pollard and Radchenko \cite{Pollard2006} use empirical process theory to establish 
consistency and asymptotic normality of least squares estimation for individual sequences from signal plus noise. 
In the present work, we consider sets of individual sequences that arise from a continuous family of dynamical models, as in (\ref{Eqn:U}), and we are interested in inference of a dynamical invariant parameter (i.e. $\theta$), rather than the signal sequence itself. 

Furstenberg's original work on joinings \cite{Furstenberg1967} includes an application of joinings to a nonlinear filtering problem. 
Beyond this application, we are not aware of other uses of joinings in the literature on statistical inference.
Ornstein and Weiss \cite{OrnsteinWeiss1990} studied the estimation of a stochastic process from its samples. 
They proposed an inference procedure, based on matching $k$-block frequencies, and characterize when it produces 
consistent estimates of the observed stochastic process in the $d$-bar metric. 

Some of Furstenberg's original results are extended in recent work of Lev, Peled, and Peres \cite{Lev2015}. 
Given an infinite sequence equal to a target signal plus noise, they consider the problem of detecting whether the signal is non-zero, 
and the problem of recovering the signal from the given sequence.  Target
sequences are assumed to belong to a known family (as in \cite{Pollard2006}), and
their analysis places no restrictions (beyond measurability) on the 
detection and filtering procedures, which can be functions of the entire sequence of observations.  

Finally, we mention that statistical inference in the context of dynamical systems has been considered in a variety of subject areas; see the survey \cite{McGoffSurvey2015} for a broad overview and references.  Dynamical systems in the observational noise setting have been studied in \cite{Lalley1999,LalleyNobel2006,McGoff2015}, and statistical prediction in the context of dynamical systems has been considered in \cite{Hang2016,Hang2016_2,Steinwart2009,Viswanath2013}. 

\subsection{Generalizations and future work}

Generalization of all of the definitions and results of the paper to $\R^d$-valued models and processes is straightforward, requiring only minor changes of notation.  We omit the details.
In a different direction, one could analyze families of dynamical models defined on a 
non-compact state space $\cX$ with uniformly bounded observation functions, 
requiring only measurability of the maps
$(\theta,x) \mapsto T_{\theta}(x)$ and $(\theta,x) \mapsto f_{\theta}(x)$. 
For families $\mathcal{D}$ of this more general type, the set $\mathcal{U}_{\mathcal{D}}$ 
of associated sequences would not necessarily be a closed (hence compact) subset of $\R^{\N}$, 
and in this case one needs to consider the closure of $\mathcal{U}_{\mathcal{D}}$, 
along with all the stationary processes supported on this set.
The analysis here can be carried out in this more general setting, but the corresponding 
results are difficult to interpret in the context of the original inference problem.

%
%
%

\section{Optimal tracking and proof of Theorem \ref{Thm:GEN}} \label{Sect:Connections}

In this section we discuss the connections between fitting dynamical models and the optimal tracking problem studied in \cite{McGoffNobel}. In particular, we construct a single dynamical system that captures the important features of the family $\mathcal{D}$ of dynamical models, and we show how this system may be analyzed in the context of optimal tracking.

\subsection{Optimal tracking}

The tracking problem for dynamical systems concerns two systems: a model system $T : \mathcal{Z} \to \mathcal{Z}$, where $\mathcal{Z}$ is compact and metrizable and $T$ is continuous, and an observed system $S : \mathcal{Y} \to \mathcal{Y}$, where $\mathcal{Y}$ is a separable completely metrizable space and $S$ is Borel measurable. Given an initial segment of a trajectory $y, S(y), \dots, S^{n-1}(y)$ from the observed system, one seeks a corresponding initial condition $z_n$ such that the trajectory $z_n, T(z_n), \dots, T^{n-1}(z_n)$ from the model system ``tracks" the given trajectory from the observed system. An optimal tracking trajectory is chosen by minimizing an additive cost functional 
\begin{equation*}
\sum_{k = 0}^{n-1} c\bigl( S^ky, T^kz \bigr),
\end{equation*}
where $c : \mathcal{Y} \times \mathcal{Z} \to \R$ is a fixed lower semicontinuous cost function.

The results from \cite{McGoffNobel} consider the situation when the observed initial condition $y$ is drawn from an ergodic measure $\nu \in \mathcal{M}(\mathcal{Y},S)$ and $\sup_z |c(y,z)|$ is bounded above by a function in $L^1(\nu)$.
Here we state a version of the previous results that is sufficient for our purposes, for which we require a bit more notation. Let $\Theta$ be a compact metrizable space, and let $\varphi : \mathcal{Z} \to \Theta$ be a continuous map satisfying $\varphi \circ T = \varphi$. We denote by $\mathcal{J}(\nu : \theta)$ the set of joinings of the process $\{S^k(Y_0)\}_{k \geq 0}$, where $Y_0 \sim \nu$, with any process of the form $\{T^k(Z_0)\}_{k\geq 0}$, where $Z_0 \sim \mu$ for some $\mu \in \mathcal{M}(\mathcal{Z},T)$ such that $\mu(\varphi^{-1}\{\theta\}) = 1$.

\begin{CorThm}[\cite{McGoffNobel}]
\label{Thm:Tracking}
Let  $T: \mathcal{Z} \to \mathcal{Z}$, $S: \mathcal{Y} \to \mathcal{Y}$, $c : \mathcal{Y} \times \mathcal{Z} \to \R$, $\nu$, $\Theta$, and $\varphi : \mathcal{Z} \to \Theta$ be as above. 
If $\hat{z}_n = \hat{z}_n(y,\dots,S^{n-1}y)$ and the following equality holds $\nu$ almost surely,
\begin{equation} \label{Eqn:Rage}
\lim_n  \frac{1}{n} \sum_{k = 0}^{n-1} c(S^k y, T^k \hat{z}_n) = \lim_n \inf_{z \in \mathcal{Z}} \frac{1}{n} \sum_{k = 0}^{n-1} c(S^ky, T^kz),
\end{equation}
then $\hat{\theta}_n = \varphi( \hat{z}_n)$ converges ($\nu$ almost surely) to the non-empty, compact set
\begin{equation}
\Theta_{min} = \argmin_{\theta \in \Theta} \min_{\mathcal{J}(\nu : \theta)} \mathbb{E} \bigl[ c(Y_0,Z_0) \bigr].
\end{equation}
Furthermore, for any $\theta \in \Theta_{min}$, there exists $\hat{z}_n$ such that (\ref{Eqn:Rage}) holds and $\hat{\theta}_n = \varphi( \hat{z}_n)$ converges to $\theta$.
\end{CorThm}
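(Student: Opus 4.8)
The plan is to transfer the finite‑horizon tracking problem to the product dynamics on $\cY \times \cZ$, identify the limiting optimal cost with a minimum over joinings, and then read off the behaviour of $\hat\theta_n = \varphi(\hat z_n)$ from the $\cZ$‑marginals of the empirical measures taken along (near‑)optimal trajectories. First I would show that the right‑hand limit in (\ref{Eqn:Rage}) exists and is constant $\nu$‑a.s. Writing $F_n(y) = \inf_z \sum_{k=0}^{n-1} c(S^k y, T^k z)$, splitting a trajectory at time $m$ and taking the infimum over the intermediate point $T^m z$ gives $F_{m+n}(y) \ge F_m(y) + F_n(S^m y)$, so $(F_n)$ is superadditive; the domination $\sup_z |c(y,z)| \in L^1(\nu)$ makes each $F_n$ integrable, and Kingman's subadditive ergodic theorem applied to $-F_n$, together with ergodicity of $\nu$, yields $\tfrac1n F_n \to \gamma^*$ a.s.\ for a constant $\gamma^*$.

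Next I would prove the variational identity $\gamma^* = \min_\lambda \int c\, d\lambda$, the minimum running over stationary couplings $\lambda$ of the process $\{S^k Y_0\}$ with $T$‑invariant processes, so that $\int c\, d\lambda = \E[c(Y_0,Z_0)]$. To keep the relevant shift continuous (recall $S$ is only Borel) I would realize the observed process on its sequence‑space model, where empirical measures along orbits have genuinely shift‑invariant weak$^*$ limits. For the upper bound I form $\Lambda_n = \tfrac1n \sum_{k=0}^{n-1} \delta_{(S^k y,\, T^k \hat z_n)}$; the $\cY$‑marginals converge to $\nu$ by the ergodic theorem, giving tightness, while compactness of $\cZ$ handles the second coordinate, so a subsequence converges weak$^*$ to a joining $\lambda$, and lower semicontinuity of $c$ (with a truncation using the $L^1(\nu)$ bound) gives $\int c\, d\lambda \le \liminf \int c\, d\Lambda_n = \gamma^*$ by hypothesis (\ref{Eqn:Rage}). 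For the reverse bound I would use generic points for joinings: given a $\nu$‑generic $y$ and any ergodic joining $\lambda'$, there is a companion $z$ for which $(y,z)$ is $\lambda'$‑generic, whence $\tfrac1n F_n(y) \le \tfrac1n \sum c(S^k y, T^k z) \to \int c\, d\lambda'$, and the general case follows by ergodic decomposition.

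With the identity in hand I would extract the conclusion from the $\cZ$‑marginals. Since $\varphi \circ T = \varphi$, the value $\varphi(T^k \hat z_n) = \hat\theta_n$ is constant along the orbit, so the $\cZ$‑marginal of $\Lambda_n$ is supported on $\varphi^{-1}\{\hat\theta_n\}$. Passing to a subsequence with $\hat\theta_{n_j} \to \theta^*$ (possible as $\Theta$ is compact) and $\Lambda_{n_j} \to \lambda$, continuity of $\varphi$ forces $\varphi_*\mu = \delta_{\theta^*}$ for the $\cZ$‑marginal $\mu$, i.e.\ $\mu(\varphi^{-1}\{\theta^*\}) = 1$; combined with $\int c\, d\lambda \le \gamma^*$ this shows $\lambda \in \cJ(\nu:\theta^*)$ is optimal, so $\theta^* \in \Theta_{min}$. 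As every subsequential limit of $\hat\theta_n$ lies in $\Theta_{min}$, we get $\hat\theta_n \to \Theta_{min}$; non‑emptiness and compactness of $\Theta_{min}$ follow from attainment of the minimum (l.s.c.\ cost over a weak$^*$‑compact set of joinings) and closedness under limits. For the final assertion I would fix $\theta \in \Theta_{min}$, select an ergodic optimal joining in $\cJ(\nu:\theta)$, and run the generic‑point argument with a measurable selection to produce, for $\nu$‑a.e.\ $y$, estimates $\hat z_n$ satisfying (\ref{Eqn:Rage}) with $\varphi(\hat z_n) \equiv \theta$.

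The main obstacle will be the reverse inequality in the variational identity, namely producing for a fixed $\nu$‑generic $y$ a companion point $z$ making $(y,z)$ generic for a prescribed joining; this is exactly where the joining machinery is indispensable. It must be combined carefully with the fact that $S$ is only Borel and $\cY$ is merely Polish, which forces the sequence‑space representation and the tightness and truncation arguments needed to pass to weak$^*$ limits when $c$ is only lower semicontinuous.
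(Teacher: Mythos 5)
This statement is Theorem~\ref{Thm:Tracking}, which the paper does not prove at all: it is imported verbatim from \cite{McGoffNobel}, so there is no in-paper proof to compare against. Your outline is, as far as I can tell, a faithful reconstruction of the strategy of that reference: superadditivity plus Kingman for the existence of the limiting optimal cost, identification of that limit with $\min_\lambda \int c\, d\lambda$ over joinings (upper bound via weak$^*$ limits of empirical measures along near-optimal orbits in the sequence-space model, lower bound via companion/generic points), and then reading off $\Theta_{min}$ from the $\cZ$-marginals using $\varphi \circ T = \varphi$. The skeleton is sound and the subadditivity computation, the tightness argument, and the $\varphi_*\mu = \delta_{\theta^*}$ step are all correct.

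Two points need more care than your sketch gives them. First, in the reverse inequality you invoke weak$^*$ genericity of $(y,z)$ for $\lambda'$ and then assert $\tfrac1n \sum_k c(S^k y, T^k z) \to \int c\, d\lambda'$; for a cost $c$ that is only lower semicontinuous, weak$^*$ genericity yields only $\liminf_n \tfrac1n \sum_k c \ge \int c\, d\lambda'$, which is the wrong direction for bounding $F_n$ from above. You must instead apply Birkhoff's ergodic theorem directly to $c \in L^1(\lambda')$ along the $S\times T$-orbit, obtaining a full-$\lambda'$-measure set of points where the $c$-averages converge, and then project that set to $\cY$. Second, in the final assertion the estimators $\hat z_n$ are required to be functions of the finite segment $(y,\dots,S^{n-1}y)$ only, whereas the companion point $z=z(y)$ produced by the generic-point argument depends on the entire orbit; promoting it to a legitimate finite-horizon estimator (e.g., by near-minimizing the $n$-step cost over $\varphi^{-1}$ of a slowly shrinking neighborhood of $\theta$, with a measurable selection) is a genuine step that your one-clause remark glosses over. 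Both gaps are repairable and are handled in \cite{McGoffNobel}, but as written they are the places where your argument is incomplete.
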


\subsection{Proof of Theorem \ref{Thm:GEN} }

To be begin the proof, we describe how fitting a continuous family of dynamical models to an observed stochastic process can be cast as a tracking problem. 
As an important first step, we define a single dynamical system that encapsulates the entire family of dynamical models. Consider the state space
\begin{equation*}
\mathcal{Z} = \biggl\{ \Bigl( \theta , \, \bigl( f_{\theta} \circ T_{\theta}^k(x) \bigr)_{k \geq 0} \Bigr) : \theta \in \Theta, \, x \in \cX \biggr\} \subseteq \Theta \times \R^{\N},
\end{equation*}
and define the transformation $T : \mathcal{Z} \to \mathcal{Z}$ by 
$T(\theta, (u_k)_{k \geq 0}) = (\theta, (u_{k+1})_{k \geq 0})$, which is clearly continuous (in the product topology). 
We now establish some basic properties of the dynamical system $(\mathcal{Z},T)$ associated 
with the family $\mathcal{D}$ of dynamical models.

\begin{lemma} \label{Lemma:BigShift}
The set $\mathcal{Z}$ is a compact subset of $\Theta \times \R^{\N}$ when $\R^{\N}$ is equipped with the product topology, 
and the map $T$ is continuous. 
If $\mu$ is an ergodic element of $\mathcal{M}(\mathcal{Z},T)$, then there exists $\theta \in \Theta$ and an ergodic process $\mathbf{U} \in \mathcal{Q}_{\theta}$ with distribution $\nu$ such that $\mu = \delta_{\theta} \otimes \nu$. 
\end{lemma}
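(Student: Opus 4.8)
The plan is to treat the topological claim and the structural claim separately. For the first, I would introduce the map $\Phi : \Theta \times \cX \to \Theta \times \R^{\N}$ defined by $\Phi(\theta,x) = (\theta, (f_\theta \circ T_\theta^k(x))_{k \geq 0})$, so that $\cZ = \Phi(\Theta \times \cX)$. Since $\Theta \times \cX$ is compact, it is enough to show that $\Phi$ is continuous, for then $\cZ$ is the continuous image of a compact set and hence compact. Continuity of $\Phi$ into the product topology reduces to continuity of each coordinate map $(\theta,x) \mapsto f_\theta \circ T_\theta^k(x)$. I would prove by induction on $k$ that $(\theta,x) \mapsto T_\theta^k(x)$ is continuous: the case $k = 0$ is trivial, and the inductive step writes $T_\theta^k(x) = T_\theta(T_\theta^{k-1}(x))$ and composes the continuous map $(\theta,x) \mapsto (\theta, T_\theta^{k-1}(x))$ with the map $(\theta,y) \mapsto T_\theta(y)$, which is continuous by (D2); composing the result with (D3) gives continuity of $(\theta,x) \mapsto f_\theta \circ T_\theta^k(x)$. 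Continuity of $T$ is then immediate, since $T$ is the restriction to $\cZ$ of the product of the identity on $\Theta$ with the left shift $\tau$, and the identity $f_\theta \circ T_\theta^{k+1}(x) = f_\theta \circ T_\theta^k(T_\theta x)$ shows $T(\cZ) \subseteq \cZ$.

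For the structural claim, the first task is to identify $\theta$. The coordinate projection $\pi : \cZ \to \Theta$, $\pi(\theta,\mathbf{u}) = \theta$, satisfies $\pi \circ T = \pi$ because $T$ fixes the $\Theta$-coordinate, so $\pi$ is a $T$-invariant function and ergodicity of $\mu$ forces it to be $\mu$-a.e.\ equal to some constant $\theta$. Thus $\mu$ is concentrated on $\{\theta\} \times \R^{\N}$, and if $\nu$ denotes the pushforward of $\mu$ under the sequence projection $q : \cZ \to \R^{\N}$, $q(\theta,\mathbf{u}) = \mathbf{u}$, then $\mu = \delta_\theta \otimes \nu$. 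Since $q \circ T = \tau \circ q$ and $q$ restricts to a Borel isomorphism on the support of $\mu$ intertwining $T$ with $\tau$, the measure $\nu$ is $\tau$-invariant and the systems $(\cZ,T,\mu)$ and $(\R^{\N},\tau,\nu)$ are isomorphic; in particular $\nu$ is ergodic, so it is the law of an ergodic stationary process $\mathbf{U}$.

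It remains to show $\mathbf{U} \in \mathcal{Q}_\theta$, that is, that $\nu$ is realized through the observation map $\psi_\theta(x) = (f_\theta \circ T_\theta^k(x))_{k \geq 0}$ by some $T_\theta$-invariant measure on $\cX$. I expect this lifting step to be the main obstacle, since a priori $\nu$ is only known to be supported on $\psi_\theta(\cX)$ and need not be the image of an invariant measure. The plan is first to choose any Borel probability measure $\lambda$ on $\cX$ with $\psi_{\theta,*}\lambda = \nu$; such a $\lambda$ exists because $\psi_\theta$ is a continuous surjection of the compact space $\cX$ onto $\psi_\theta(\cX) \supseteq \supp(\nu)$, so the pushforward map on probability measures is onto. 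I would then form the averages $\mu_N = \frac{1}{N} \sum_{k=0}^{N-1} (T_\theta)_*^k \lambda$ and extract, by compactness of $\cM(\cX)$ in the weak$^*$ topology, a limit point $\mu' \in \cM(\cX, T_\theta)$ along a subsequence, which is $T_\theta$-invariant by the usual Krylov--Bogolyubov argument. The decisive point is that $\psi_\theta$ intertwines $T_\theta$ with $\tau$, so $\psi_{\theta,*}(T_\theta)_*^k \lambda = \tau_*^k \nu = \nu$ for every $k$ by the $\tau$-invariance of $\nu$; hence $\psi_{\theta,*}\mu_N = \nu$ for all $N$, and weak$^*$-continuity of $\psi_{\theta,*}$ gives $\psi_{\theta,*}\mu' = \nu$. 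Consequently the process $(f_\theta \circ T_\theta^k(X))_{k \geq 0}$ with $X \sim \mu'$ has law $\nu$, which shows $\mathbf{U} \in \mathcal{Q}_\theta$ and completes the proof.
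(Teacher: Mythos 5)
Your proof is correct, and while the compactness and continuity arguments coincide with the paper's (continuous image of the compact set $\Theta \times \cX$ under the coordinatewise-continuous map $(\theta,x) \mapsto (\theta, (f_\theta \circ T_\theta^k(x))_k)$), your treatment of the structural claim takes a genuinely different route. The paper introduces the skew product $R(\theta,x) = (\theta, T_\theta(x))$ on $\Theta \times \cX$, observes that $\pi$ is a factor map from $(\Theta\times\cX, R)$ onto $(\cZ,T)$, and invokes the standard fact that the induced map on \emph{invariant} measures is surjective (citing Denker--Grillenberger--Sigmund); an ergodic preimage $\eta$ then decomposes as $\delta_\theta \otimes \xi$ with $\xi \in \cM(\cX,T_\theta)$ ergodic, and pushing forward gives the conclusion. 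You instead first pin down $\theta$ directly via ergodicity of the $T$-invariant projection onto $\Theta$, and then build the required invariant measure on $\cX$ by hand: lift $\nu$ to an arbitrary (not necessarily invariant) $\lambda$ with $\psi_{\theta,*}\lambda = \nu$ using surjectivity of the pushforward for a continuous surjection of compact spaces, and then run Krylov--Bogolyubov averaging, noting that the intertwining $\psi_\theta \circ T_\theta = \tau \circ \psi_\theta$ together with $\tau$-invariance of $\nu$ preserves the identity $\psi_{\theta,*}\mu_N = \nu$ in the weak$^*$ limit. In effect you reprove the cited invariant-measure lifting lemma in the special case needed, trading a reference for a self-contained two-step argument (non-invariant lift plus averaging). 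The only substantive difference in the output is that your limit measure $\mu'$ need not be ergodic on $\cX$, whereas the paper's $\xi$ is; this is harmless, since the definition of $\cQ_\theta$ does not require ergodicity of the underlying invariant measure and the ergodicity of the process $\mathbf{U}$ asserted in the lemma follows, as you note, from the ergodicity of $\nu$ itself.
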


\begin{proof}
 By our hypotheses on the family $\mathcal{D}$, both the parameter space $\Theta$ and the state space $\cX$ are compact, and therefore $\Theta \times \cX$ is compact. Define the map $\pi : \Theta \times \cX \to \Theta \times \R^{\N}$ by 
 \begin{equation*}
  \pi(\theta,x) = \Bigl( \theta, \bigl( f_{\theta} \circ T_{\theta}^k(x) \bigr)_{k \geq 0} \Bigr).
 \end{equation*}
 It is clear from the definition of $\mathcal{Z}$ that $\mathcal{Z}$ is the image of $\Theta \times \cX$ under $\pi$. To show that $\mathcal{Z}$ is compact, it now suffices to check that $\pi$ is continuous.
 
 Let $\{(\theta_n,x_n)\}_{n \geq 1}$ be a sequence converging to $(\theta,x)$ in $\Theta \times \cX$. Let $K \in \N$. The continuity conditions (D2) and (D3) imply that for $0 \leq k \leq K$, 
 \begin{equation*}
  \lim_n f_{\theta_n} \circ T_{\theta_n}^k(x_n) = f_{\theta} \circ T_{\theta}^k(x).
 \end{equation*}
 As $K$ was arbitrary, we have shown that $\{\pi(\theta_n,x_n)\}_{n \geq 1}$ converges to $\pi(\theta,x)$ in $\Theta \times \R^{\N}$ in the product topology, and therefore $\pi$ is continuous.
 
 The left-shift $\tau : \R^{\N} \to \R^{\N}$ is continuous, and therefore $T = (\Id \times \tau)|_{\mathcal{Z}}$ is continuous.
 
 For the last statement of the lemma, define the map $R : \Theta \times \cX \to \Theta \times \cX$ as the skew-product over the identity, $R(\theta,x) = (\theta, T_{\theta}(x))$. By construction, we have $\pi \circ R = T \circ \pi$. Thus, $\pi$ is a factor map from $(\Theta \times \cX,T)$ onto $(\mathcal{Z},S)$. It follows that the push-forward map from $\mathcal{M}(\Theta \times \cX,R)$ to $\mathcal{M}(\mathcal{Z},T)$ (given by $\eta \mapsto \eta \circ \pi^{-1}$) is a surjection \cite[p. 19]{DGS}. 
 
 Now let $\mu \in \mathcal{M}(\mathcal{Z},T)$ be ergodic. Since the push-forward map from $\mathcal{M}(\Theta \times \cX,R)$ to $\mathcal{M}(\mathcal{Z},T)$ (given by $\eta \mapsto \eta \circ \pi^{-1}$) is a surjection, there exists an ergodic $\eta \in \mathcal{M}(\Theta \times \cX,R)$ such that $\eta \circ \pi^{-1} = \mu$. 
 Since $\proj_{\Theta} \circ T = \proj_{\Theta}$, the induced measure $\eta \circ  (\proj_{\Theta} \circ \pi)^{-1}$ on $\Theta$ must be invariant under the identity map. Also, it must be ergodic, since $\eta$ is ergodic. As the only ergodic measures for the identity map are the point masses, we see that there exists $\theta \in \Theta$ such that $\eta \circ (\proj_{\Theta} \circ \pi)^{-1} = \delta_{\theta}$. Then $\eta = \delta_{\theta} \otimes \xi$ for some ergodic measure $\xi \in \mathcal{M}(\mathcal{X},T_{\theta})$.  Finally, we conclude that $\mu = \eta \circ \pi^{-1} = \delta_{\theta} \otimes \nu$, where $\nu$ is the distribution of an ergodic process in $\mathcal{Q}_{\theta}$.
\end{proof}

We now proceed with the proof of Theorem \ref{Thm:GEN}.
The observed process $\mathbf{Y}$ gives rise to an observed dynamical system in the tracking problem, where $\mathcal{Y} = \R^{\N}$, $S : \mathcal{Y} \to \mathcal{Y}$ is the left shift $S( (u_k)_{k \geq 0}) = (u_{k +1})_{k \geq 0}$, and $\nu$ is the distribution of $\mathbf{Y}$ on $\R^{\N}$. Finally, as a cost function, we choose $c : \mathcal{Y} \times \mathcal{Z} \to \R$, where
\begin{equation*}
c\Bigl( \mathbf{v}, \bigl( \theta, \mathbf{u} \bigr) \Bigr) = \ell(u_0,v_0)
\end{equation*}
Then an application of Theorem \ref{Thm:Tracking} yields that
any sequence of minimum $\ell$-risk parameters $(\hat{\theta}_n)_{n \geq 1}$ converges almost surely to the set $\Theta_{\ell}(\mathbf{Y})$. Furthermore, this projection is nonempty and compact, and Theorem \ref{Thm:GEN} (2) holds. We have thus proved Theorem \ref{Thm:GEN}.

\section{Entropy and mean width (proofs)} \label{Sect:EntropyProofs}

In this section we study the notions of entropy and mean width for families of dynamical models. We begin with entropy.

\subsection{Entropy for families of dynamical models}

Recall that the pseudo-metrics $d_{n,p}(\cdot,\cdot)$ and the $\ell_p$ entropies $h_p(\mathcal{D})$ were defined in Section \ref{Sect:EntropyDef}. Additionally, we define $B_{n,p}(\mathbf{u},r)$ to be the $d_{n,p}(\cdot, \cdot)$-ball of radius $r$ centered at $\mathbf{u} \in \R^{\N}$.

Before proving the main results presented in Section \ref{Sect:EntropyDef}, we make a simple observation.
For $p \geq 1$, we have that $d_{n,p}(\mathbf{u},\mathbf{v}) \leq d_{n,\infty}(\mathbf{u},\mathbf{v})$. 
Hence, for any $\mathcal{U} \subset \R^{\N}$ and $\delta >0$, 
\begin{equation} \label{Eqn:RedTailed}
 N(\mathcal{U},\delta,d_{n,p}) \leq N(\mathcal{U},\delta,d_{n,\infty}).
\end{equation}

The following lemma, which bounds the cardinality of $d_{n,\infty}(\cdot,\cdot)$-separated sets that are contained in a single $d_{n,p}(\cdot,\cdot)$-ball, is used to prove Theorem \ref{Thm:EntropyEquality}. For notation, let $M^{\infty}_n(\mathcal{U},\delta)$ denote the maximum cardinality of any set in $\mathcal{U}$ that is $\delta$-separated with respect to $d_{n,\infty}(\cdot,\cdot)$.

\begin{lemma} \label{Lemma:L2Linfty}
Let $p \in [1,\infty)$, $K \geq 1$ and $\delta \in (0,1)$. Set $\epsilon = (\delta/2)^{(1+p)/p}$. Then for any $\mathbf{u} \in [-K,K]^{\N}$ and $n \geq 1$,
\begin{equation*}
 M_{n}^{\infty}(B_{n,p}(\mathbf{u},\epsilon),\delta) \leq  (3K/\delta)^{\delta n/2} \cdot 2^{H(\delta/2)n},
\end{equation*}
where $H(x) = -x \log x - (1-x) \log (1-x)$.
\end{lemma}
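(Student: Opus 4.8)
The plan is to bound a $d_{n,\infty}$-separated subset of the ball $B_{n,p}(\mathbf{u},\epsilon)$ by attaching to each of its points an injective combinatorial \emph{signature} and then counting the possible signatures. Since $d_{n,p}$ and $d_{n,\infty}$ depend only on the coordinates $0,\dots,n-1$, I would work with the first $n$ coordinates, which in the setting of interest lie in $[-K,K]$. Fix a $\delta$-separated set $\{\mathbf{v}^{(1)},\dots,\mathbf{v}^{(m)}\}\subseteq B_{n,p}(\mathbf{u},\epsilon)$; the aim is to show $m \le (3K/\delta)^{\delta n/2}\,2^{H(\delta/2)n}$.

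First I would control the number of coordinates at which a point of the ball deviates appreciably from $\mathbf{u}$. For $\mathbf{v}\in B_{n,p}(\mathbf{u},\epsilon)$ call a coordinate $k$ \emph{far} if $|v_k-u_k|\ge \delta/2$, and let $S(\mathbf{v})$ be the set of far coordinates. Since $d_{n,p}(\mathbf{v},\mathbf{u})\le\epsilon$ gives $\sum_{k=0}^{n-1}|v_k-u_k|^p\le n\epsilon^p$, and each far coordinate contributes at least $(\delta/2)^p$ to this sum, a Markov-type count yields $|S(\mathbf{v})|\le n\epsilon^p/(\delta/2)^p$. The choice $\epsilon=(\delta/2)^{(1+p)/p}$ is calibrated exactly so that $\epsilon^p/(\delta/2)^p=(\delta/2)^{1+p}/(\delta/2)^p=\delta/2$, whence $|S(\mathbf{v})|\le n\delta/2$.

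Next I would define the signature and verify injectivity. Partition $[-K,K]$ into consecutive half-open intervals of length $\delta$; because $K\ge 1>\delta$, at most $\lceil 2K/\delta\rceil\le 3K/\delta$ intervals suffice. To each $\mathbf{v}$ assign the pair consisting of the index set $S(\mathbf{v})$ together with, for each $k\in S(\mathbf{v})$, the label of the interval containing $v_k$. If two points $\mathbf{v},\mathbf{w}$ of the separated set share a signature, then $S(\mathbf{v})=S(\mathbf{w})=:S$, and for $k\in S$ the values $v_k,w_k$ lie in a common interval so $|v_k-w_k|<\delta$, while for $k\notin S$ both $|v_k-u_k|<\delta/2$ and $|w_k-u_k|<\delta/2$ give $|v_k-w_k|<\delta$. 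Hence $d_{n,\infty}(\mathbf{v},\mathbf{w})<\delta$, contradicting $\delta$-separation unless $\mathbf{v}=\mathbf{w}$; so the signature map is injective on the separated set.

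Finally I would count signatures. The number of admissible sets $S$ is $\sum_{j=0}^{\lfloor n\delta/2\rfloor}\binom{n}{j}$, which is at most $2^{H(\delta/2)n}$ by the standard bound on partial binomial sums (valid since $\delta/2\le 1/2$). For each $S$ with $|S|=j\le n\delta/2$ there are at most $(3K/\delta)^{j}\le (3K/\delta)^{n\delta/2}$ interval assignments, using $3K/\delta\ge 1$. Multiplying the two bounds gives the claimed estimate on $m$. The one genuinely delicate point is the joint calibration of the threshold $\delta/2$ (for declaring a coordinate far) against the interval width $\delta$: it must simultaneously force the far-coordinate count down to $n\delta/2$ — which is exactly what fixes $\epsilon$ — and keep the signature fine enough to separate $d_{n,\infty}$-distant points, while the near coordinates contribute nothing to the count precisely because their deviation stays below $\delta/2$. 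The only hypothesis to watch is that the separated points have coordinates in $[-K,K]$, which is what limits the interval count to $3K/\delta$.
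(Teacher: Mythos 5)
Your proof is correct and is essentially the same argument as the paper's: the paper encodes each point of the separated set by a single string recording, for every coordinate, either the special ``near'' cell $A_0=[-\delta/2,\delta/2)$ (after translating $\mathbf{u}$ to the origin) or one of $\lceil 2K/\delta\rceil\le 3K/\delta$ width-$\delta$ cells, which is exactly your signature (far-coordinate set plus interval labels), with the same Markov count forcing at most $\delta n/2$ non-trivial symbols and the same binomial-sum bound $\sum_{j\le \delta n/2}\binom{n}{j}s^{j}\le (3K/\delta)^{\delta n/2}2^{H(\delta/2)n}$. No substantive difference.
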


\begin{proof}
Suppose $\mathbf{u} \in [-K,K]^{\N}$ and there exists $\{\mathbf{v}_1,\dots,\mathbf{v}_M\} \subset B_{n,p}(\mathbf{u},\epsilon)$ such that $d_{n,\infty}(\mathbf{v}_i,\mathbf{v}_j) \geq \delta$ for $i \neq j$.
For the sake of notation, let $\mathbf{u} = (u(k))_{k \geq 0}$ and $\mathbf{v}_j = (v_j(k))_{k \geq 0}$. By subtracting $\mathbf{u}$ from all these sequences if necessary, we assume without loss of generality that $u(k) = 0$ for all $k$. 

The idea of the proof is to bound $M$ by estimating the number of coordinates of each $\mathbf{v}_j$ that deviate from $0$ by more than $\delta/2$. To begin, we define the following subsets of $[-K,K]$: $A_0 = [-\delta/2,\delta/2)$, and $A_r = [-K+(r-1)\delta,-K+r\delta) \setminus A_0$, for $r = 1, \dots, s$, where $s= \lceil 2K/\delta \rceil$. Now we code the points $\mathbf{v}_j$ according to this partition: define $r_j(k)$ by the relation $v_j(k) \in A_{r_j(k)}$, and let $\pi : \{\mathbf{v}_1,\dots,\mathbf{v}_M\} \to \{0,\dots,s\}^n$ be given by
\begin{equation*}
 \pi(\mathbf{v}_j) = (r_j(0),\dots,r_j(n-1)).
\end{equation*}

First, observe that $\pi$ is injective. Indeed, if $i \neq j$, then $d_n^{\infty}(\mathbf{v}_i,\mathbf{v}_j) \geq \delta$, and hence there exists $k$ such that $|v_i(k) - v_j(k)| \geq \delta$, which implies $r_i(k) \neq r_j(k)$.
Second, observe that
\begin{equation} \label{Eqn:Golden}
 s = \lceil 2K/\delta \rceil \leq 2K/\delta + 1 \leq 3K/\delta.
\end{equation}

Now we proceed with the main bounds. Since $\{\mathbf{v}_1,\dots,\mathbf{v}_M\} \subset B_{n,p}(\mathbf{u},\epsilon)$, we have that for each $j$,
\begin{equation*}
 \epsilon^p n \geq \sum_{k=1}^n |v_j(k)|^p.
\end{equation*}
Furthermore, by construction, if $r_j(k) \neq 0$, then $|v_j(k)| \geq \delta/2$, and therefore
\begin{equation*}
  \epsilon^p n \geq \sum_{k=1}^n |v_j(k)|^p \geq (\delta/2)^p \bigl| \{ k : r_j(k) \neq 0 \} \bigr|.
\end{equation*}
From this inequality and the choice of $\epsilon$, we deduce that 
\begin{equation} \label{Eqn:Perigrine}
\{\pi(\mathbf{v}_1),\dots,\pi(\mathbf{v}_M)\} \subset \biggl\{ z \in \{0,\dots,s\}^n :  \bigl| \{ k : z(k) \neq 0\} \bigr| \leq \delta n /2 \biggr\}.
\end{equation}
By the facts established above (injectivity of $\pi$, (\ref{Eqn:Perigrine}), and (\ref{Eqn:Golden})) and a well-known inequality for binomial sums (see \cite{Shields1996}),
\begin{align*}
 M & = \bigl| \{\pi(\mathbf{v}_1),\dots,\pi(\mathbf{v}_M)\} \bigr| \leq \sum_{k=0}^{\delta n /2} \binom{n}{k} s^k \leq s^{\delta n/2}  \sum_{k=0}^{\delta n /2} \binom{n}{k} \\
 & \leq (3K/\delta)^{\delta n /2}  \sum_{k=0}^{\delta n /2} \binom{n}{k} \leq   (3K/\delta)^{\delta n /2} \cdot 2^{H(\delta/2) n},
\end{align*}
which completes the proof of the lemma.
\end{proof}

With this lemma in place, we now turn to the proof of Theorem \ref{Thm:EntropyEquality}.

\vspace{2mm}

\begin{PfofEntropyEquality}

The inequality $h_p(\mathcal{D}) \leq h_{\infty}(\mathcal{D})$ follows easily from (\ref{Eqn:RedTailed}) and the definition of entropy. The remainder of the proof is devoted to showing the reverse inequality. As in the definition of entropy, we let
\begin{equation*}
\mathcal{U} = \Bigl\{ \bigl(f_{\theta} \circ T_{\theta}^k(x) \bigr)_{k \geq 0} : x \in \cX, \, \theta \in \Theta \Bigr\}.
\end{equation*}
Since $\cX$ and $\Theta$ are compact and the map $(\theta,x) \to f_{\theta}(x)$ is continuous, there exists $K\geq 1$ such that $|f_{\theta}(x)| \leq K$ for all $x \in \cX$ and $\theta \in \Theta$. Thus $\mathcal{U} \subset [-K,K]^{\N}$.

Let $\delta \in (0,1)$, and let $\{\mathbf{v}_1, \dots, \mathbf{v}_M\}$ be a maximal $\delta$-separated set for $\mathcal{U}$ with respect to $d_{n,\infty}( \cdot, \cdot)$. Note that $M = M_{n}^{\infty}(\mathcal{U},\delta)$. 
Now let $\epsilon = (\delta/2)^{(1+p)/p}$, and let $\{\mathbf{u}_1,\dots,\mathbf{u}_L\}$ be an $\epsilon$-covering set for $\mathcal{U}$ with respect to $d_{n,p}(\cdot, \cdot)$ with minimal cardinality. Note that $L = N(\mathcal{U},\epsilon,d_{n,p})$.
By the union bound, 
\begin{align}
\begin{split} \label{Eqn:Goshawk}
M_{n}^{\infty}(\mathcal{U},\delta) & = \Biggl| \bigcup_{i=1}^L B_{n,p}(\mathbf{u}_i,\epsilon) \cap \{ \mathbf{v}_1, \dots, \mathbf{v}_M\} \Biggr| \\
& \leq \sum_{i = 1}^L \Bigl| B_{n,p}(\mathbf{u}_i,\epsilon) \cap \{ \mathbf{v}_1, \dots, \mathbf{v}_M\} \Bigr| \\
& \leq L \cdot \max \biggl\{ \Bigl| B_{n,p}(\mathbf{u}_i,\epsilon) \cap \{ \mathbf{v}_1, \dots, \mathbf{v}_M\} \Bigr| : i \in \{1,\dots,L\} \biggr\} .
\end{split}
\end{align}
Applying Lemma \ref{Lemma:L2Linfty} and the fact that $L = N(\mathcal{U},\epsilon,d_{n,p})$ in (\ref{Eqn:Goshawk}), we obtain
\begin{align} \label{Eqn:Harpy}
M_{n}^{\infty}(\mathcal{U},\delta) \leq N(\mathcal{U},\epsilon,d_{n,p}) (3K/\delta)^{\delta n/2} 2^{H(\delta/2) n}.
\end{align}
Since any maximal $\delta$-separated set must be a $\delta$-covering set, we have $N(\mathcal{U},\delta,d_{n,\infty}) \leq M_{n}^{\infty}(\mathcal{U},\delta)$, and then from (\ref{Eqn:Harpy}), we see that
\begin{equation*}
N(\mathcal{U},\delta,d_{n,\infty}) \leq N(\mathcal{U},\epsilon,d_{n,p}) (3K/\delta)^{\delta n/2} 2^{H(\delta/2) n}.
\end{equation*}
Taking logarithm and dividing by $n$ yields
\begin{equation*}
\frac{1}{n} \log N(\mathcal{U},\delta,d_{n,\infty}) \leq \frac{1}{n} \log N(\mathcal{U},\epsilon,d_{n,p}) + \frac{\delta}{2} \log (3K / \delta) + H(\delta/2) \log 2.
\end{equation*}
Thus letting $n$ tend to infinity gives
\begin{equation*}
h_{\infty}(\mathcal{U},\delta) \leq h_p(\mathcal{U},\epsilon) +  \frac{\delta}{2} \log (3K / \delta) + H(\delta/2) \log 2.
\end{equation*}
Since $\epsilon = (\delta/2)^{(1+p)/p}$, taking the limit as $\delta$ decreases to zero allows us to conclude that $h_{\infty}(\mathcal{D}) \leq h_p(\mathcal{D})$.
\end{PfofEntropyEquality}

\subsection{Variational characterization of mean width}

Here we collect a few facts regarding mean width, which are used elsewhere. Let us begin with the fact that the sequence of finite sample mean widths is subadditive.
\begin{rmk} \label{Rmk:Love}
Using definition (\ref{kappan-def}), one may easily check that that for $m,n \geq 1$,
\begin{align*}
\kappa_{m+n}(\mathcal{D} : \boldsymbol{\varepsilon}) & \leq 
\mathbb{E} \Biggl[ \sup_{x,\theta} \sum_{k=0}^{m-1}  f_{\theta} \circ T_{\theta}^k(x) \cdot \varepsilon_k \Biggr]  + \mathbb{E} \Biggl[ \sup_{x,\theta} \sum_{k=m}^{m+n-1} f_{\theta} \circ T_{\theta}^k(x) \cdot \varepsilon_k  \Biggr]
 \\
& \leq  
\kappa_{m}(\mathcal{D} : \boldsymbol{\varepsilon}) + 
\kappa_{n}(\mathcal{D} : \boldsymbol{\varepsilon}) .
\end{align*}
The last inequality above is a consequence of the stationarity of $\boldsymbol{\varepsilon}$ and the 
fact that $f_{\theta} \circ T_{\theta}^{k + m}(x) = f_{\theta} \circ T_{\theta}^k ( T_{\theta}^m (x))$ 
for any $x \in \cX$ and $\theta \in \Theta$.  Thus the sequence 
$\{ \kappa_{n}(\mathcal{D} : \boldsymbol{\varepsilon}) : n \geq 1 \}$ is subadditive, and therefore
the limit in (\ref{MW}) exists.
\end{rmk}


The following result provides a variational characterization of the mean width.

\begin{thm} \label{Thm:KappaVar}
If $\mathcal{D}$ is a family of dynamical models satisfying (D1)-(D3) and 
$\boldsymbol{\varepsilon}$ is a stationary ergodic process with finite mean, then
\begin{equation*}
\kappa(\mathcal{D} : \boldsymbol{\varepsilon}) 
\ = 
\sup_{\mathbf{U} \in \mathcal{Q}_{\mathcal{D}}} 
\sup_{\cJ(\mathbf{U}, \, \boldsymbol{\varepsilon})} 
\mathbb{E} \bigl[ U_0 \cdot \varepsilon_0 \bigr],
\end{equation*}
and the supremum is achieved.
\end{thm}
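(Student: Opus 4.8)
The plan is to prove the two inequalities separately and to extract attainment from the optimal tracking machinery. First I would rewrite the finite–sample mean width as a supremum over sequences. Fix $K$ with $|f_\theta(x)|\le K$, let $\mathcal{U}_{\mathcal{D}}$ be as in (\ref{Eqn:U}), and set
\[
\Phi_n(\boldsymbol{\varepsilon}) \;=\; \sup_{\mathbf{u}\in\mathcal{U}_{\mathcal{D}}}\ \sum_{k=0}^{n-1} u_k\,\varepsilon_k ,
\]
so that $\kappa_n(\mathcal{D}:\boldsymbol{\varepsilon})=\mathbb{E}\,\Phi_n(\boldsymbol{\varepsilon})$ by (\ref{kappan-def}). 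Using $\tau(\mathcal{U}_{\mathcal{D}})\subseteq\mathcal{U}_{\mathcal{D}}$ together with stationarity of $\boldsymbol{\varepsilon}$ (exactly as in Remark \ref{Rmk:Love}), the random variables $\Phi_n$ are subadditive along the shift on the noise, and $|\Phi_n|\le K\sum_{k<n}|\varepsilon_k|$ is integrable since $\boldsymbol{\varepsilon}$ has finite mean. Kingman's subadditive ergodic theorem then gives $n^{-1}\Phi_n\to\kappa(\mathcal{D}:\boldsymbol{\varepsilon})$ almost surely and in $L^1$, so the asymptotic mean width is also the pathwise limit of the normalized suprema.

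For the bound $\kappa(\mathcal{D}:\boldsymbol{\varepsilon})\ge \sup_{\mathbf{U}}\sup_{\cJ}\mathbb{E}[U_0\varepsilon_0]$ (the easy direction) I would fix $\mathbf{U}\in\mathcal{Q}_{\mathcal{D}}$ and a joining $[\mathbf{U},\boldsymbol{\varepsilon}]=\{(\tilde U_k,\tilde\varepsilon_k)\}$. Every realization of $\tilde{\mathbf{U}}$ lies in $\mathcal{U}_{\mathcal{D}}$, since each process in $\mathcal{Q}_{\mathcal{D}}$ is supported on $\mathcal{U}_{\mathcal{D}}$; hence pointwise $\sum_{k<n}\tilde U_k\tilde\varepsilon_k\le\Phi_n(\tilde{\boldsymbol{\varepsilon}})$. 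Taking expectations and using that $\tilde{\boldsymbol{\varepsilon}}$ has the law of $\boldsymbol{\varepsilon}$ and that the joining is stationary yields $n\,\mathbb{E}[\tilde U_0\tilde\varepsilon_0]\le\kappa_n(\mathcal{D}:\boldsymbol{\varepsilon})$; dividing by $n$ and letting $n\to\infty$ gives $\mathbb{E}[\tilde U_0\tilde\varepsilon_0]\le\kappa(\mathcal{D}:\boldsymbol{\varepsilon})$, and taking the supremum over joinings and over $\mathbf{U}$ completes this direction.

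The reverse inequality together with attainment is the heart of the matter, and I would obtain it by recasting $\Phi_n$ as an optimal tracking cost and appealing to Theorem \ref{Thm:Tracking}. I take the model system $(\mathcal{Z},T)$ of Lemma \ref{Lemma:BigShift} with $\varphi=\proj_\Theta$, the observed system $(\R^{\N},S,\PP)$ where $\PP$ is the (ergodic) law of $\boldsymbol{\varepsilon}$ and $S$ the shift, and the cost $c\bigl(\mathbf{v},(\theta,\mathbf{u})\bigr)=-u_0 v_0$. This $c$ is continuous, hence lower semicontinuous, and $\sup_{(\theta,\mathbf{u})}|c(\mathbf{v},(\theta,\mathbf{u}))|\le K|v_0|$ is dominated by an $L^1(\PP)$ function because $\mathbb{E}|\varepsilon_0|<\infty$, so the hypotheses of the tracking results hold. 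Since $\inf_z n^{-1}\sum_{k<n}c(S^k\mathbf{v},T^kz)=-n^{-1}\Phi_n(\mathbf{v})$, the optimal tracking average converges almost surely to $-\kappa(\mathcal{D}:\boldsymbol{\varepsilon})$ by the first paragraph; on the other hand the variational identity underlying the results of \cite{McGoffNobel} behind Theorem \ref{Thm:Tracking} identifies this limit with $\min_\theta\min_{\cJ(\PP:\theta)}\mathbb{E}[c(Y_0,Z_0)]$, with the minimum achieved since $\Theta_{min}$ is non-empty and compact. Finally I would translate this quantity: by Lemma \ref{Lemma:BigShift}, measures $\mu\in\mathcal{M}(\mathcal{Z},T)$ with $\mu(\varphi^{-1}\{\theta\})=1$ correspond, via ergodic decomposition, to processes $\mathbf{U}\in\mathcal{Q}_\theta$, and joinings in $\cJ(\PP:\theta)$ correspond value-preservingly to joinings in $\cJ(\mathbf{U},\boldsymbol{\varepsilon})$ under the coordinate maps $\mathbf{v}\mapsto v_0$ and $(\theta,\mathbf{u})\mapsto u_0$, with $\mathbb{E}[c(Y_0,Z_0)]=-\mathbb{E}[U_0\varepsilon_0]$. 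Hence $\min_\theta\min_{\cJ(\PP:\theta)}\mathbb{E}[c]=-\sup_{\mathbf{U}\in\mathcal{Q}_{\mathcal{D}}}\sup_{\cJ(\mathbf{U},\boldsymbol{\varepsilon})}\mathbb{E}[U_0\varepsilon_0]$, which combined with the tracking limit yields the asserted equality and the attainment.

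The main obstacle is the reverse inequality: it rests on extracting a limiting optimal joining from near-optimal finite-horizon tracking sequences over the non-compact observation space $\R^{\N}$ with the unbounded cost $u_0 v_0$. The crucial enabling facts are the fixed noise marginal $\PP$, which supplies tightness, and the $L^1(\PP)$-domination $\sup_z|c|\le K|v_0|$ coming from $\mathbb{E}|\varepsilon_0|<\infty$; these are exactly the conditions needed to invoke the tracking results of \cite{McGoffNobel} packaged in Theorem \ref{Thm:Tracking}. A secondary technical point, handled by Lemma \ref{Lemma:BigShift} and an ergodic decomposition, is the bookkeeping that matches $\cJ(\PP:\theta)$ with $\cJ(\mathbf{U},\boldsymbol{\varepsilon})$ and shows the two extremal values coincide.
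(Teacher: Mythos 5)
Your proposal is correct and follows essentially the same route as the paper: the paper's proof consists precisely of applying the optimal tracking machinery of \cite{McGoffNobel} to the system $(\mathcal{Z},T)$ of Lemma \ref{Lemma:BigShift} with the noise process as the observed system and the product cost in place of the loss. Your additional material (the Kingman/subadditivity reformulation, the direct pointwise argument for the easy inequality, and the explicit bookkeeping identifying $\cJ(\PP:\theta)$ with $\cJ(\mathbf{U},\boldsymbol{\varepsilon})$) simply fills in details the paper delegates to the cited reference.
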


\begin{proof}
Using the same system $(\mathcal{Z},T)$ appearing in Section \ref{Sect:Connections} as the model system, the noise process $\boldsymbol{\varepsilon}$ in place of the observation process, and a different cost function (product instead of loss), Theorem \ref{Thm:KappaVar} is a consequence of \cite[Theorem 1.4]{McGoffNobel}. 
\end{proof}

\subsection{Connecting entropy and mean width} \label{Sect:EntropyAndMeanWidth}

In this section we investigate connections between the notions of entropy and mean width for continuous families of dynamical models. 
We begin by proving that a family with zero entropy must have zero mean width relative to centered i.i.d. processes. Our proof relies on a foundational result of Furstenberg concerning joinings, stated below as Theorem \ref{Thm:Disjointness}.

Let $\mathbf{U}$ be a stationary stochastic process taking values in a separable completely metrizable space $\mathcal{U}$. Let $\pi$ be a finite Borel partition of $\mathcal{U}$. For $n \geq 1$, and $A_0^{n-1} = (A_0,\dots,A_{n-1}) \in \pi^n$, let
\begin{equation*}
p(A_0^{n-1}) = \mathbb{P} \bigl( U_{0} \in A_0, \dots, U_{n-1} \in A_{n-1} \bigr),
\end{equation*}
and consider
\begin{equation*}
H_n(\mathbf{U},\pi) = - \sum_{A_0^{n-1} \in \pi^n}  p(A_0^{n-1}) \log p(A_0^{n-1}),
\end{equation*}
with the convention that $0 \cdot \log 0 = 0$.
By subadditivity, we may take the limit as $n$ tends to infinity:
\begin{equation*}
h(\mathbf{U},\pi) = \lim_n \frac{1}{n} H_n(\mathbf{U},\pi).
\end{equation*}
Then taking the supremum over all finite Borel partitions of $\mathcal{U}$ gives the entropy of the process $\mathbf{U}$:
\begin{equation*}
h(\mathbf{U}) = \sup_{\pi} h(\mathbf{U},\pi).
\end{equation*}

In proving Theorem \ref{Thm:Furstenberg}, we will rely on the following result of Furstenberg.

\begin{CorThm} \cite[Theorem I.2]{Furstenberg1967} \label{Thm:Disjointness}
If $h(\mathbf{U})=0$ and $\mathbf{V}$ is i.i.d., then the only joining of $\mathbf{U}$ and $\mathbf{V}$ is the independent joining.
\end{CorThm}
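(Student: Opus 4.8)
The plan is to prove the statement by an entropy computation carried out inside the joining itself, exploiting the classical fact that conditioning a partition on a \emph{zero-entropy} invariant factor cannot decrease its dynamical entropy, together with the exact entropy arithmetic available for the i.i.d.\ factor. First I would pass to two-sided natural extensions, so that $\mathbf{U}$, $\mathbf{V}$, and any joining $\mathbf{W} = \{(\tilde{U}_k,\tilde{V}_k) : k \in \Z\}$ become two-sided stationary; this preserves i.i.d.-ness, preserves $h(\mathbf{U})=0$, and sets up a bijection between one- and two-sided joinings. Working in the joining system with shift $\sigma$, write $\mathcal{U} = \sigma(\tilde{U}_k : k \in \Z)$ and $\mathcal{V} = \sigma(\tilde{V}_k : k \in \Z)$ for the two factor $\sigma$-algebras; the desired conclusion is exactly that $\mathcal{U}$ and $\mathcal{V}$ are independent under $\mathbf{W}$. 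To reduce to finite alphabets I would fix a finite Borel partition $\beta$ of the range of $V_0$; since $\mathcal{V} = \bigvee_{\beta} \sigma(\beta(\tilde{V}_k) : k)$, it suffices to show $\sigma(\beta(\tilde{V}_k):k) \perp \mathcal{U}$ for every such $\beta$, and each process $(\beta(\tilde{V}_k))_k$ is i.i.d.\ with finite single-symbol entropy $H(\beta(\tilde{V}_0)) < \infty$.

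The core computation is as follows. Because the $\mathbf{V}$-marginal of any joining is the i.i.d.\ law, the dynamical entropy satisfies $h(\sigma,\beta) = H(\beta(\tilde{V}_0))$, while the $\mathbf{U}$-factor is isomorphic to the shift on $\mathbf{U}$, so $h(\sigma,\mathcal{U}) = h(\mathbf{U}) = 0$. The Rokhlin relative-entropy (Abramov--Rokhlin) formula for the invariant factor $\mathcal{U}$ gives $h(\sigma,\beta \vee \mathcal{U}) = h(\sigma,\mathcal{U}) + h(\sigma,\beta \mid \mathcal{U})$, and since $h(\sigma,\mathcal{U})=0$ this forces $h(\sigma,\beta \mid \mathcal{U}) = h(\sigma,\beta \vee \mathcal{U}) \ge h(\sigma,\beta)$; combined with the trivial bound $h(\sigma,\beta \mid \mathcal{U}) \le h(\sigma,\beta)$, I obtain $h(\sigma,\beta \mid \mathcal{U}) = H(\beta(\tilde{V}_0))$. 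On the other hand the conditional dynamical entropy is a decreasing limit, $h(\sigma,\beta\mid\mathcal{U}) = \lim_k H\bigl(\beta(\tilde{V}_0) \mid \beta(\tilde{V}_{-1}),\dots,\beta(\tilde{V}_{-k}),\mathcal{U}\bigr)$, whose $k=0$ term is $H(\beta(\tilde{V}_0)\mid\mathcal{U}) \le H(\beta(\tilde{V}_0))$. Sandwiching these inequalities forces every term of this decreasing sequence to equal $H(\beta(\tilde{V}_0))$, which says that $\beta(\tilde{V}_0)$ is independent of $\sigma(\beta(\tilde{V}_{-1}),\dots,\beta(\tilde{V}_{-k})) \vee \mathcal{U}$ for all $k$. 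Letting $k\to\infty$ and using stationarity yields $\beta(\tilde{V}_0) \perp \sigma(\beta(\tilde{V}_j):j<0) \vee \mathcal{U}$, and a routine telescoping over coordinates then upgrades this to $\sigma(\beta(\tilde{V}_k):k) \perp \mathcal{U}$. Taking the join over all finite partitions $\beta$ gives $\mathcal{V} \perp \mathcal{U}$, i.e.\ $\mathbf{W}$ is the independent joining.

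The hard part will be the identity $h(\sigma,\beta\mid\mathcal{U}) = h(\sigma,\beta)$: the assertion that conditioning on the zero-entropy invariant factor $\mathcal{U}$ cannot lower the dynamical entropy of $\beta$ is precisely where the hypothesis $h(\mathbf{U})=0$ is consumed, and it must be justified via the Rokhlin relative-entropy formula for an invariant sub-$\sigma$-algebra together with the identification of the factor entropy $h(\sigma,\mathcal{U})$ with $h(\mathbf{U})$. The remaining points, namely the reduction to finite partitions for general real-valued $\mathbf{V}$ (legitimate because finite partitions have finite entropy and generate $\mathcal{V}$) and the passage to two-sided extensions, are routine but should be stated explicitly, as they are what let the exact i.i.d.\ entropy arithmetic and the decreasing-limit formula be applied verbatim.
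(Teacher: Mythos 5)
Your proof is correct, and at bottom it is the same Furstenberg-style entropy sandwich that the paper relies on; note, however, that the paper never proves Theorem \ref{Thm:Disjointness} itself (it is cited from Furstenberg) but instead proves the three-process extension, Theorem \ref{Thm:OurDisjointness}, in Appendix \ref{Sect:AdditionalProofs}, so that is the natural point of comparison. The two implementations consume the hypotheses through different key lemmas. The paper first discretizes \emph{all} processes to finite alphabets, then works one-sided: subadditivity of joining entropy pins down $h([\mathbf{U},\mathbf{V},\mathbf{W}]) = h(\mathbf{V}) + H(W_1)$, the information-cocycle identity $h([\mathbf{U},\mathbf{V},\mathbf{W}]) = H(U_1,V_1,W_1 \mid U_2,V_2,W_2,\dots)$ together with the chain rule then forces $H(W_1 \mid U_1,V_1,U_2,V_2,W_2,\dots) = H(W_1)$, and a block-repetition argument over $(W_{jr+1},\dots,W_{(j+1)r})$ upgrades this to independence of the entire i.i.d.\ process. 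You instead pass to two-sided natural extensions, discretize only the i.i.d.\ marginal, keep the zero-entropy system as an un-discretized invariant factor $\mathcal{U}$, and spend the hypothesis $h(\mathbf{U})=0$ in exactly one place, via the Rokhlin/Abramov--Rokhlin addition formula $h(\sigma,\beta\vee\mathcal{U}) = h(\sigma,\mathcal{U}) + h(\sigma,\beta \mid \mathcal{U})$, finishing with the decreasing-limit formula for conditional entropy and a coordinate-wise telescoping; all of these steps are sound, including the reduction to finite partitions and the passage between one-sided joinings and their natural extensions. The trade-off: your route invokes heavier standard machinery (relative entropy over an invariant factor and the relative Kolmogorov--Sinai theorem), but in exchange it never discretizes the zero-entropy process and makes transparent where zero entropy is used; the paper's computation is more elementary---only the chain rule and subadditivity for finite-valued processes---and, with no extra effort, delivers the stronger conclusion actually needed in Section \ref{Sect:SNproofs}, namely that the pair $[\mathbf{U},\mathbf{V}]$ is jointly independent of the i.i.d.\ process. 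To recover that extension from your scheme you would need to thread the auxiliary ergodic process through the sandwich as the paper does, since your argument as written yields only the two-process statement.
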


\vspace{2mm}


\begin{PfofThmFurstenberg}

Let $\mathbf{U} \in \mathcal{Q}_{\mathcal{D}}$. Since $h(\mathcal{D}) = 0$, Lemma \ref{Lemma:VarPrin} implies that $h(\mathbf{U})=0$. Then by the result of Furstenberg (Theorem \ref{Thm:Disjointness} above), the only joining of $\mathbf{U}$ with $\boldsymbol{\varepsilon}$ is the independent joining. Thus, for any joining $[\mathbf{U},\boldsymbol{\varepsilon}]$ with $\mathbf{U} \in \mathcal{Q}_{\mathcal{D}}$, we have
\begin{equation*}
\mathbb{E} \bigl[ U_0 \cdot \varepsilon_0 \bigr] =  \mathbb{E} \bigl[ U_0 \bigr] \cdot \mathbb{E} \bigl[ \varepsilon_0 \bigr] = 0.
\end{equation*}
Then by Theorem \ref{Thm:KappaVar},
\begin{equation*}
\kappa(\mathcal{D} : \boldsymbol{\varepsilon}) = \sup_{\mathbf{U} \in \mathcal{Q}_{\mathcal{D}}} \sup_{\cJ(\mathbf{U}, \boldsymbol{\varepsilon})} \mathbb{E} \bigl[ U_0 \cdot \varepsilon_0 \bigr]  = 0.
\end{equation*}
%
%

Now suppose that $h(\mathcal{D}) >0$ and that $(\varepsilon_k)_{k \geq 0}$ are i.i.d.\ standard normal random variables.
For each $n \geq 1$, an application of Sudakov's lower bound \cite{Sudakov1973} (see also \cite[Theorem 6.1]{Ledoux1996}) yields
\begin{align*}
 \frac{1}{n} \kappa_n(\mathcal{D} : \boldsymbol{\varepsilon}) & \geq  
   \sup_{\delta>0} \frac{\delta}{6} \biggl( \frac{1}{n} \log N(\mathcal{U}_{\mathcal{D}},\delta,d_{n,2}) \biggr)^{1/2}.
\end{align*}
By Theorem \ref{Thm:EntropyEquality} and the positive entropy hypothesis, we have $h_{2}(\mathcal{U}_{\mathcal{D}}) = h(\mathcal{D}) >0$. Therefore there exists $\delta>0$ such that $h_{2}(\mathcal{U}_{\mathcal{D}},\delta)>0$. With this choice of $\delta$, we take $n$ to infinity in the previous display and obtain
\begin{equation*}
 \kappa_{G}(\mathcal{D}) \geq \frac{\delta}{6} \bigl( h_{2}(\mathcal{U}_{\mathcal{D}},\delta) \bigr)^{1/2} > 0.
\end{equation*}
\end{PfofThmFurstenberg}

\section{Signal plus noise (proofs)} \label{Sect:SNproofs}

This section addresses empirical risk minimization for families dynamical models in the signal plus noise setting.
Recall that joinings were defined for $\R$-valued processes in Definition \ref{Defn:Joinings}. 

The following result is an extension of Furstenberg's result (stated above as Theorem \ref{Thm:Disjointness}), which we use to show that minimum risk estimates decouple the signal from the noise in the low complexity setting. 
For a proof, see Appendix \ref{Sect:OurDisjointnessProof}.

\begin{thm} \label{Thm:OurDisjointness}
Suppose that $\mathbf{U}$ is a zero entropy process, $\mathbf{V}$ is a stationary ergodic process, and $\mathbf{W}$ is an i.i.d. process. 
Suppose that $[\mathbf{U},\mathbf{V},\mathbf{W}]$ is a joining of these three processes  such that $\mathbf{V}$ and $\mathbf{W}$ are independent. Then $[\mathbf{U},\mathbf{V}]$ is independent of $\mathbf{W}$.
\end{thm}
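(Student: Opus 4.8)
The plan is to reduce the statement to Furstenberg's disjointness theorem (Theorem \ref{Thm:Disjointness}) together with a \emph{relative} (conditional) version of it. Write $\lambda$ for the joining $[\mathbf{U},\mathbf{V},\mathbf{W}]$, regarded as a shift-invariant measure on the product sequence space, and let $\mathcal{B}_U, \mathcal{B}_V, \mathcal{B}_W$ denote the shift-invariant sub-$\sigma$-algebras generated by the three coordinate processes. The hypothesis is that the $(\mathbf{V},\mathbf{W})$-marginal of $\lambda$ is the product $\mu_{\mathbf{V}} \otimes \mu_{\mathbf{W}}$, and the goal is to show that $\mathcal{B}_W$ is independent of $\mathcal{B}_U \vee \mathcal{B}_V$ under $\lambda$.

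First I would reduce to the case that $\lambda$ is ergodic. Using the ergodic decomposition $\lambda = \int \lambda_\omega \, dP(\omega)$ into shift-ergodic components, I would check that almost every component again satisfies the hypotheses: since $\mathbf{V}$ and $\mathbf{W}$ are already ergodic their marginals are preserved by a.e.\ component; since $\mathbf{V}\times\mathbf{W}$ is ergodic (a product of an ergodic process with the weakly mixing i.i.d.\ process $\mathbf{W}$ is ergodic) it is an extreme point of the simplex of invariant measures, so by uniqueness of its representing measure the $(\mathbf{V},\mathbf{W})$-marginal of a.e.\ component is again $\mu_{\mathbf{V}}\otimes\mu_{\mathbf{W}}$; and since entropy is affine over the ergodic decomposition, $h(\mathbf{U})=0$ forces a.e.\ $\mathbf{U}$-marginal to have zero entropy. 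Because the $\mathbf{W}$-factor equals the fixed measure $\mu_{\mathbf{W}}$ across all components, the conclusion for a.e.\ component, $\lambda_\omega=(\lambda_\omega)_{UV}\otimes\mu_{\mathbf{W}}$, integrates back to $\lambda=\lambda_{UV}\otimes\mu_{\mathbf{W}}$, so it suffices to treat ergodic $\lambda$.

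Next I would reduce the target independence to a conditional independence statement. A short computation shows that, given $\mathbf{V}\perp\mathbf{W}$, the desired independence $(\mathbf{U},\mathbf{V})\perp\mathbf{W}$ is equivalent to the conditional independence $\mathbf{U}\perp\mathbf{W}\mid\mathbf{V}$. To establish the latter I would work with relative entropy over the factor $\mathcal{B}_V$. On one hand, conditioning can only decrease entropy, so $h(\mathbf{U}\mid\mathbf{V})\le h(\mathbf{U})=0$; thus $\mathcal{B}_U\vee\mathcal{B}_V$ lies in the relative Pinsker algebra of $\lambda$ over $\mathcal{B}_V$. On the other hand, $\mathbf{W}$ is i.i.d.\ and independent of $\mathbf{V}$, so it is a relatively K (completely positive relative entropy) factor over $\mathcal{B}_V$. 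The relative analogue of Furstenberg's dichotomy---a relatively K factor is relatively independent, over the base, of the relative Pinsker algebra---then yields $\mathbf{W}\perp(\mathcal{B}_U\vee\mathcal{B}_V)\mid\mathbf{V}$, which gives $\mathbf{U}\perp\mathbf{W}\mid\mathbf{V}$.

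The main obstacle is precisely this relative disjointness step: Theorem \ref{Thm:Disjointness} is stated unconditionally, so one cannot simply invoke it. I would obtain the needed statement either by reproving Furstenberg's argument in the relative setting (replacing entropy, the Pinsker algebra, and the independent joining by their relative analogues over $\mathcal{B}_V$), or by disintegrating $\lambda$ over the ergodic factor $\mathbf{V}$ and reducing fiberwise to the absolute theorem, using the ergodicity of $\mathbf{V}\times\mathbf{W}$ to control the disintegration. As a consistency check, the unconditional conclusion $\mathbf{U}\perp\mathbf{W}$ follows directly from Theorem \ref{Thm:Disjointness} (zero entropy versus i.i.d.); this is a necessary special case but, since pairwise independence does not imply joint independence, it is not sufficient, which is exactly why the full zero-entropy hypothesis on $\mathbf{U}$---rather than mere pairwise disjointness---must be exploited in the conditional argument.
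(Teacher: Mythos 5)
Your two reductions are sound: passing to ergodic components of the joining (using affinity of entropy over the decomposition and extremality of the ergodic product $\mu_{\mathbf{V}}\otimes\mu_{\mathbf{W}}$), and recasting the goal as the conditional independence $\mathbf{U}\perp\mathbf{W}\mid\mathbf{V}$, are both correct. But the crux of the theorem --- the relative disjointness, over the common base $\mathcal{B}_V$, of the relatively zero-entropy factor $\mathcal{B}_U\vee\mathcal{B}_V$ from the relatively Bernoulli factor $\mathcal{B}_V\vee\mathcal{B}_W$ --- is precisely the step you do not carry out. Of the two routes you offer for it, the second does not work as stated: disintegrating $\lambda$ over the factor $\mathcal{B}_V$ produces fiber measures $\lambda_v$ satisfying $\lambda_{Sv}=S_*\lambda_v$ rather than shift-invariant measures, so there is no absolute disjointness theorem to apply fiberwise. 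The first route (redoing Furstenberg's argument in the relative setting, or citing a precise relative-disjointness theorem from the relative entropy theory literature) is viable, but then that relative argument is the entire proof, and your write-up stops exactly where it would have to begin. As written, there is a genuine gap at the decisive step.

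For comparison, the paper's proof is, in effect, your ``relative Furstenberg'' carried out by hand, with no ergodic decomposition, relative Pinsker algebra, or relatively K machinery. It first reduces to finite-valued processes by applying finite partitions to each coordinate and noting that the resulting cylinder events generate the product $\sigma$-algebra. For finite-valued processes it sandwiches the entropy of the triple joining,
$h(\mathbf{V})+H(W_1)=h([\mathbf{V},\mathbf{W}])\le h([\mathbf{U},\mathbf{V},\mathbf{W}])\le h(\mathbf{U})+h([\mathbf{V},\mathbf{W}])=h(\mathbf{V})+H(W_1)$,
using that $[\mathbf{V},\mathbf{W}]$ is the independent joining, $\mathbf{W}$ is i.i.d., and $h(\mathbf{U})=0$; it then expands $h([\mathbf{U},\mathbf{V},\mathbf{W}])$ by the chain rule to force $H(W_1\mid U_1,V_1,U_2,V_2,W_2,\dots)=H(W_1)$, which gives independence of $W_1$ from $\sigma(U_1,V_1,U_2,V_2,\dots)$, and finally blocks the $W$'s into $r$-blocks to upgrade this to independence of the whole process $\mathbf{W}$. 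If you want to complete your version, either supply that conditional-entropy computation (which is what proving the relative statement amounts to) or give an exact reference for the relative disjointness theorem you invoke.
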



Our proofs also require the concept of the relatively independent joining, which results from a standard construction in ergodic theory \cite[p. 126]{Glasner2003}. For notation, if $\mathbf{U}$ is a stationary processes taking values on $\mathcal{U}$ and $f : \mathcal{U} \to \mathcal{W}$ is a measurable map, then we let $f(\mathbf{U})$ denote the $\mathcal{W}$-valued process $\{f(U_k)\}_{k \geq 0}$, and we say that $f$ maps $\mathbf{U}$ onto $\mathbf{W}$ whenever $f(\mathbf{U})$ has the same distribution as $\mathbf{W}$.
\begin{CorThm}[Relatively Independent Joining] \label{Prop:RIJ}
Suppose $\mathbf{U}$, $\mathbf{V}$, and $\mathbf{W}$ are stationary processes taking values on separable completely metrizable spaces. If there are Borel measurable maps $f$ and $g$ such that $f(\mathbf{U})$ and $g(\mathbf{V})$ each have the same distribution as $\mathbf{W}$, then there is a joining $[\mathbf{U}, \mathbf{V}]$ of $\mathbf{U}$ and $\mathbf{V}$ such that $f(\mathbf{U}) = g( \mathbf{V})$ almost surely.
\end{CorThm}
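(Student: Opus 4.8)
The plan is to realize the three processes as shift-invariant Borel probability measures and to produce the required coupling as the \emph{relatively independent joining} of $\mathbf{U}$ and $\mathbf{V}$ over their common factor $\mathbf{W}$. Write $\mu$, $\nu$, $\lambda$ for the laws of $\mathbf{U}$, $\mathbf{V}$, $\mathbf{W}$. At the outset I would pass to two-sided sequence spaces $\mathcal{U}^{\Z}$, $\mathcal{V}^{\Z}$, $\mathcal{W}^{\Z}$, using that a stationary one-sided process extends uniquely to a two-sided stationary one; this makes the common shift $\sigma$ an invertible measure-preserving transformation. Let $\bar{f}$, $\bar{g}$ be the coordinatewise extensions of $f$, $g$. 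These are Borel, commute with $\sigma$, and satisfy $\bar{f}_*\mu = \lambda = \bar{g}_*\nu$ by the distributional hypotheses; by uniqueness of the stationary extension, these factor relations persist for the two-sided extensions. Since the alphabets are separable and completely metrizable, all sequence spaces are standard Borel and admit disintegrations.

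Next I would disintegrate $\mu$ over $\bar{f}$ and $\nu$ over $\bar{g}$, obtaining Borel families of probability measures $\{\mu_w\}$, $\{\nu_w\}$ indexed by $w \in \mathcal{W}^{\Z}$ with $\mu_w(\bar{f}^{-1}\{w\}) = 1$, $\nu_w(\bar{g}^{-1}\{w\}) = 1$ for $\lambda$-a.e.\ $w$, and $\mu = \int \mu_w\, d\lambda(w)$, $\nu = \int \nu_w\, d\lambda(w)$. I then set
\[
\rho \ = \ \int (\mu_w \times \nu_w)\, d\lambda(w)
\]
on $\mathcal{U}^{\Z} \times \mathcal{V}^{\Z}$. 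The marginals are immediate: integrating out one coordinate returns $\mu$, respectively $\nu$. The almost-sure constraint is equally direct, since under $\mu_w \times \nu_w$ one has $\bar{f}(\mathbf{u}) = w = \bar{g}(\mathbf{v})$ almost surely, so $\bar{f}(\mathbf{u}) = \bar{g}(\mathbf{v})$ holds $\rho$-a.s., which is exactly $f(\mathbf{U}) = g(\mathbf{V})$ a.s.\ at every coordinate.

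The one remaining, and main, point is stationarity, $(\sigma \times \sigma)_*\rho = \rho$. For this I would establish the equivariance $\sigma_*\mu_w = \mu_{\sigma w}$ and $\sigma_*\nu_w = \nu_{\sigma w}$ for $\lambda$-a.e.\ $w$, and then compute
\[
(\sigma \times \sigma)_*\rho = \int (\sigma_*\mu_w) \times (\sigma_*\nu_w)\, d\lambda(w) = \int \mu_{\sigma w} \times \nu_{\sigma w}\, d\lambda(w) = \int \mu_w \times \nu_w\, d(\sigma_*\lambda)(w) = \rho,
\]
the last step using $\sigma_*\lambda = \lambda$. The equivariance follows by identifying $u \mapsto \int \phi\, d\mu_{\bar{f}(u)}$ with the conditional expectation $\mathbb{E}_\mu[\phi \mid \mathcal{A}]$, where $\mathcal{A} = \bar{f}^{-1}(\mathcal{B})$, and invoking the measure-preserving identity $\mathbb{E}_\mu[\phi \circ \sigma \mid \mathcal{A}] = \mathbb{E}_\mu[\phi \mid \mathcal{A}] \circ \sigma$, which is valid because $\sigma$ is invertible and commutes with $\bar{f}$, so that $\mathcal{A}$ is genuinely $\sigma$-invariant.

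I expect the stationarity step to be the crux, and the invertibility of $\sigma$ is precisely what makes it work. For the one-sided shift the factor $\sigma$-algebra $\bar{f}^{-1}(\mathcal{B})$ is only forward-invariant (conditioning on the future of the $\mathbf{W}$-process discards its zeroth coordinate), the conditional-expectation identity fails, and the naive equivariance $\sigma_*\mu_w = \mu_{\sigma w}$ need not hold. Passing to the two-sided natural extensions at the start removes this obstruction; having built the stationary two-sided joining, I would restrict to the coordinates $k \geq 0$ to obtain the desired joining of the original one-sided processes, with all marginal and almost-sure properties preserved.
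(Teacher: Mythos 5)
Your construction is correct and is exactly the standard one the paper invokes: the paper gives no proof of this statement, citing it as the relatively independent joining over a common factor from Glasner's book, and your fiber-product-of-disintegrations argument (with the appropriate passage to two-sided natural extensions to make the factor $\sigma$-algebra genuinely shift-invariant, which is where the equivariance $\sigma_*\mu_w = \mu_{\sigma w}$ and hence stationarity comes from) is precisely that construction, carried out carefully.
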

The joining whose existence is asserted by Theorem \ref{Prop:RIJ} will be called the relatively independent joining of $\mathbf{U}$ and $\mathbf{V}$ (relative to $\mathbf{W}$). It will be used several times in the following proofs. 

\vspace{2mm}

\begin{PfofThmGenObsModelConsistency}
 
 By Theorem \ref{Thm:GEN} any sequence of minimal $\ell$-risk parameters converges almost surely to $\Theta_{\ell}(\mathbf{Y})$. We will complete the proof by showing that $\Theta_{\ell}(\mathbf{Y}) = \Theta_L(\mathbf{V})$.
 
 Let $\mathbf{U}$ be in $\mathcal{Q}_{\mathcal{D}}$. Let $[\mathbf{U},\mathbf{Y}]$ be a joining of $\mathbf{U}$ and $\mathbf{Y}$ such that $\mathbb{E} \bigl[ \ell(U_0,Y_0) \bigr] = \gamma_{\ell}(\mathbf{U},\mathbf{Y})$. Also,  let $[\mathbf{V},\boldsymbol{\varepsilon}]$ be the independent joining of $\mathbf{V}$ and $\boldsymbol{\varepsilon}$. Since $\mathbf{Y}$ has the same distribution as $\mathbf{V}+\epsilon$, we may apply Theorem \ref{Prop:RIJ} and let $[\mathbf{U},\mathbf{Y},\mathbf{V},\boldsymbol{\varepsilon}]$ be the relatively independent joining of $[\mathbf{U},\mathbf{Y}]$ and $[\mathbf{V},\boldsymbol{\varepsilon}]$ such that $\mathbf{Y} = \mathbf{V} + \boldsymbol{\varepsilon}$ almost surely. Projecting $[\mathbf{U},\mathbf{Y},\mathbf{V},\boldsymbol{\varepsilon}]$ onto the first, third, and fourth coordinates, we obtain a joining $[\mathbf{U},\mathbf{V},\boldsymbol{\varepsilon}]$ satisfying the conditions of Theorem \ref{Thm:OurDisjointness}. Applying that theorem, we obtain that the $[\mathbf{U},\mathbf{V}]$ is independent of $\boldsymbol{\varepsilon}$. 
 By conditioning on $[\mathbf{U},\mathbf{V}]$ and using that these variables are jointly independent of $\boldsymbol{\varepsilon}$, we see that
 \begin{align*}
 \mathbb{E} \bigl[ \ell(U_0,Y_0) \bigr]  = \mathbb{E} \bigl[ \ell(U_0, V_0 + \varepsilon_0) \bigr] 
  & = \mathbb{E} \Bigl[  \mathbb{E} \bigl[ \ell(u,v +\varepsilon_0)  \mid U_0 = u, V_0 = v \bigr] \Bigr] \\
   & = \mathbb{E} \Bigl[  L(U_0,V_0)  \Bigr] 
   \geq \gamma_L(\mathbf{U},\mathbf{V}).
 \end{align*}
 Hence we have shown that $\gamma_{\ell}(\mathbf{U}, \mathbf{Y}) \geq \gamma_{L}(\mathbf{U}, \mathbf{V})$.
 
 Now let $[\mathbf{U},\mathbf{V}]$ be a joining such that $\mathbb{E} \bigl[ L(U_0,V_0) \bigr] = \gamma_L(\mathbf{U},\mathbf{V})$. Let $[\mathbf{U},\mathbf{V},\boldsymbol{\varepsilon}]$ be the independent joining of $[\mathbf{U},\mathbf{V}]$ with $\boldsymbol{\varepsilon}$, and let $\mathbf{Y} = \mathbf{V} + \boldsymbol{\varepsilon}$. Then
 \begin{align*}
 \mathbb{E} \bigl[  L(U_0,V_0) \bigr] 
 & = \mathbb{E} \Bigl[  \mathbb{E} \bigl[ \ell(u,v +\varepsilon_0)  \mid U_0 = u, V_0 = v \bigr] \Bigr] \\
 & = \mathbb{E} \bigl[ \ell(U_0, V_0 + \varepsilon_0) \bigr] =  \mathbb{E} \bigl[ \ell(V_0,Y_0) \bigr]  \geq \gamma_{\ell}(\mathbf{U},\mathbf{Y}).
 \end{align*}
 Thus $\gamma_L(\mathbf{U}, \mathbf{V}) \geq \gamma_{\ell}(\mathbf{U}, \mathbf{Y})$. Combining this inequality with the reverse inequality, which we established above, yields that $\gamma_L(\mathbf{U}, \mathbf{V}) = \gamma_{\ell}(\mathbf{U}, \mathbf{Y})$. As $\mathbf{U} \in \mathcal{Q}_{\mathcal{D}}$ was arbitrary, we obtain that $\Theta_{\ell}(\mathbf{Y}) = \Theta_L(\mathbf{V})$, which finishes the proof.
   \end{PfofThmGenObsModelConsistency}

\vspace{2mm}

\begin{PfofThm_PropSPEC}

 By Theorem \ref{Thm:GenObsModelConsistency} any sequence of minimum risk parameters converges almost surely to $\Theta_L(\mathbf{V})$. 
 Assume for (1) that $\E \, \varepsilon_0 = 0$ and that $\ell(u,v) = D_F(v,u)$ is a Bregman divergence of a continuously differentiable convex
function $F: \R \to \R$, namely
\[
\ell(u,v) \ := \ F(v) - F(u) - (v-u) F'(u) .
\]
Since $\E (\varepsilon) = 0$, we have
\begin{align*}
L(u,v) 
= 
\E \ell(u, v + \varepsilon_0) & = \E F(v + \varepsilon_0) - F(u) - (v-u) F'(u) \\
& = \ell(u,v) + G(v),
\end{align*}
where $G(v) = \E F(v + \varepsilon_0) - F(v)$ depends only on $v$ and the distribution of $\varepsilon_0$ and is non-negative (since
$F$ is convex).  Thus, for any $\theta \in \Theta$ and any $\mathbf{U} \in \mathcal{Q}_\theta$,
\begin{eqnarray*}
\gamma_L(\mathbf{U},\mathbf{V})
& = & 
\inf_{\mathcal{J}(\mathbf{U},\mathbf{V})} \E L(U_0, V_0) \\
& = &
\inf_{\mathcal{J}(\mathbf{U},\mathbf{V})} \Bigl\{ \E \ell(U_0, V_0) + \E G(V_0) \Bigr\} \\
& = & 
\gamma_\ell (\mathbf{U},\mathbf{V}) + \E G(V_0) .
\end{eqnarray*}
It follows that $\Theta_L(\mathbf{V}) = \Theta_{\ell}(\mathbf{V})$, as desired for (1).

Now assume for (2) that $\mathbf{V}$ is an ergodic process in $\mathcal{Q}_{\theta_0}$ and 
$\mathbb{E} \, \ell(x,y+\varepsilon_0) \geq \mathbb{E} \, \ell(0,\varepsilon_0)$ for all $x,y \in \R$,
with equality if and only if $x = y$. Let $\Theta_1 = \{ \theta \in \Theta : \mathbf{V} \in \mathcal{Q}_{\theta} \}$. We will finish the proof by showing that $\Theta_L(\mathbf{V}) =  \Theta_1$.
 
 Let $\mathbf{U} \in \mathcal{Q}_{\mathcal{D}}$, and let $[\mathbf{U},\mathbf{V}]$ be a joining of these two processes. By the hypothesis concerning $\boldsymbol{\varepsilon}$ and $\ell$, we have
 \begin{align} \label{Eqn:Banff}
 \mathbb{E} \bigl[ L(U_0,V_0) \bigr] & = \mathbb{E} \Bigl[ \mathbb{E} \bigl[ \ell(u, v+\epsilon_0)  \mid  U_0 = u, V_0 = v \bigr] \biggr] \\
 & \geq \mathbb{E} \bigl[ \ell(0,\epsilon_0) \bigr],
 \end{align}
 with equality if and only if $U_0 = V_0$ almost surely. Since $[\mathbf{U},\mathbf{V}]$ is a joining (and in particular is stationary), we observe that $U_0 = V_0$ almost surely if and only if $\mathbf{U} = \mathbf{V}$ almost surely. Thus, we have shown that $\gamma_L(\mathbf{U},\mathbf{V}) \geq  \mathbb{E} \bigl[ \ell(0,\epsilon_0) \bigr]$, with equality if and only if $\mathbf{U} = \mathbf{V}$. Therefore the set of $\theta$ minimizing the quantity $\min_{\mathbf{U} \in \mathcal{Q}_{\theta}} \gamma_L( \mathbf{U}, \mathbf{V})$ is exactly the set of $\theta$ such that $\mathbf{V} \in \mathcal{Q}_{\theta}$. Hence, $\Theta_L(\mathbf{V}) = \Theta_1$, which finishes the proof.
\end{PfofThm_PropSPEC}

\subsection{Least squares estimation}

Here we provide short proofs of our results pertaining to least squares estimation. It is possible to give somewhat more direct proofs of these results (avoiding Theorem \ref{Thm:GEN}, for example), but given the tools that we have already established, we present the most efficient proofs of which we are aware.

\vspace{2mm}

\begin{PfofThm_LS}

By Theorem \ref{Thm:GEN}, any sequence of least squares parameters converges almost surely to $\Theta_{\ell}(\mathbf{Y})$. We will complete the proof by showing that $\Theta_{\ell}(\mathbf{Y}) = \argmin_{\theta} \min_{\mathbf{U} \in \mathcal{Q}_{\theta}} \gamma_2(\mathbf{U},\mathbf{V})$.

Let $\theta \in \Theta$.
Let $\mathbf{U}$ be in $\mathcal{Q}_{\theta}$, and let $[\mathbf{U},\mathbf{Y}]$ be a joining of these processes. Let $[\mathbf{V},\boldsymbol{\varepsilon}]$ be the independent joining, and (using Theorem \ref{Prop:RIJ}) let $[\mathbf{U},\mathbf{Y},\mathbf{V},\boldsymbol{\varepsilon}]$ be the relatively independent joining of $[\mathbf{U},\mathbf{Y}]$ with $[\mathbf{V},\boldsymbol{\varepsilon}]$ such that $\mathbf{Y} = \mathbf{V} + \boldsymbol{\varepsilon}$ almost surely.
Then 
\begin{align}
\begin{split} \label{Eqn:Basilisk}
\mathbb{E} \Bigl[ \bigl| U_0 - Y_0 \bigr|^2 \Bigr] & = \mathbb{E} \Bigl[ \bigl| U_0 - (V_0 + \varepsilon_0) \bigr|^2 \Bigr] \\
& = \mathbb{E} \Bigl[ \bigl| U_0 - V_0 \bigr|^2 \Bigr] - 2 \mathbb{E} \bigl[ (U_0 - V_0) \cdot \varepsilon_0 \bigr] + \mathbb{E} \Bigl[ \varepsilon_0^2 \Bigr].
\end{split}
\end{align}
Since $\kappa(\mathcal{D} : \boldsymbol{\varepsilon}) = 0$, we have $\mathbb{E} [ U_0 \cdot \varepsilon_0 ] = 0$ by Theorem \ref{Thm:KappaVar}. Also, since $V_0$ is independent of $\varepsilon_0$ and $\varepsilon_0$ has zero mean, we have $\mathbb{E}[ V_0 \cdot \varepsilon_0 ] =  0$. Applying these facts in (\ref{Eqn:Basilisk}), we see that
\begin{equation} \label{Eqn:Narwhal}
\mathbb{E} \Bigl[ \bigl| U_0 - Y_0 \bigr|^2 \Bigr] =  \mathbb{E} \Bigl[ \bigl| U_0 - V_0 \bigr|^2 \Bigr]  + \mathbb{E} \Bigl[\varepsilon_0^2 \Bigr].
\end{equation}
Since $\mathbb{E}[\epsilon_0^2]$ is a constant that depends only on $\epsilon_0$ (and not on the joining), we conclude that
\begin{equation} \label{Eqn:Stingray}
\min_{\mathbf{U} \in \mathcal{Q}_{\theta}} \gamma_2( \mathbf{U}, \mathbf{Y}) \geq \min_{\mathbf{U} \in \mathcal{Q}_{\theta}} \gamma_2( \mathbf{U},  \mathbf{V}) + \mathbb{E}[\epsilon_0^2].
\end{equation}

Now let $\mathbf{U} \in \mathcal{Q}_{\theta}$ and $[\mathbf{U},\mathbf{V}]$ be a joining such that $\mathbb{E} [ |U_0-V_0|^2] = \min_{\mathbf{U} \in \mathcal{Q}_{\theta}} \gamma_2(\mathbf{U}, \mathbf{V}) $. Let $\boldsymbol{\varepsilon}$ be independent of $[\mathbf{U},\mathbf{V}]$, and let $\mathbf{Y} = \mathbf{V}+\boldsymbol{\varepsilon}$. Then $(\mathbf{U},\mathbf{Y})$ is a joining of these processes such that
\begin{equation} \label{Eqn:Louise}
\mathbb{E} \Bigl[ \bigl| U_0 - Y_0 \bigr|^2 \Bigr] =  \mathbb{E} \Bigl[ \bigl| U_0 - V_0 \bigr|^2 \Bigr]  + \mathbb{E} \Bigl[\varepsilon_0^2 \Bigr] =  \min_{\mathbf{U} \in \mathcal{Q}_{\theta}} \gamma_2(\mathbf{U}, \mathbf{V})  +  \mathbb{E} \Bigl[\varepsilon_0^2 \Bigr].
\end{equation}
Combining (\ref{Eqn:Stingray}) and (\ref{Eqn:Louise}), we obtain that $\min_{\mathbf{U} \in \mathcal{Q}_{\theta}} \gamma_2(\mathbf{U}, \mathbf{Y})$ is equal to $\min_{\mathbf{U} \in \mathcal{Q}_{\theta}} \gamma_2(\mathbf{U}, \mathbf{V})  +  \mathbb{E} \bigl[\varepsilon_0^2 \bigr]$. Then by minimizing over $\theta$, we see that $\Theta_{2}(\mathbf{Y}) = \argmin_{\theta} \min_{\mathbf{U} \in \mathcal{Q}_{\theta}} \gamma_2(\mathbf{U}, \mathbf{V})$, as was to be shown.
\end{PfofThm_LS}

\vspace{2mm}

\begin{PfofCorRd}
 
 The hypothesis that each $T_{\theta}$ has zero entropy can be easily seen to be equivalent in this context to the statement that $h(\mathcal{D})=0$.
 Then by Theorem \ref{Thm:SPEC} any sequence of least squares parameters converges almost surely to the set $\{\theta \in \Theta : \mathbf{V} \in \mathcal{Q}_{\theta}\}$. Now suppose $\mathbf{V} \in \mathcal{Q}_{\theta}$. Then there exists a measure $\mu_0 \in \mathcal{M}(\cX,T_{\theta})$ such that if $\mathbf{U} = (T_{\theta}^k(X))_{k \geq 0}$, with $X$ distributed according to $\mu_0$, then $\mathbf{U}$ has the same distribution as $\mathbf{V}$. Hence $X$ has the same distribution as $V_0$, which is given by $\mu$, and therefore $\mu = \mu_0$. Furthermore, $(X,T_{\theta}(X))$ must have the same distribution as $(V_0,V_1)$, which implies that $T_{\theta}(x) = T_{\theta^*}(x)$ for $\mu$ almost every $x$. We have thus shown that $\{\theta \in \Theta : \mathbf{V} \in \mathcal{Q}_{\theta}\} \subset \{ \theta : \mu(T_{\theta} = T_{\theta^*}) = 0 \}$. The reverse inclusion is obvious.
\end{PfofCorRd}


%
%
\section{Negative results (proofs)} \label{Sect:NegativeExamples}

In this section, we show that least squares estimation can be almost surely \textit{inconsistent} in a properly specified setting when $h(\mathcal{D}) >0$ (or equivalently, $\kappa_G(\mathcal{D})>0$). 
We begin with a proof of Proposition \ref{Prop:Negative}. 

\vspace{2mm}

\begin{PfofPropNegative}

For the existence of a family $\mathcal{D}$ satisfying the hypotheses of the proposition, see Example \ref{Example:KappaPositive}. Suppose the hypotheses hold for a family $\mathcal{D}$ as in the statement of the proposition. In particular, fix an ergodic $\mathbf{V} \in \mathcal{Q}_{\theta_0} \setminus \cup_{\theta \neq \theta_0} \mathcal{Q}_{\theta}$.

By compactness and continuity, there exists $K$ such that $|f_{\theta}(x)| \leq K$ for all $x \in \mathcal{X}$ and $\theta \in \Theta$. Since $h(\mathcal{D}) >0$, Theorem \ref{Thm:Furstenberg} yields that $\kappa_G(\mathcal{D}) >0$. Let $\sigma_0 = K^2/(2 \kappa_G(\mathcal{D}))$. Let $\boldsymbol{\varepsilon}$ be an i.i.d. process of standard normal random variables, and let $\boldsymbol{\varepsilon}(\sigma) = \sigma \boldsymbol{\varepsilon}$. Now for $\sigma > \sigma_0$, let $\mathbf{Y} = \mathbf{V}+\boldsymbol{\varepsilon}(\sigma)$.

Let $\mathcal{D}_0 = \{ (T_{\theta_0},f_{\theta_0}) \}$ be the family containing only the dynamical model corresponding to the parameter $\theta_0$. By Lemma \ref{Lemma:VarPrin} and our hypothesis on $\theta_0$, we have
\begin{equation*}
 h(\mathcal{D}_0) = \sup_{\mathbf{U} \in \mathcal{Q}_{\theta_0}} h(\mathbf{U}) = 0.
\end{equation*}
Then by (\ref{Eqn:Narwhal}), for any joining $[\mathbf{U},\mathbf{Y}]$ with $\mathbf{U} \in \mathcal{Q}_{\theta_0}$, we obtain
\begin{equation} \label{Eqn:Blue}
 \mathbb{E} \biggl[ \bigl| U_0 - Y_0 \bigr|^2 \biggr] = \mathbb{E} \biggl[ \bigl| U_0 - V_0 \bigr|^2 \biggr] + \mathbb{E}\Bigl[ \varepsilon_0^2 \Bigr] \geq \mathbb{E}\Bigl[ \varepsilon_0^2 \Bigr] = \sigma^2.
\end{equation}
Hence, for any $\mathbf{U} \in \mathcal{Q}_{\theta_0}$, we have $\gamma_{2}(\mathbf{U},\mathbf{Y}) \geq \sigma^2$.

Let us now show that there exists $\mathbf{U} \in \mathcal{Q}_{\mathcal{D}}$ such that $\gamma_2(\mathbf{U}, \mathbf{Y}) < \sigma^2$. As we have already established above, $\kappa_G(\mathcal{D}) >0$. Then by Theorem \ref{Thm:KappaVar}, there exists a process $\mathbf{U} \in \mathcal{Q}_{\mathcal{D}}$ and a joining $[\mathbf{U},\boldsymbol{\varepsilon}]$ such that 
\begin{equation*}
 \mathbb{E} \Bigl[ U_0 \cdot \varepsilon_0(\sigma) \Bigr] = \sigma \mathbb{E} \Bigl[ U_0 \cdot \varepsilon_0 \Bigr] = \sigma \kappa_{G}(\mathcal{D}).
\end{equation*}
Let $[\mathbf{U}, \mathbf{V},\boldsymbol{\varepsilon}]$ be the independent joining of $\mathbf{V}$ with $[\mathbf{U},\boldsymbol{\varepsilon}]$, and let $\mathbf{Y} = \mathbf{V}+\boldsymbol{\varepsilon}$. Then we have
 \begin{align*}
  \gamma_2(\mathbf{U},\mathbf{Y}) & \leq \mathbb{E} \biggl[ \bigl|U_0 - (V_0+\varepsilon_0) \bigr|^2 \biggr] \\
  & =  \mathbb{E} \Bigl[ \bigl| U_0 - V_0 \bigr|^2 \Bigr] - 2 \mathbb{E} \bigl[ (U_0 - V_0) \cdot \varepsilon_0 \bigr] + \mathbb{E} \Bigl[ \varepsilon_0^2 \Bigr] \\
  & \leq K^2 - 2 \sigma \kappa_{G}(\mathcal{D}) + \sigma^2 < \sigma^2,
 \end{align*}
 where the last inequality follows from our choice of $\sigma > \sigma_0$. 
 Combining this inequality with (\ref{Eqn:Blue}), we conclude that $\theta_0$ is not contained in $\Theta_2(\mathbf{Y})$, which is the nonempty compact limit set of least squares estimates by Theorem \ref{Thm:GEN}.
\end{PfofPropNegative}

\vspace{2mm} 

The following example establishes the existence of a family $\mathcal{D}$ of dynamical models satisfying the hypotheses of Proposition \ref{Prop:Negative}. In fact, the parameter set $\Theta$ in this example only contains two values.

\begin{example} \label{Example:KappaPositive}
 Consider the state space $\cX = [0,1]$ with two transformations: let $T_0$ be the identity map on $[0,1]$, and let $T_1$ be the fully chaotic logistic map, given by $T_1(x) = 4x(1-x)$. We let $f_0 = f_1$ be the identity map on $[0,1]$. This information fully determines $\mathcal{D}$.
 
 As $T_0$ is the identity, we clearly have that $h(\mathbf{U}) = 0$ for all $\mathbf{U} \in \mathcal{Q}_{\theta_0}$. Also, we may let $\mathbf{V}$ be the constant process $V_k = 1/2$ for all $k$. Note that $\mathbf{V} \notin \mathcal{Q}_{\theta_1}$. Finally, it is well known that $T_1$ has positive entropy. We have thus verified the hypotheses of Proposition \ref{Prop:Negative}. 
\end{example}

\bibliographystyle{plain}
\bibliography{Optimization_Applications_refs}


\appendix

\section{Additional proofs} \label{Sect:AdditionalProofs}

\subsection{Proof of Lemma \ref{Lemma:VarPrin}}

The well-known variational principle for topological dynamical systems \cite[p. 190]{Walters1982} equates the topological entropy $h_{\mathrm{top}}(T)$ of a continuous transformation $T: \cX \to \cX$ of a compact metrizable space with the supremum of the measure-theoretic entropies $h_T(\mu)$ over $\mu \in \mathcal{M}(\cX,T)$. Here we show how that variational principle may be applied in our setting to obtain Lemma \ref{Lemma:VarPrin}. In the following lemma and its proof, we use subscripts to distinguish the different types of entropy under consideration. Here we write $h_{\mathrm{fam}}(\mathcal{D})$ to denote the quantity $h(\mathcal{D})$ defined in Section \ref{Sect:EntropyDef}, and we write $h_{\mathrm{proc}}(\mathbf{U})$ to denote the quantity $h(\mathbf{U})$ defined in Section \ref{Sect:EntropyAndMeanWidth}.
\begin{lemma} 
For any continuous family $\mathcal{D}$ of dynamical models,
\begin{equation*}
h_{\mathrm{fam}}(\mathcal{D}) = \sup_{\mathbf{U} \in \mathcal{Q}_{\mathcal{D}}} h_{\mathrm{proc}}(\mathbf{U}).
\end{equation*}
\end{lemma}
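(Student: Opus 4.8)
The plan is to realize all three entropies through the single system $(\mathcal{Z},T)$ introduced in Section \ref{Sect:Connections} and then to apply the variational principle. Recall from the remark following \eqref{Eqn:U} that $h_{\mathrm{fam}}(\mathcal{D})$ is the topological entropy $h_{\mathrm{top}}(\tau)$ of the shift $\tau$ on $\mathcal{U}_{\mathcal{D}}$. My first step would be to show that this agrees with the topological entropy $h_{\mathrm{top}}(T)$ of $(\mathcal{Z},T)$. The projection $p:\mathcal{Z}\to\mathcal{U}_{\mathcal{D}}$ forgetting the $\theta$-coordinate is a continuous surjection with $p\circ T=\tau\circ p$, so $(\mathcal{U}_{\mathcal{D}},\tau)$ is a topological factor of $(\mathcal{Z},T)$ and hence $h_{\mathrm{top}}(\tau)\le h_{\mathrm{top}}(T)$. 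For the reverse inequality I would observe that, by definition, $\mathcal{Z}$ is a closed $T$-invariant subset of the compact product $\Theta\times\mathcal{U}_{\mathcal{D}}$ on which $T$ is the restriction of $\Id_{\Theta}\times\tau$; the product rule for topological entropy then gives $h_{\mathrm{top}}(T)\le h_{\mathrm{top}}(\Id_{\Theta}\times\tau)=h_{\mathrm{top}}(\Id_{\Theta})+h_{\mathrm{top}}(\tau)=h_{\mathrm{top}}(\tau)$, since the identity map has zero entropy. Thus $h_{\mathrm{fam}}(\mathcal{D})=h_{\mathrm{top}}(T)$.

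Next I would apply the variational principle \cite[p.\ 190]{Walters1982} to the compact system $(\mathcal{Z},T)$ (compactness and continuity are provided by Lemma \ref{Lemma:BigShift}) to obtain $h_{\mathrm{top}}(T)=\sup_{\mu}h_T(\mu)$, the supremum running over $\mu\in\mathcal{M}(\mathcal{Z},T)$. Since measure-theoretic entropy is affine and attains its supremum over the ergodic measures, it suffices to range over ergodic $\mu$. By Lemma \ref{Lemma:BigShift} every such $\mu$ has the form $\delta_{\theta}\otimes\nu$, where $\nu$ is the law of an ergodic process $\mathbf{U}\in\mathcal{Q}_{\theta}$. The key identification is that $h_T(\delta_{\theta}\otimes\nu)=h_{\mathrm{proc}}(\mathbf{U})$: because the $\theta$-coordinate is $\delta_{\theta}$-almost surely constant, the projection $p$ is an isomorphism of the measure-preserving system $(\mathcal{Z},T,\delta_{\theta}\otimes\nu)$ onto $(\mathcal{U}_{\mathcal{D}},\tau,\nu)$, and the Kolmogorov--Sinai entropy of the shift relative to $\nu$ equals the process entropy $h_{\mathrm{proc}}(\mathbf{U})$, since the finite partitions of the zeroth coordinate generate the Borel $\sigma$-field under $\tau$. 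Consequently $h_{\mathrm{top}}(T)=\sup\{h_{\mathrm{proc}}(\mathbf{U}):\mathbf{U}\in\mathcal{Q}_{\mathcal{D}}\text{ ergodic}\}$.

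Finally I would remove the ergodicity restriction. If $\mathbf{U}\in\mathcal{Q}_{\theta}$ arises from $\mu\in\mathcal{M}(\cX,T_{\theta})$, then pushing the ergodic decomposition of $\mu$ through the maps $f_{\theta}$ and $T_{\theta}$ exhibits the ergodic components of $\mathbf{U}$ as ergodic processes in $\mathcal{Q}_{\theta}\subseteq\mathcal{Q}_{\mathcal{D}}$; affineness of process entropy then gives $h_{\mathrm{proc}}(\mathbf{U})\le\sup$ over its ergodic components, so the supremum over all of $\mathcal{Q}_{\mathcal{D}}$ equals the supremum over its ergodic members. Combining the three displays yields $h_{\mathrm{fam}}(\mathcal{D})=h_{\mathrm{top}}(T)=\sup_{\mathbf{U}\in\mathcal{Q}_{\mathcal{D}}}h_{\mathrm{proc}}(\mathbf{U})$, as claimed.

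I expect the main obstacle to be the bookkeeping between the three distinct notions of entropy rather than any single hard estimate: namely, verifying that the topological entropy of the extended system $(\mathcal{Z},T)$ coincides with the family entropy (handling the inert $\theta$-coordinate) and that the measure-theoretic entropy of the shift on $\mathcal{U}_{\mathcal{D}}$ coincides with the process entropy $h_{\mathrm{proc}}$. Once these identifications are in hand, the result is a direct application of the variational principle together with the measure-to-process correspondence supplied by Lemma \ref{Lemma:BigShift}.
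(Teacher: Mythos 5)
Your argument is correct and follows essentially the same route as the paper: both realize $h_{\mathrm{fam}}(\mathcal{D})$ as the topological entropy of the skew-product system $(\mathcal{Z},T)$ and then invoke the variational principle together with the classification of ergodic measures from Lemma \ref{Lemma:BigShift}. The only substantive difference is how the chain of inequalities is closed: you use the product rule $h_{\mathrm{top}}(\Id_\Theta\times\tau)=h_{\mathrm{top}}(\tau)$ together with ergodic decomposition and affineness of process entropy, whereas the paper applies the variational principle a second time to each fiber $(\mathcal{Z}_\theta,T|_\theta)$ and chains the resulting inequalities, thereby avoiding any explicit ergodic decomposition of non-ergodic processes.
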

\begin{proof}
Consider the dynamical system $(\mathcal{Z},T)$ associated to $\mathcal{D}$ in Section \ref{Sect:Connections}, and let $\mathcal{U}$ be the set of sequences of the form $(f_{\theta} \circ T_{\theta}^k(x))_{k \geq 0}$ for any $\theta \in \Theta$ and $x \in \cX$. By Lemma \ref{Lemma:BigShift}, the standard variational principle for topological dynamical systems may be applied, and therefore
\begin{equation*} 
h_{\mathrm{top}}(T) = \sup_{\mu \in \mathcal{M}(\mathcal{Z},T)} h_T(\mu),
\end{equation*}
where the supremum may be taken over ergodic measures $\mu$.

For $\theta$ in $\Theta$, let  $\mathcal{Z}_{\theta} = \proj_{\Theta}^{-1}(\theta)$, which is closed and invariant under $T$. Also, let $T|_{\theta}$ denote the restriction of $T$ to $\mathcal{Z}_{\theta}$.

For any scale $\delta >0$, we have $N(\mathcal{U},\delta,d_{n,\infty}) \leq N(\mathcal{Z},\delta,d_{n,\infty})$, and therefore $h_{\mathrm{fam}}(\mathcal{D}) \leq h_{\mathrm{top}}(T)$. Similarly, we have that $h_{\mathrm{top}}(T|_{\theta}) \leq h_{\mathrm{fam}}(\mathcal{D})$.

By Lemma \ref{Lemma:BigShift}, for any ergodic $\mu \in  \mathcal{M}(\mathcal{Z},T)$, there exists $\theta$ such that $\mu = \delta_{\theta} \otimes \nu$, where $\nu$ is the distribution of a process $\mathbf{U} \in \mathcal{Q}_{\theta}$.  Then $h_{T}(\mu) = h_{\mathrm{proc}}(\mathbf{U})$, and we may conclude that
\begin{equation*}
\sup_{\mu \in \mathcal{M}(\mathcal{Z},T)} h_T(\mu) \leq \sup_{\theta} \sup_{\mathbf{U} \in \mathcal{Q}_{\theta}} h_{\mathrm{proc}}(\mathbf{U}).
\end{equation*}
Furthermore, for any $\theta \in \Theta$, an application of the standard variational principle to the system 
$(\mathcal{Z}_{\theta}, T|_{\theta})$ gives that
\begin{equation*}
h_{\mathrm{top}}(T|_{\theta}) = \sup_{ \mu \in \mathcal{M}(\mathcal{Z}_{\theta},T|_{\theta})} h_{T|_{\theta}}(\mu) = \sup_{\mathbf{U} \in \mathcal{Q}_{\theta}} h_{\mathrm{proc}}(\mathbf{U}),
\end{equation*}
where the second equality follows from the equivalence between processes in $\mathcal{Q}_{\theta}$ and measures in  $\mathcal{M}(\mathcal{Z}_{\theta},T|_{\theta})$.
Combining all of the statements above, we see that
\begin{equation*}
 h_{\mathrm{top}}(T) = \sup_{\mu \in \mathcal{M}(\mathcal{Z},T)} h_T(\mu) \leq \sup_{\mathbf{U} \in \mathcal{Q}_{\mathcal{D}}} h_{\mathrm{proc}}(\mathbf{U}) \leq h_{\mathrm{fam}}(\mathcal{D}) \leq h_{\mathrm{top}}(T),
\end{equation*}
which concludes the proof of the lemma.
\end{proof}

\subsection{Proof of Theorem \ref{Thm:OurDisjointness}} \label{Sect:OurDisjointnessProof}

Here we provide the proof of Theorem \ref{Thm:OurDisjointness}, which is really an adaptation of Furstenberg's original proof of Theorem \ref{Thm:Disjointness}. We begin with a lemma, which reduces the proof to the case of finite-valued processes.

\begin{lemma}
Suppose Theorem \ref{Thm:OurDisjointness} holds for finite-valued processes. Then it holds in general.
\end{lemma}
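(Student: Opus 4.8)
The plan is to reduce the general statement to the finite-valued case by approximating each of the three processes by finite-valued processes obtained through coordinatewise coding, applying the finite-valued case of Theorem~\ref{Thm:OurDisjointness} at every level of approximation, and then passing to the limit using the fact that independence of an increasing family of sub-$\sigma$-algebras is inherited by the $\sigma$-algebras they generate. Since independence of $[\mathbf{U},\mathbf{V}]$ and $\mathbf{W}$ is equivalent to the factorization $\mathbb{P}(A \cap B) = \mathbb{P}(A)\,\mathbb{P}(B)$ on generating $\pi$-systems for $\sigma(U_k,V_k : k\ge 0)$ and $\sigma(W_k : k\ge 0)$, it suffices to verify this factorization on events built from finitely many coordinates after coding.

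First I would fix the value spaces $\mathcal{U}$, $\mathcal{V}$, $\mathcal{W}$ of $\mathbf{U}$, $\mathbf{V}$, $\mathbf{W}$, which are separable and completely metrizable, hence second countable. For each I would choose an increasing sequence of finite Borel partitions $\pi_1^{(m)}, \pi_2^{(m)}, \pi_3^{(m)}$ ($m \ge 1$) whose union generates the Borel $\sigma$-algebra, and for a partition $\pi$ I would write $\pi(\mathbf{U}) = (\pi(U_k))_{k \ge 0}$ for the finite-valued coded process. The key observation is that the coded processes inherit the hypotheses: because coding commutes with the shift, $\pi_1^{(m)}(\mathbf{U})$ is a factor of $\mathbf{U}$ and one has $h\bigl(\pi_1^{(m)}(\mathbf{U})\bigr) = h(\mathbf{U}, \pi_1^{(m)}) \le h(\mathbf{U}) = 0$, so the coded $\mathbf{U}$-process again has zero entropy; $\pi_2^{(m)}(\mathbf{V})$ is a factor of the ergodic process $\mathbf{V}$, hence ergodic; and $\pi_3^{(m)}(\mathbf{W})$ is a coordinatewise function of an i.i.d.\ process, hence i.i.d. Pushing the original joining forward produces a stationary joining $[\pi_1^{(m)}(\mathbf{U}), \pi_2^{(m)}(\mathbf{V}), \pi_3^{(m)}(\mathbf{W})]$ in which $\pi_2^{(m)}(\mathbf{V})$ and $\pi_3^{(m)}(\mathbf{W})$ are independent (being functions of the independent $\mathbf{V}$ and $\mathbf{W}$). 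The finite-valued case then yields, for every $m$, that $[\pi_1^{(m)}(\mathbf{U}), \pi_2^{(m)}(\mathbf{V})]$ is independent of $\pi_3^{(m)}(\mathbf{W})$.

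Finally I would pass to the limit. Setting $\mathcal{A}_m = \sigma\bigl(\pi_1^{(m)}(U_k), \pi_2^{(m)}(V_k) : k \ge 0\bigr)$ and $\mathcal{B}_m = \sigma\bigl(\pi_3^{(m)}(W_k) : k \ge 0\bigr)$, the refining and generating choice of partitions gives $\mathcal{A}_m \uparrow \sigma(U_k, V_k : k \ge 0)$ and $\mathcal{B}_m \uparrow \sigma(W_k : k \ge 0)$, while each pair $(\mathcal{A}_m, \mathcal{B}_m)$ is independent by the previous step. Since $\bigcup_m \mathcal{A}_m$ and $\bigcup_m \mathcal{B}_m$ are increasing unions of $\sigma$-algebras, hence algebras and in particular $\pi$-systems, any two events drawn from them lie in a common $(\mathcal{A}_m, \mathcal{B}_m)$ for large $m$, so the factorization holds on these $\pi$-systems; the standard criterion that independence of generating $\pi$-systems implies independence of the generated $\sigma$-algebras then gives that $[\mathbf{U}, \mathbf{V}]$ is independent of $\mathbf{W}$. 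The routine points (measurability and stationarity of the codings, the $\pi$-system limiting argument) are standard; the only step demanding genuine care is the inheritance of the hypotheses under coding, and in particular the non-increase of entropy under the factor map $\mathbf{U} \mapsto \pi_1^{(m)}(\mathbf{U})$, since this is exactly what permits the zero-entropy hypothesis to be invoked at the finite-valued level.
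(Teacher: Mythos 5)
Your proposal is correct and follows essentially the same route as the paper: code each process by finite partitions, observe that the coding preserves zero entropy, ergodicity, the i.i.d.\ property, and the independence of $\mathbf{V}$ and $\mathbf{W}$, apply the finite-valued case, and conclude via a generating-class argument. The only cosmetic difference is that the paper builds the partitions from the sets defining each fixed cylinder event and verifies the factorization cylinder by cylinder, whereas you fix one increasing generating sequence of partitions and pass to the limit over $m$ with a $\pi$-system argument; both are valid.
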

\begin{proof}
Suppose $\mathbf{U}$ takes values in the measurable space $(\mathcal{U},\mathcal{F}_1)$, $\mathbf{V}$ in $(\mathcal{V},\mathcal{F}_2)$, and $\mathbf{W}$ in $(\mathcal{W},\mathcal{F}_3)$. Then the product $\sigma$-algebra on $\mathcal{U}^{\N} \times \mathcal{V}^{\N} \times \mathcal{W}^{\N}$ is generated by sets of the form $(A_1 \times \dots \times A_N) \times (B_1 \times \dots \times B_N) \times (C_1 \times \dots \times C_N)$, where $A_i \in \mathcal{F}_1$, $B_i \in \mathcal{F}_2$, and $C_i \in \mathcal{F}_3$. Fix $N$ and such $A_1,\dots,A_N$, $B_1,\dots, B_N$, and $C_1,\dots,C_N$. Let $\pi_1 = \bigvee_i \{A_i,A_i^c\}$, $\pi_2 = \bigvee_i \{ B_i, B_i^c\}$, and $\pi_3 = \bigvee_i \{C_i,C_i^c\}$, where $\bigvee$ denotes the join. Now consider the finite-valued processes $\pi_1(\mathbf{U})$, $\pi_2(\mathbf{V})$, and $\pi_3(\mathbf{W})$. Note that $\pi_1(\mathbf{U})$ has zero entropy, $\pi_2(\mathbf{V})$ is stationary and ergodic, and $\pi_3(\mathbf{W})$ is i.i.d. Furthermore, $\pi_2(\mathbf{V})$ is independent of $\pi_3(\mathbf{W})$. Hence, by Theorem \ref{Thm:OurDisjointness} for finite-valued processes gives that the joint process $(\pi_1(\mathbf{U}),\pi_2(\mathbf{V}))$ is independent of $\pi_3(\mathbf{W})$. Then
\begin{align*}
& \mathbb{P} \Bigl(  \mathbf{U} \in A_1 \times \dots \times A_N, \mathbf{V} \in B_1 \times \dots \times B_N, \mathbf{W} \in C_1 \times \dots \times C_N \Bigr) \\
& = \mathbb{P} \Bigl( \mathbf{U} \in  A_1 \times \dots \times A_N, \mathbf{V} \in B_1 \times \dots \times B_N \Bigr) \mathbb{P} \Bigl( \mathbf{W} \in C_1 \times \dots \times C_N \Bigr).
\end{align*}
Since $N$ and these sets were arbitrary, we see that $[\mathbf{U},\mathbf{V}]$ is independent of $\mathbf{W}$.
\end{proof}

Now, for the proof of Theorem \ref{Thm:OurDisjointness}, assume that all three processes are finite valued. In the rest of this proof, we use standard properties of entropy and stationary ergodic processes  (see \cite{Cover2012,Glasner2003}). First, notice that
\begin{align*}
h(\mathbf{V}) + H(W_1) & = h( [\mathbf{V},\mathbf{W}] ) \\
 & \leq h( [\mathbf{U},\mathbf{V},\mathbf{W}] )  \\
 & \leq h(\mathbf{U}) + h( [\mathbf{V},\mathbf{W}] )  \\
 & = h(\mathbf{V}) + H(W_1),
\end{align*}
where the two equalities rely on our hypotheses: $h(\mathbf{U}) = 0$, $[\mathbf{V},\mathbf{W}]$ is the independent joining, and $\mathbf{W}$ is i.i.d. Hence $h(\mathbf{V})+H(W_1) = h( [\mathbf{U},\mathbf{V},\mathbf{W}] ) $.
Then by conditioning and using the information cocycle equation \cite[p. 255]{Glasner2003}, we have
\begin{align*}
h(\mathbf{V}) & + H(W_1) \\
& = h([\mathbf{U},\mathbf{V}, \mathbf{W}]) \\
& = H( U_1, V_1, W_1 \mid U_2, V_2, W_2 , \dots) \\
& = H(U_1,V_1 \mid U_2, V_2, W_2 , \dots) + H(W_1 \mid U_1, V_1, U_2, V_2, W_2, \dots) \\
& \leq H(U_1 \mid U_2, V_2, W_2 , \dots) + H( V_1 \mid U_2, V_2, W_2 , \dots) \\
& \quad \quad \quad \quad \quad \quad \quad \quad \quad \quad \quad \quad + H(W_1 \mid U_1, V_1, U_2, V_2, W_2, \dots) \\
& \leq h(\mathbf{U}) + h(\mathbf{V}) + H(W_1 \mid U_1, V_1, U_2, V_2, W_2, \dots) \\
& = h(\mathbf{V}) + H(W_1 \mid U_1, V_1, U_2, V_2, W_2, \dots) \\
& \leq h(\mathbf{V}) + H(W_1).
\end{align*}
Since $\mathbf{V}$ is finite-valued, $h(\mathbf{V})$ is finite. Then the previous display yields
\begin{equation*}
H(W_1 \mid U_1, V_1, U_2, V_2, W_2, \dots) = H(W_1),
\end{equation*}
which implies that $W_1$ is independent of the $\sigma$-algebra generated by the random variables $\{U_1,V_1,U_2,V_2,\dots\}$. For arbitrary $r \in \N$, repetition of this argument for the variables $(W_1,\dots,W_r)$, $(W_{r+1},\dots,W_{2r})$, $\dots$, shows that the entire process $\mathbf{W}$ is independent of $\{U_1,V_1,U_2,V_2,\dots\}$. This concludes the proof of the theorem.

\end{document}